\pgfplotsset{compat=1.18}
\newtheorem{theorem}{Theorem}[section]
\newtheorem{proposition}[theorem]{Proposition}
\newtheorem{lemma}[theorem]{Lemma} 
\newtheorem{remark}[theorem]{Remark}
\newcommand{\AAA}{{\mathsf{A}}}
\newcommand{\WW}{\widetilde{\mathsf{W}}}
\newcommand{\xx}{\mathbf{x}}
\newcommand{\xxg}{\mathbf{x}_\gamma}
\newcommand{\yy}{\mathbf{y}}
\newcommand{\bb}{\mathbf{b}}
\newcommand{\zz}{\mathbf{z}}
\newcommand{\Rnlocal}{\mathbb{R}^n}
\newcommand{\DD}{\mathcal{D}}
\newcommand{\Dw}{\mathcal{D}_w}
\newcommand{\PP}{\mathsf{P}}
\newcommand{\cc}{{\mathbf{c}}}
\newcommand{\ee}{{\mathbf{e}}}
\newcommand{{\oTV}}{\overline{TV}}
\newcommand{\nullspace}[1]{{\mathcal{N}(#1)}}
\newcommand{\ws}{\widetilde{w}}
\newcommand{\Mu}{\mathcal{U}}
\newcommand{\anti}{\mathcal{L}^n}
\providecommand{\keywords}[1]{\textbf{\textit{Keywords:}} #1}
\DeclareMathOperator*{\argmax}{arg\,max}
\DeclareMathOperator*{\argmin}{arg\,min}
\title{Weighted total variation regularization for inverse problems with significant null spaces}
\author{Martin Burger\thanks{ Helmholtz Imaging, Deutsches Elektronen Synchroton DESY, Notkestr. 85, 22607 Hamburg, Germany, and Department of Mathematics, University of Hamburg, Bundesstr. 55, 20146 Hamburg, Germany.}, Ole L{\o}seth Elvetun\thanks{Faculty of Science and Technology, Norwegian University of Life Sciences. Email: ole.elvetun@nmbu.no.} and Bj{\o}rn Fredrik Nielsen\thanks{Faculty of Science and Technology, Norwegian University of Life Sciences. Email: bjorn.f.nielsen@nmbu.no.}}
\date{}
\begin{document}

\maketitle

\begin{abstract}
    We consider inverse problems with large null spaces, which arise in important applications such as in inverse ECG and EEG procedures. Standard regularization methods typically produce solutions in or near the orthogonal complement of the forward operator's null space. This often leads to inadequate results, where internal sources are mistakenly interpreted as being near the data acquisition sites -- e.g., near or at the body surface in connection with EEG and ECG recordings.

    To mitigate this, we previously proposed weighting schemes for Tikhonov and sparsity regularization. Here, we extend this approach to total variation (TV) regularization, which is particularly suited for identifying spatially extended regions with approximately constant values. We introduce a weighted TV-regularization method, provide supporting analysis, and demonstrate its performance through numerical experiments. Unlike standard TV regularization, the weighted version successfully recovers the location and size of large, piecewise constant sources away from the boundary, though not their exact shape.

    Additionally, we explore a hybrid weighted-sparsity and TV regularization approach, which better captures both small and large sources, albeit with somewhat more blurred reconstructions than the weighted TV method alone.
\end{abstract}

\noindent \keywords{Weighted total variation regularization, null space, ECG, EEG, hybrid weighted-sparsity and total variation.}

\section{Introduction}
We will consider the problem 
\begin{equation*}
    \min_{f\in BV(\Omega)} \left\{ \frac{1}{2}\|Kf - d\|^2_{L^2(E)} + \alpha TV_w(f) \right\},
\end{equation*}
where $K: L^2(\Omega) \rightarrow L^2(E)$ is a linear and non-injective compact operator, $\Omega \subset \mathbb{R}^n$, $n=1,2$, and $E$ is the observation domain. The weighted regularization functional $TV_w: BV(\Omega) \rightarrow \mathbb{R}$ is defined by
\begin{align*}
    TV_w(f) &= \int_\Omega | \bm{\omega}(x)\nabla f(x)|_1 dx \\
    &= \int_\Omega w_1(x) |\partial_{x_1} f(x)| +w_2(x) |\partial_{x_2} f(x)|dx, 
\end{align*}
provided that $f \in W^{1,1}(\Omega)$. For any $f \in BV(\Omega) \setminus W^{1,1}(\Omega)$, the definition relies on the well-known dual formulation of the TV-functional presented in Section \ref{sec:preliminaries}. Note that we employ the anisotropic version of TV and that we assume that $\bm{w}(x)$ is a diagonal weight matrix with non-negative and measurable functions $w_1(x)$ and $w_2(x)$ at the diagonal. Their precise form is discussed below. 

This work is motivated by the need, in many applications, to estimate the source term $f$ in an equation in the form  
\begin{equation} \label{intro:main_equation}
    Kf=d. 
\end{equation}
Assuming that $f$ has a "blocky" structure, one would typically want to apply TV-regularization to this problem because this technique has proven to handle deblurring tasks very well, see, e.g., \cite{benning2018modern,BrediesHoller2020}. 

For standard imagination problems, $K$  equals the identity operator $I$, and one can employ the uniform weights $w_1(x)=w_2(x)=1, \, x \in \Omega,$ without introducing any further\footnote{TV-regularization yields a "blocky" biased, i.e., a preference for reconstructions with small total variation.} bias. On the other hand, when $K$ has a significant null space, the textbook form of TV-regularization might produce results of rather low quality: Figure \ref{fig:motivation} shows computations undertaken with $K=T (-\Delta+I)^{-1}$, where $\Delta$ represents the Laplace operator, $I$ is the identity, $T:H^1(\Omega) \rightarrow L^2(\partial \Omega)$ is the trace operator and $\Omega = (0,1) \times (0,1)$. That is, we seek to use boundary data $d \in L^2(\partial \Omega)$ to recover the source term $f$ in the following equations  
\begin{eqnarray*}
    -\Delta u + u &=& f, \quad x \in \Omega, \\ 
    \nabla u \cdot \mathbf{n} &=&0, \quad x \in \partial\Omega. 
\end{eqnarray*}
We refer to pages 159--161 in \cite{burger2013inverse} for a discussion of why standard regularization schemes fail to handle this type of problem adequately. 

In this paper, we define the weight functions $w_1(x)$ and $w_2(x)$ in terms of the forward mapping and the (partial) derivatives of the Green's function of a suitable differential operator. This enables the recovery in 1D of single-jump-sources, i.e., shifted Heaviside functions, from boundary data, which is proven in Section \ref{sec:1D_analysis}. The ideas are extended to 2D problems in Section \ref{sec:2D3D_analysis}, including some theory. 
Section \ref{sec:numerical_experiments} contains a series of numerical experiments, which reveal that the weighted approach yields significantly better results than the standard TV scheme. In fact, the position and size of the source are estimated rather well, but its shape cannot be recovered, unless some rather strict conditions are satisfied.      

Section \ref{sec:hybrid} contains a discussion and analysis of a hybrid weighted-sparsity and TV regularization approach, which turns out to be better suited at identifying both small and larger sources. Moreover, its analysis is transparent, leading to short proofs of its approximate reconstruction capabilities. However, it yields somewhat more blurred recoveries than the (pure) weighted TV methodology. 

The present text may be considered as a follow-up work to our earlier investigations of similar issues for Tikhonov and sparsity regularization \cite{Elv21,Elv21c,Elv22,Elv24}. Nevertheless, it turns out that not only the implementation of the weighted versions of the former methods is easier than the numerical treatment of weighted TV, but also their analysis is more accessible. This is due to the fact that Tikhonov and sparsity regularization can be employed to identity small- and medium-sized local sources, whereas the TV approach typically is studied in terms of its capacity to detect borders between larger subregions. 

There are many papers discussing and analyzing various weighting procedures in connection with total variation regularization, covering theory, numerics, and applications, e.g., \cite{bui2021weighted,hintermuller2017optimal_I,hintermuller2017optimal,li2021novel,rahiman2020multi,sheng2024weighted}. Nevertheless, as far as the authors know, this is the first text focusing on how to improve the performance of TV schemes when applied to inverse problems with significant null spaces. 


\begin{figure} 
    \centering
    \begin{subfigure}[t]{0.35\linewidth}        
        \centering
        \includegraphics[trim=40 30 40 30, clip, width=\linewidth]{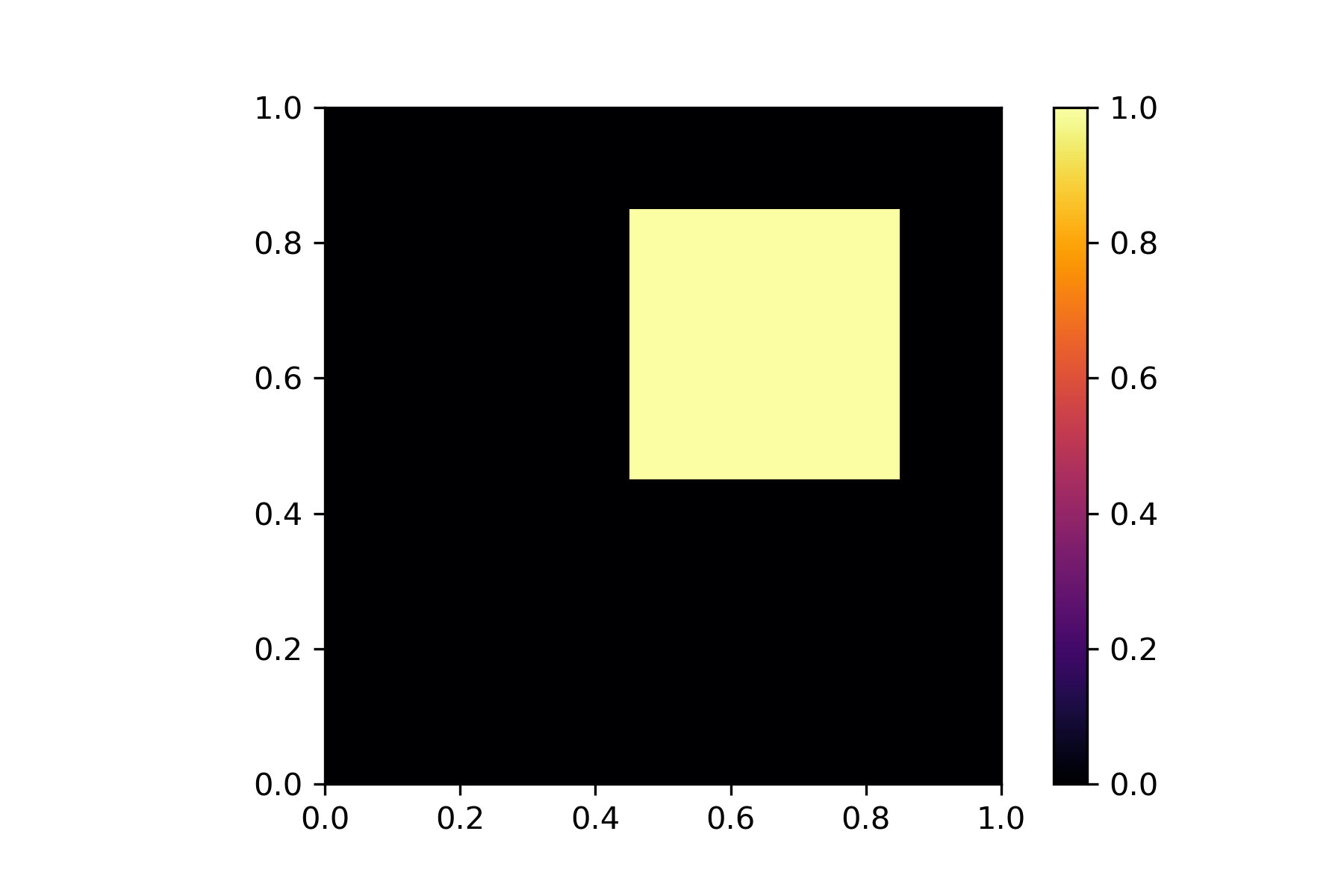}
        \caption{True source}
    \end{subfigure}
    \begin{subfigure}[t]{0.35\linewidth}        
        \centering
        \includegraphics[trim=40 30 40 30, clip, width=\linewidth]{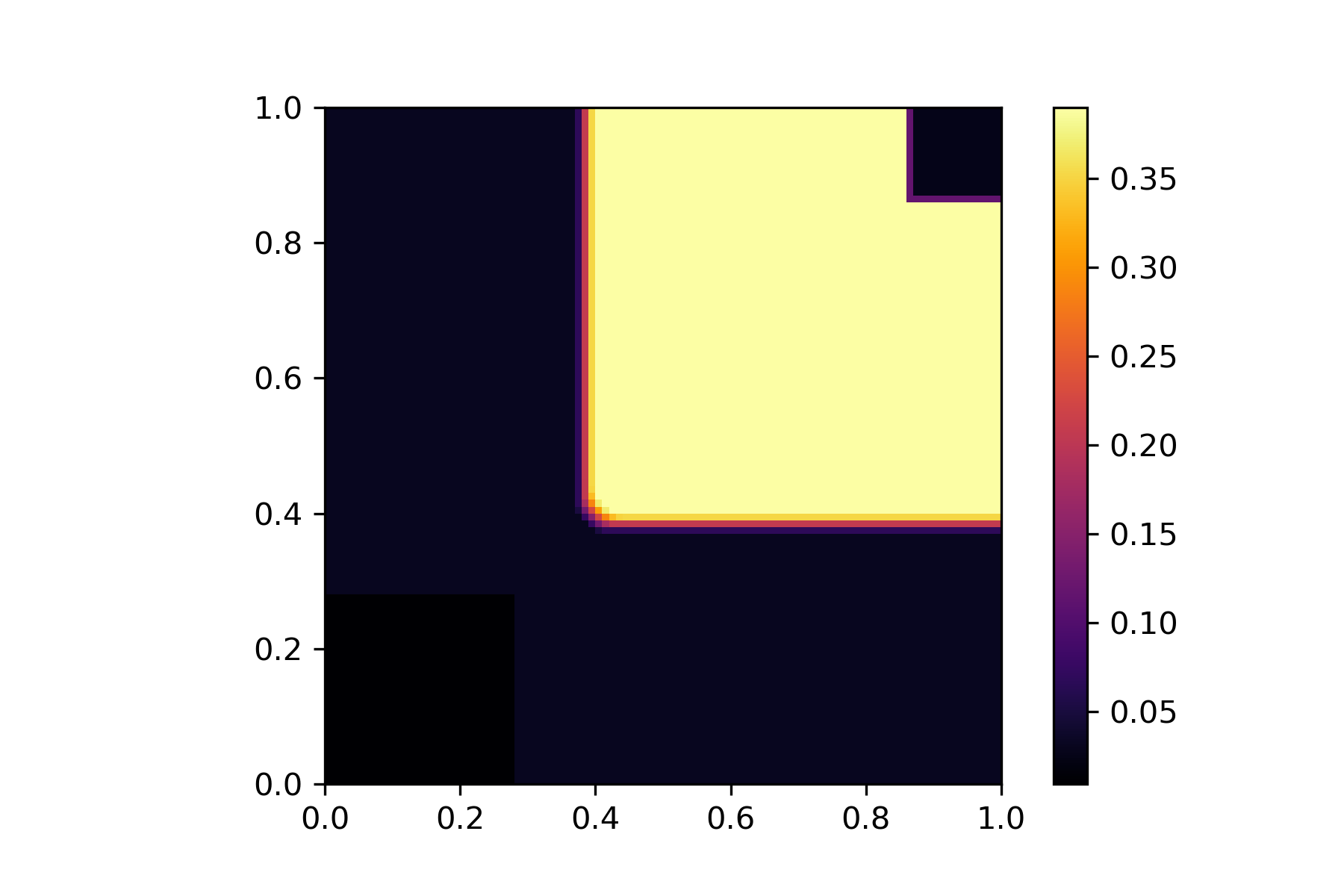}
        \caption{Reconstruction via unweighted TV}
    \end{subfigure}\par
    \begin{subfigure}[t]{0.35\linewidth}        
        \centering
        \includegraphics[trim=40 30 40 30, clip, width=\linewidth]{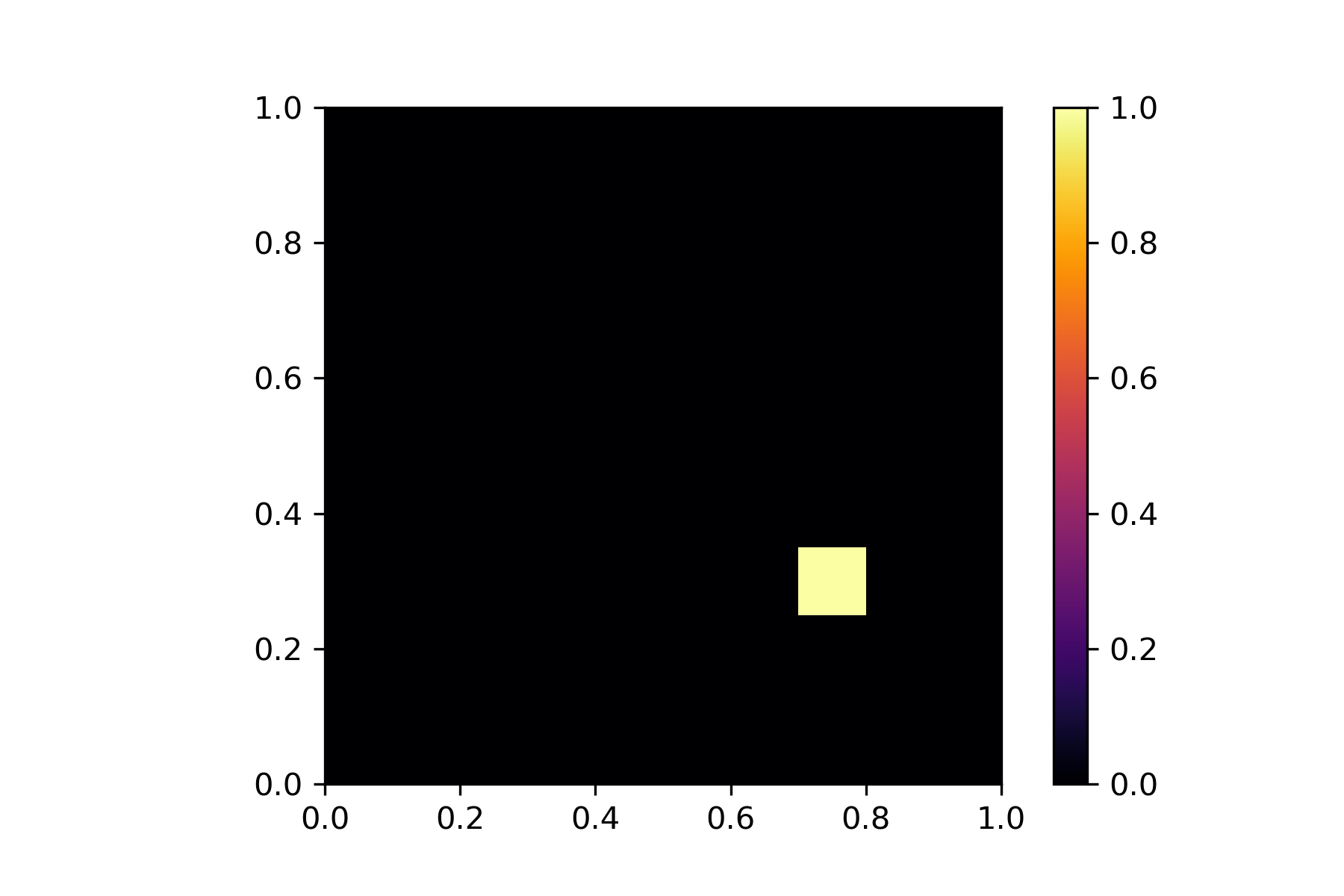}
        \caption{True source}
    \end{subfigure}
    \begin{subfigure}[t]{0.35\linewidth}        
        \centering
        \includegraphics[trim=40 30 40 30, clip, width=\linewidth]{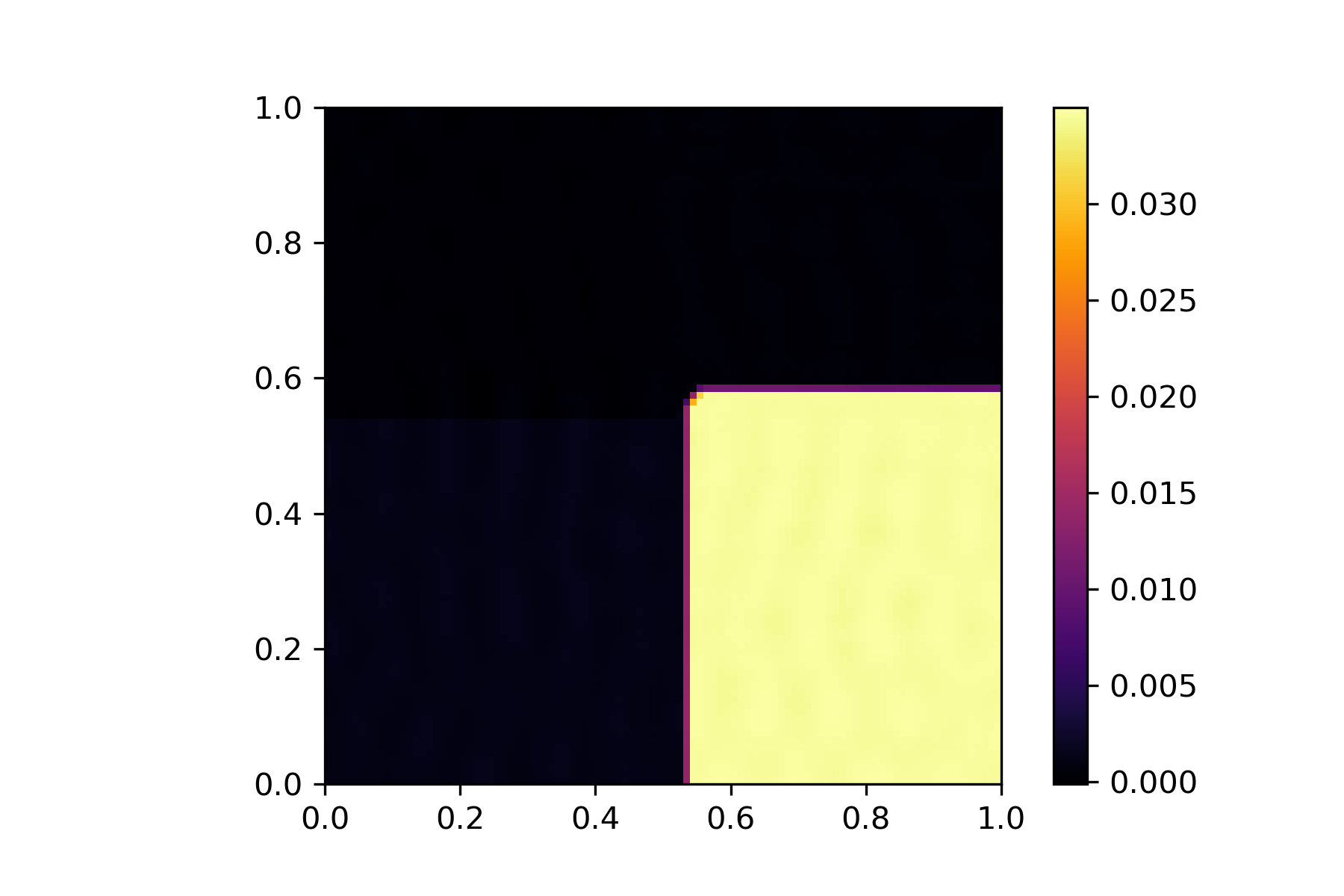}
        \caption{Reconstruction via unweighted TV}
    \end{subfigure}\par    
    \begin{subfigure}[t]{0.35\linewidth}        
        \centering
        \includegraphics[trim=40 30 40 30, clip, width=\linewidth]{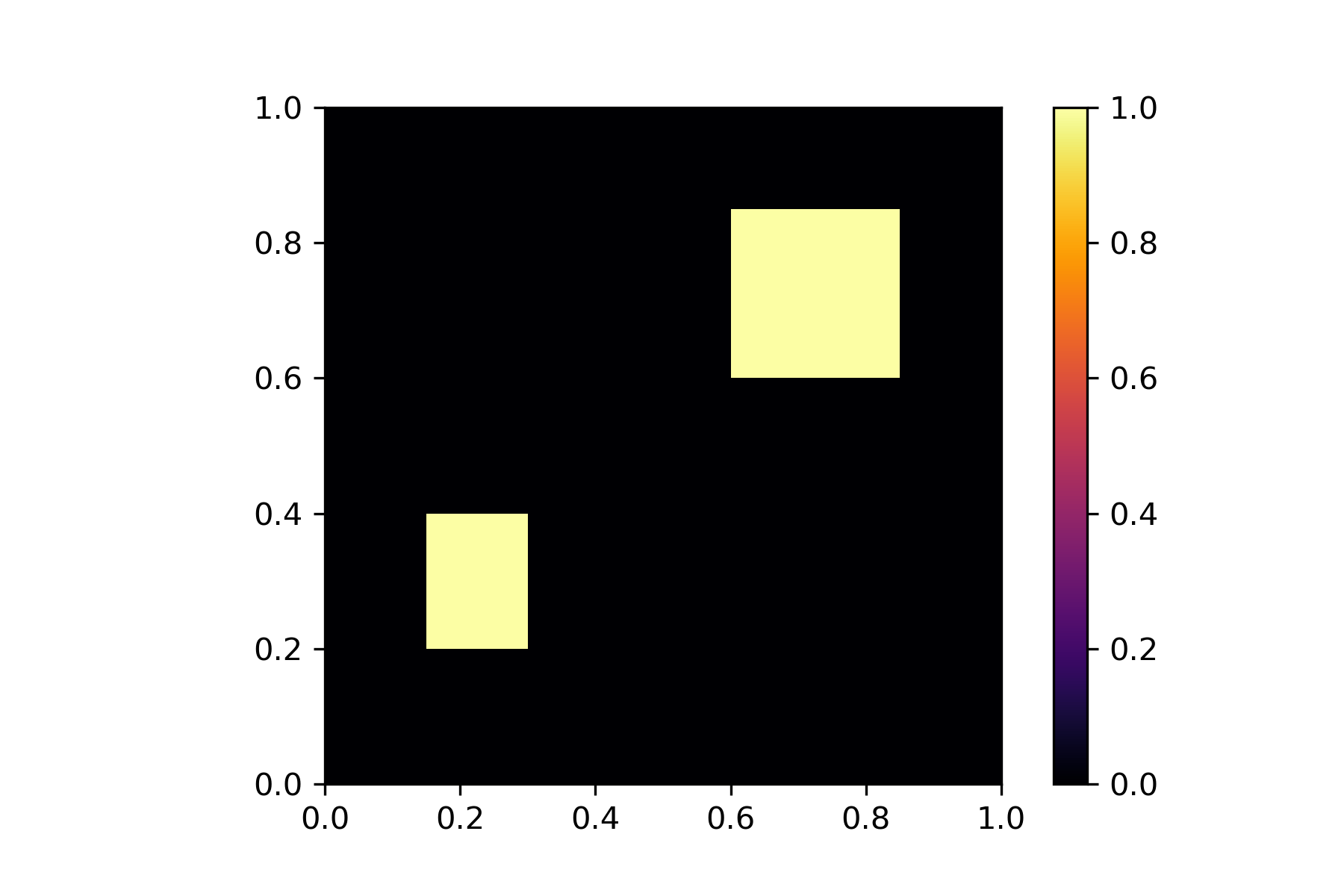}
        \caption{True source}
    \end{subfigure}
    \begin{subfigure}[t]{0.35\linewidth}        
        \centering
        \includegraphics[trim=40 30 40 30, clip, width=\linewidth]{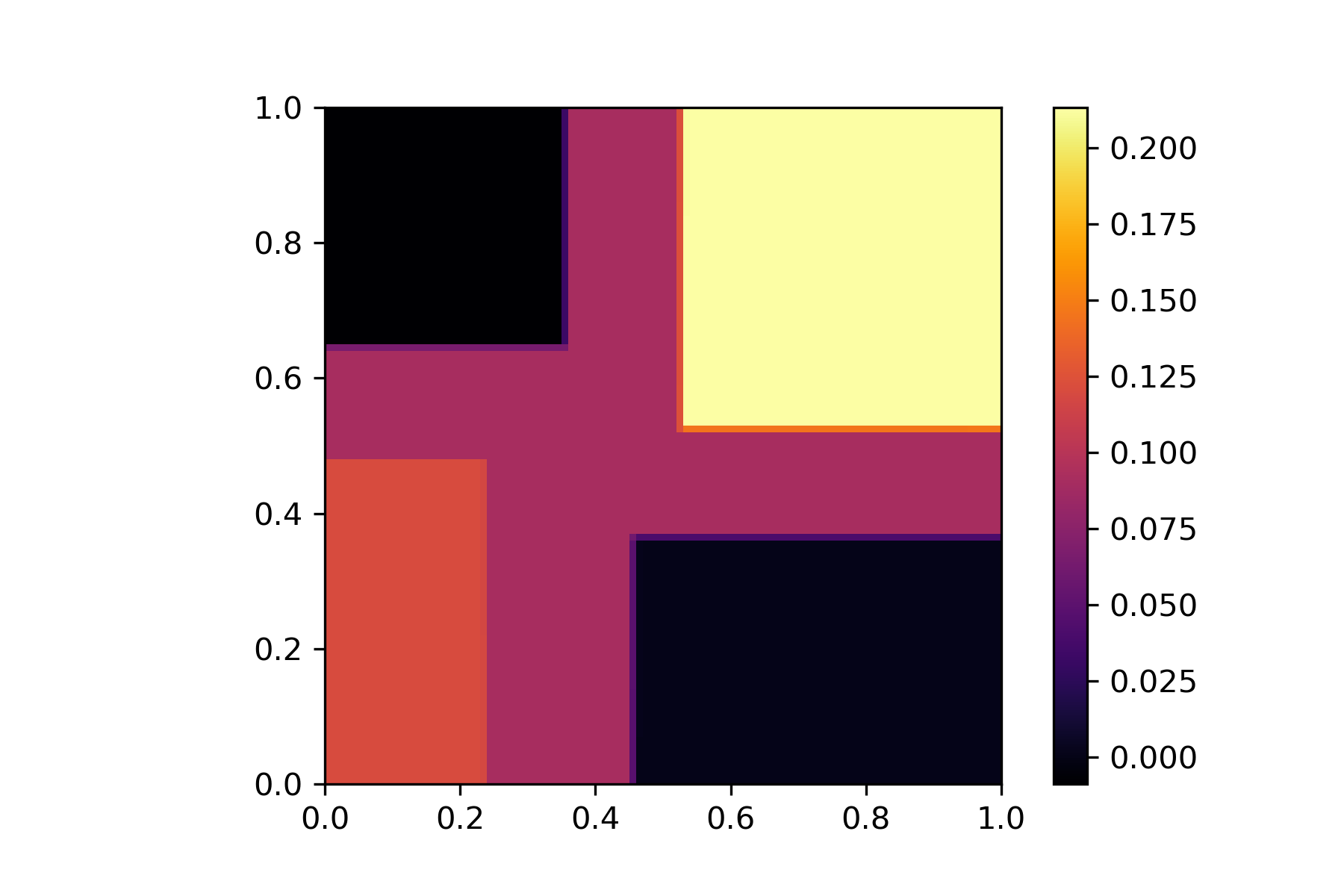}
        \caption{Reconstruction via unweighted TV}
    \end{subfigure}\par
    \begin{subfigure}[t]{0.35\linewidth}        
        \centering
        \includegraphics[trim=40 30 40 30, clip, width=\linewidth]{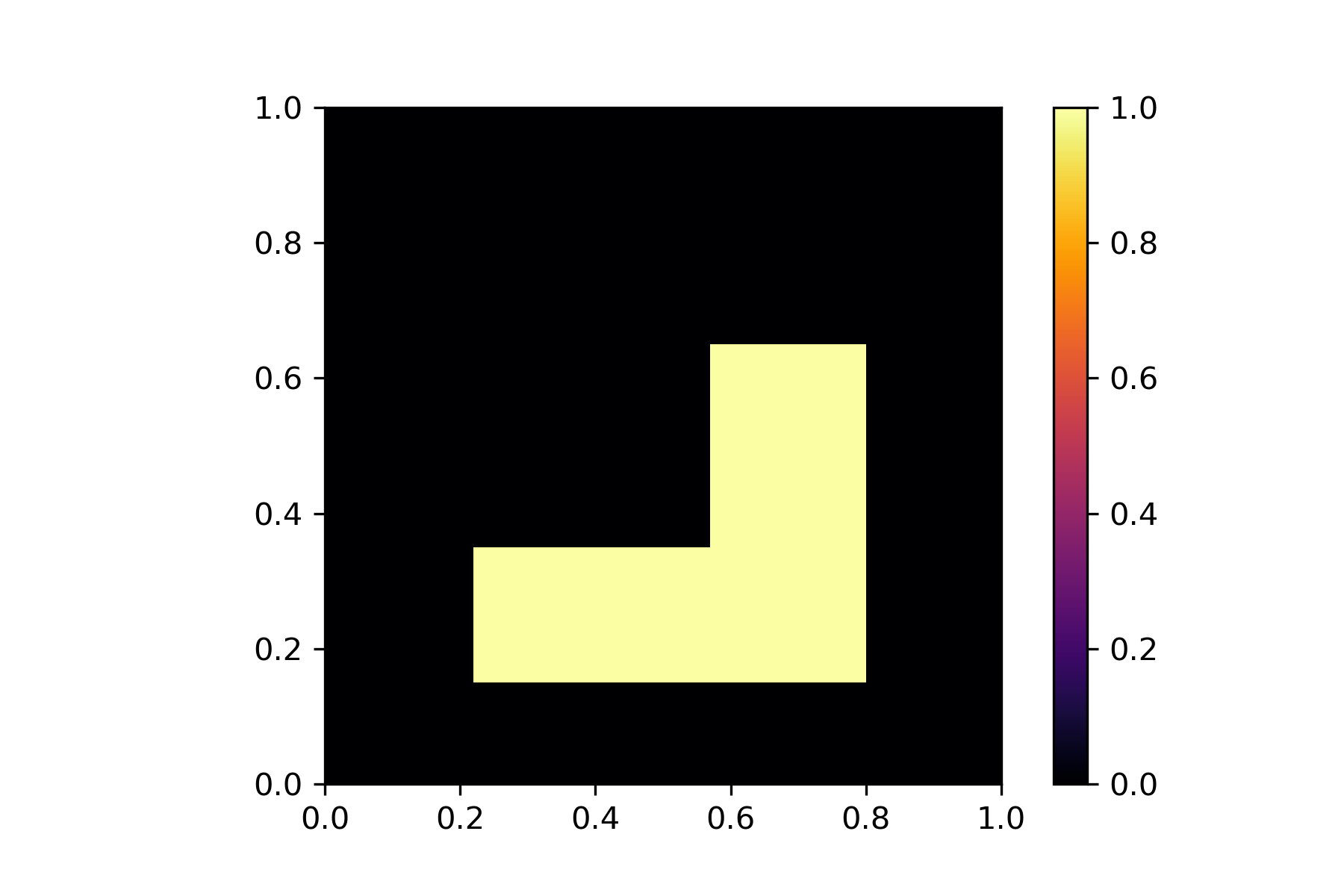}
        \caption{True source}
    \end{subfigure}
    \begin{subfigure}[t]{0.35\linewidth}        
        \centering
        \includegraphics[trim=40 30 40 30, clip, width=\linewidth]{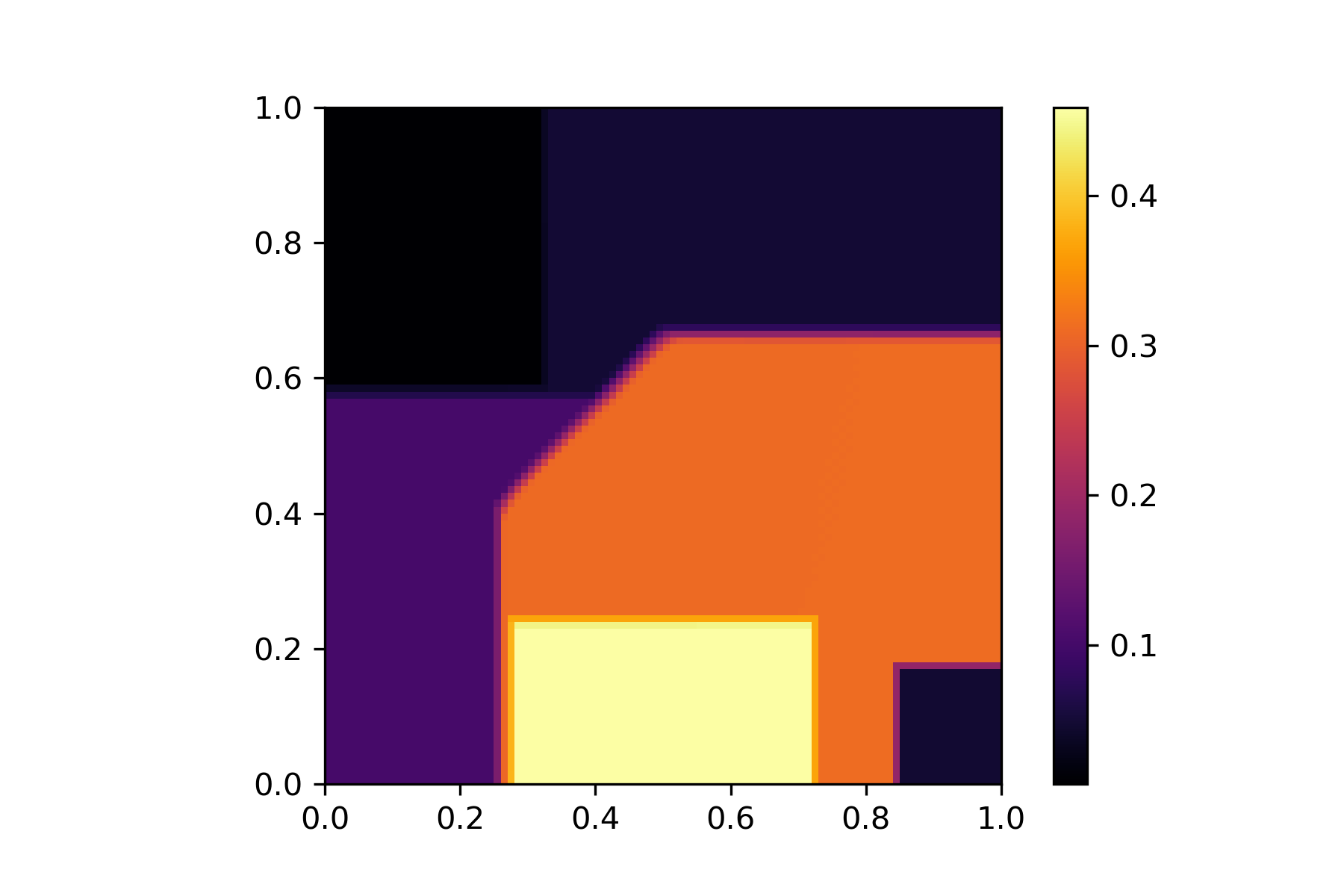}
        \caption{Reconstruction via unweighted TV}
    \end{subfigure}\par    
    \caption{True sources and inverse recoveries computed with standard total variation regularization. No noise was added to the synthetic boundary data $d=K f_{\textnormal{true}}$ and $\alpha = 10^{-6}$.}
    \label{fig:motivation}
\end{figure}

\section{Two versions of weighted TV-regularization} \label{sec:two_weights}
As mentioned above, our weights are defined in terms of the images under the forward operator $K$ of the partial derivatives of suitable Green's functions. To be more concrete, let us consider the Green's functions for the  Laplace operator, using homogeneous Dirichlet or Neumann boundary conditions:
\[
\begin{minipage}{0.45\linewidth} 
\[
\underbrace{
\begin{aligned}
    -\Delta G^D &= \delta_x &&\text{in } \Omega, \\
    G^D &= 0, &&\text{on } \partial\Omega,
\end{aligned}}_{\textnormal{Green's problem w/ Dirichlet B.C.} }
\]
\end{minipage}
\hfill
\begin{minipage}{0.45\linewidth}
\begin{equation} \label{two_weights:Greens_Neumann}
\underbrace{
\begin{aligned} 
    -\Delta G^N &= \delta_x - \frac{1}{|\Omega|} &&\text{in } \Omega, \\
    \frac{\partial G^N}{\partial n} &= 0 &&\text{on } \partial\Omega. \\
\end{aligned}}_{\textnormal{Green's problem w/ Neumann B.C.}}
\end{equation}
\end{minipage}
\]
Here, $\delta_x(y)=\delta(y-x)$, where $\delta$ denotes the delta function/distribution. 

As will become evident below, a crucial step in defining our TV-weights is to express the source term $f$ in \eqref{intro:main_equation} in terms of $\nabla G^D$ or $\nabla G^N$: 
\begin{align}
    \nonumber
    f(x) &= \int_\Omega f(y)\delta_x(y) dy \\  
    \nonumber
    &= -\int_\Omega \Delta G^D(y;x)f(y)dy \\ 
    \label{two_weights:GD}
    &=\int_\Omega \nabla G^D(y;x)\cdot \nabla f(y) dy - \int_{\partial\Omega} f(y) \partial_{\mathbf{n}}G^D(y;x)dS(y),
\end{align}
or, similarly,
\begin{align}
    \nonumber
    f(x) &= \int_\Omega f(y)\delta_x(y) dy \\
    \nonumber
    &= -\int_\Omega \left(\Delta G^N(y;x)+\frac{1}{|\Omega|}\right)f(y)dy \\ 
    \label{two_weights:GN}
    &= \int_\Omega \nabla G^N(y;x)\cdot \nabla f(y) dy + \frac{1}{|\Omega|}\int_{\Omega}f(y)dy.
\end{align}

From \eqref{two_weights:GD} we find that 
\begin{equation} \label{two_weights_Kf_expression}
    (Kf)(z) = K \left( \int_\Omega \nabla G^D(y;\cdot)\cdot \nabla f(y) dy - \int_{\partial\Omega} f(y) \partial_{\mathbf{n}}G^D(y;\cdot)dS(y) \right) (z), 
\end{equation}
and similarily for \eqref{two_weights:GN}. This expression for $Kf$ motivates, at least to some extent, why suitable weights for TV-regularization might be defined in terms of the images of the partial derivatives of a Green's function: Note that \eqref{two_weights_Kf_expression} involves, potentially in a measure-theoretical sense, $\nabla f$ which is used in the standard definition of the total variation of $f$.  

Whether $f$ should be expressed in the form \eqref{two_weights:GD} or \eqref{two_weights:GN} in the derivation of the weights depends on the problem under consideration. To expand on this, consider Figure \ref{fig:penaltyCases}: If we are mainly searching for sources with a jump discontinuity stretching throughout the entire domain $\Omega$ (panel (a)), it turns out that it is beneficial to apply Neumann boundary conditions, as we do not wish to penalize the support of $f$ along the boundary $\partial \Omega$. If, on the other hand, we are searching for a source inside $\Omega$ (panel (b)), it is wise to keep the boundary integral, i.e., to employ \eqref{two_weights:GD}, in order to not make it erroneously "cheap" to position the source along $\partial \Omega$. 

In sections \ref{sec:1D_analysis} and \ref{sec:2D3D_analysis} we will use \eqref{two_weights:GN} and \eqref{two_weights:GD}, respectively. 

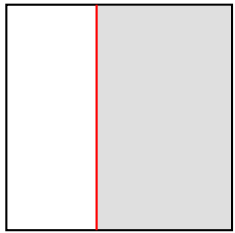
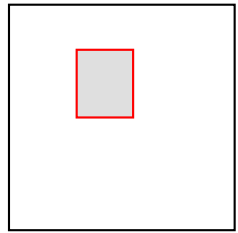
\begin{figure}[h!]
    \centering
    \begin{subfigure}[t]{0.45\linewidth} 
     \centering
      {\begin{tikzpicture}[scale=3]
        \draw[thick] (0,0) rectangle (1,1);
        \filldraw[gray!25, thick] (0.4,0.01) rectangle (0.99,0.99);
        \draw[red, thick] (0.4,0.0) -- (0.4,1);
      \end{tikzpicture}}
      \caption{Source with a jump discontinuity stretching throughout the entire domain.}
    \end{subfigure}
    \hfill
    \begin{subfigure}[t]{0.45\linewidth}
    \centering
      {\begin{tikzpicture}[scale=3]
        \draw[thick] (0,0) rectangle (1,1);
        \filldraw[gray!25, thick] (0.3,0.5) rectangle (0.55,0.8);
        \draw[red, thick] (0.3,0.5) rectangle (0.55,0.8);
      \end{tikzpicture}} 
      \caption{Source inside the domain (away from $\partial \Omega$).}
    \end{subfigure}
    \caption{A domain $\Omega$, with boundary $\partial \Omega$, and the support (in gray) of two prototypical sources.}
    \label{fig:penaltyCases}
\end{figure}

\section{Weighted total variation} \label{sec:preliminaries}

In the following, we provide the basic definitions of weighted total variation to be used throughout. 

\subsection{Weighted total variation}
Let us introduce the dual space notation  
$$\DD = \{\phi \in C^1_c(\Omega; \mathbb{R}^n): |\phi_i(x)|\leq 1 \ \forall x \in \Omega, \ i \in \{1,\ldots,n\}\},$$
and the weighted dual space 
\begin{equation} \label{def:D_w}
\Dw = \{\phi \in C^1_c(\Omega; \mathbb{R}^n): |\phi_i(x)|\leq w_i(x) \ \forall x \in \Omega, \ i \in \{1,\ldots,n\}\}.
\end{equation}
We can then define the TV-norm
\begin{equation*}
    TV(f) = \sup_{\phi\in\DD}\int_\Omega f(x) \ \textnormal{div}\ \phi(x) dx,
\end{equation*}
and similarly the weighted TV-norm
\begin{equation}
    TV_w(f) = \sup_{\phi\in\Dw}\int_\Omega f(x) \ \textnormal{div} \ \phi(x) dx.\label{eq:TVw}
\end{equation}

Invoking Riesz' representation theorem, we can define the bounded Radon measure $Df$ on $\Omega$, for any $f \in BV(\Omega)$, by
\begin{equation} \label{def:Radon_measure}
    \int_\Omega f(x) \ \textnormal{div} \ \phi(x) \ dx = -\int_\Omega \phi(x) \cdot Df(x), \quad \forall \phi \in C^1_c(\Omega;\mathbb{R}^n).
\end{equation}
It follows that the variation of this measure is precisely the total variation of the generating function $f \in BV(\Omega)$, i.e.,
\begin{equation*}
    \int_\Omega |Df(x)| = TV(f),
\end{equation*}
see, e.g., \cite{ziemer89} for details. Similarly, the weighted version can be written in the form  
\begin{align} \label{def:TVw2}
    TV_w(f) &= \sum_{i=1}^n \int_\Omega  w_i(x)|D_if(x)|,
\end{align}
where $D_if$ represent the i'th component of the vector-valued Radon measure $Df$.

\subsection{Weighted extended total variation}
We will now introduce a relaxed weighted dual space notation, where we do no longer assume compact support of the test functions. That is, we let
\begin{equation} \label{def:De_w}
\Dw^e = \{\phi \in C^1(\overline{\Omega}; \mathbb{R}^n): |\phi_i(x)|\leq w_i(x) \ \forall x \in \overline\Omega, \ i \in \{1,\ldots,n\}\}.
\end{equation}
We can then define the extended weighted TV-functional
\begin{equation}
    \overline{TV}_w(f) = \sup_{\phi\in\Dw^e}\int_\Omega f(x) \ \textnormal{div} \ \phi(x) dx.\label{eq:TVew}
\end{equation}
 Similarly to how we evaluated the TV-norm of $f$ via its Radon measure $Df$ on $\Omega$, we now get the relation, cf. \eqref{two_weights:GD}, 
\begin{align} \label{def:TVew2}
    \overline{TV}_w(f) &= \sum_{i=1}^n \int_\Omega  w_i(x)|D_if(x)| + \int_{\partial\Omega}w_\partial(x) |f(x)|dS(x),
\end{align}
where $w_\partial = \sum_i |n_i|w_i$ $(n_i$ is the i'th component of the outer unit normal vector of $\partial\Omega$).

\section{1D analysis} \label{sec:1D_analysis}
In this section we focus on the one dimensional case, i.e., $\Omega=(0,1)$. 
Let $H_{x^*}$ denote the $x^*$-shifted Heaviside function 
\begin{equation*}
    H_{x^*}(x) = H(x-x^*)
\end{equation*}
with associated shifted delta Dirac function $\delta_{x^*}$. 
We will now analyze the following "jump pursuit" problem
\begin{equation} \label{eq:jump_pursuit_problem}
    \min_{f \in BV(0,1)} TV_w(f) \textnormal{ subject to } K f=K (\rho H_{x^*} + \tau), 
\end{equation} 
where $\rho$ and $\tau$ are constants incorporating the hight of the jump and the "base line" of the jump function, respectively, and $w(x)$ is an appropriate weight function (defined below). 

We will prove that $\rho H_{x^*} + \tau$ solves \eqref{eq:jump_pursuit_problem}. This is a challenging issue in the present context because the involved forward operator $K:L^2(0,1) \rightarrow L^2(E)$ has a nontrivial null space, i.e., $E$ is typically a subset of $\overline{\Omega}$. Note that our analysis covers the boundary-observations-only case $E=\{0,1\}$. Moreover, our exploration is not limited to state equations defined in terms of elliptic PDEs, but rather holds for general linear maps $K$ with significant null spaces.     

As will become clear below, functions with zero integral play an important role in this section:
\begin{equation*}
    \bar{f}(x)=f(x)-\int_0^1 f(z) \, dz, 
\end{equation*}
and in particular  
\begin{equation}
    \bar{H}_y (x) = H_y(x) - \int_0^1 H_y(z) \, dz 
    = \left\{ 
    \begin{array}{cc}
       y-1,  & x<y, \\
       y,  &   x>y.
    \end{array}\right. \label{eq:heaviY} 
\end{equation}


The total variation of functions which only differ by a constant is the same. Therefore, the image under $K$ of the constant function $1$ plays an important role in the present context. More precisely, defining the orthogonal projection onto the orthogonal complement of the space spanned by $K1$, 
\begin{equation}\label{eq:ortho_proj}
    Q:L^2(E) \rightarrow \left\{ t K1 \, | \, t \in \mathbb{R} \right\}^\perp, 
\end{equation}
we define the weight function to be used in connection with TV regularization in 1D as follows
\begin{equation} \label{def:w}
    w(y) = \| C \bar{H}_y \|_{L^2(E)}, \quad y \in \Omega,     
\end{equation}
where 
\begin{equation} \label{def:C}
    C=QK. 
\end{equation}
Note that, $C d = 0$ for any constant (function) $d$. 

\subsubsection*{Remarks}
Recall the definition \eqref{eq:heaviY} of $\bar{H}_y (x)$. Consider the function 
\begin{equation*}
    G(y;x)=-\left\{ 
    \begin{array}{cc}
       \frac{1}{2} y^2 + \frac{1}{2} x^2 -y,  & x<y, \\
       \frac{1}{2} y^2 + \frac{1}{2} x^2 -x,  &   x>y,
    \end{array}\right.
\end{equation*}
which satisfies 
\begin{equation*}
    \frac{d}{dy} G(y;x) = - \bar{H}_y (x)
\end{equation*}
and 
\begin{equation*}
    \frac{d^2}{dy^2} G(y;x) = \delta_x(y)-1, 
\end{equation*}
cf. \eqref{two_weights:Greens_Neumann}, the version with homogeneous Neumann boundary conditions. This choice of a Green's function in the present situation is motivated by the fact that we want to recover a single-jump source $\rho H_{x^*} + \tau$, and this constitute the 1D version of the source depicted in panel (a) in Figure \ref{fig:penaltyCases}.   

We emphasize that, in this section, the weighting is not defined in terms of the original forward operator $K$, but we rather use the modified operator $C$. Note that any solution $f$ of \eqref{intro:main_equation} also satisfies $QK f = Q d$. The rationale behind this choice will become clear below.


\subsection{Jump pursuit}
We need two lemmas in order to prove the abovemention recovery result for \eqref{eq:jump_pursuit_problem}.

\begin{lemma} \label{lem:f_H_expression}
    Assume that $f \in BV(0,1)$ and let $\bar{H}_y$ be defined as in \eqref{eq:heaviY}. Then, recalling the definition \eqref{def:Radon_measure} of the Radon measure $Df$, we can write
    \begin{equation} \label{eq:1D_GN}
        f(x) = \int_0^1 \bar{H}_y(x)Df(y) \ + \eta \quad \textnormal{a.e.,} 
    \end{equation}  
    for $\eta$ a constant.
\end{lemma}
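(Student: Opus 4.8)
The plan is to prove the representation formula \eqref{eq:1D_GN} by a density/integration-by-parts argument, using the Green's function $G(y;x)$ exhibited in the Remarks as the key tool. For smooth $f$ (say $f \in C^1([0,1])$ or $f \in W^{1,1}(0,1)$) the identity should follow directly from the fundamental theorem of calculus: since $\frac{d}{dy}G(y;x) = -\bar H_y(x)$ and $\frac{d^2}{dy^2}G(y;x) = \delta_x(y) - 1$, integrating $\int_0^1 G(y;x) f''(y)\,dy$ by parts twice (being careful about the boundary terms at $y=0,1$, which is where the homogeneous Neumann structure of $G$ matters) yields $f(x)$ minus its average plus something — more precisely one expects $f(x) = \int_0^1 \bar H_y(x) f'(y)\,dy + \int_0^1 f(z)\,dz$, i.e. $\eta = \int_0^1 f$. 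One should verify this directly from \eqref{eq:heaviY}: $\int_0^1 \bar H_y(x) f'(y)\,dy = \int_0^x (y-1) f'(y)\,dy + \int_x^1 y f'(y)\,dy$, and an elementary integration by parts on each piece, collecting the boundary contributions at $y=x$ (where $\bar H_y(x)$ jumps by $-1$), gives exactly $f(x) - \int_0^1 f$. This pins down the constant $\eta$ and confirms the formula at the smooth level.

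Next I would pass from smooth $f$ to general $f \in BV(0,1)$. The natural route is the standard fact that every $f \in BV(0,1)$ can be approximated by smooth functions $f_k$ with $f_k \to f$ in $L^1(0,1)$ and $Df_k \rightharpoonup Df$ weakly-* as measures (strict convergence, i.e. also $|Df_k|(0,1) \to |Df|(0,1)$). For each $f_k$ the identity $f_k(x) = \int_0^1 \bar H_y(x)\, Df_k(y) + \eta_k$ holds with $\eta_k = \int_0^1 f_k$. Since $y \mapsto \bar H_y(x)$ is, for each fixed $x$, a bounded function with a single jump, and actually one can fix $x$ as a Lebesgue point of $f$, one passes to the limit: the left side converges (a.e. $x$, or in $L^1$), $\eta_k \to \int_0^1 f =: \eta$, and $\int_0^1 \bar H_y(x)\,Df_k(y) \to \int_0^1 \bar H_y(x)\,Df(y)$ by weak-* convergence against the (almost everywhere continuous, bounded) test function $\bar H_{(\cdot)}(x)$. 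Alternatively — and perhaps more cleanly — one avoids approximation entirely by Fubini: write $\int_0^1 \bar H_y(x)\,Df(y)$ directly, split according to $y < x$ and $y > x$, use $Df((0,x)) = f(x^-) - f(0^+)$ type identities (interpreting $f$ via its good representative), and verify the algebra reduces to $f(x) - \int_0^1 f(z)\,dz$ for a.e.\ $x$.

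The main obstacle I anticipate is the careful handling of boundary terms and of the jump of $\bar H_y(x)$ in the variable $y$ at $y = x$: the test function $y \mapsto \bar H_y(x)$ is not continuous, so pairing it against the measure $Df$ requires either choosing $x$ outside the (at most countable) jump set of $f$ so that no atom of $Df$ sits at $y=x$, or an explicit Fubini computation that tracks the contribution of the diagonal. A secondary technical point is that the statement only claims the identity "a.e.", which gives exactly the slack needed to discard the bad set of $x$'s; one should make sure $\eta$ genuinely does not depend on $x$, which the smooth computation already shows ($\eta = \int_0^1 f$). Once the smooth case is settled and the approximation/Fubini step is executed with the jump bookkeeping done correctly, the lemma follows; everything else is routine.
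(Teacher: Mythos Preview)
Your plan is sound and would yield a correct proof, but it takes a genuinely different route from the paper. One small slip: in your splitting of $\int_0^1 \bar H_y(x) f'(y)\,dy$ you have swapped the two pieces. For $y\in(0,x)$ one has $x>y$ and hence $\bar H_y(x)=y$, while for $y\in(x,1)$ one has $\bar H_y(x)=y-1$; with the correct split the integration by parts indeed produces $f(x)-\int_0^1 f$, so $\eta=\int_0^1 f$ as you claim.

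The paper avoids both the smooth-case computation and the approximation step altogether by a distributional argument: it tests the candidate identity against $\phi'$ for arbitrary $\phi\in C^1_c(0,1)$, uses Fubini (now with the \emph{smooth} function $\phi$, so the jump of $y\mapsto\bar H_y(x)$ is never paired against a measure), and reduces both sides to $-\int_0^1\phi\,Df$. This shows directly that $g(x):=f(x)-\int_0^1\bar H_y(x)\,Df(y)$ has zero distributional derivative and is in $L^1$, hence is constant a.e. The trade-off is clear: the paper's argument is shorter and sidesteps precisely the obstacle you flagged (pairing the discontinuous $y\mapsto\bar H_y(x)$ against $Df$, which in your approach forces either the portmanteau-type limit or the restriction to $x$ outside the jump set of $f$), whereas your approach is more constructive and identifies the constant $\eta=\int_0^1 f$ explicitly, which the paper leaves implicit.
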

(Before we prove this lemma, we observe the similarities between equations \eqref{eq:1D_GN} and \eqref{two_weights:GN}, keeping in mind that $\bar{H}_y$ is the derivative of the aforementioned Green's function $\bar{G}_y$.)


\begin{proof}
  We will derive the result by multiplying both sides of \eqref{eq:1D_GN} by the derivative of an arbitrary smooth function $\phi$ with compact support and integrate, showing that these integrals become identical. That is, we will show that 
  \begin{equation}
      \int_0^1 f(x)\phi'(x) dx = \int_0^1 \left(\int_0^1 \bar{H}_y(x)Df(y)\right)\phi'(x) dx \quad \forall \phi \in \DD. \label{eq:vareq}
  \end{equation}
  Indeed, by the definition of the Radon measure, the left-hand side satisfies 
  \begin{equation} \label{eq:left_hand_side}
      \int_0^1 f(x)\phi'(x)dx = -\int_0^1\phi(x)Df(x).
  \end{equation}
   A straightforward computation reveals that, for any $\phi \in \DD$,   
  \begin{eqnarray}
      \int_0^1\bar{H}_y(x)\phi'(x)dx &=& \int_0^y (y-1) \phi'(x)dx + \int_y^1 y\phi'(x) dx \nonumber \\&=& 
      y\left[\phi(x)\right]_0^1 - \left[\phi(x)\right]_0^y \nonumber \\ &=& -\phi(y). \label{eq:intphi}
  \end{eqnarray}
  Using Fubini's theorem, we get that the right-hand side of \eqref{eq:vareq} obeys 
  \begin{eqnarray} \nonumber
      \int_0^1 \left(\int_0^1 \bar{H}_y(x)Df(y)\right)\phi'(x) dx &=& \int_0^1\left(\int_0^1\bar{H}_y(x)\phi'(x) 
      dx\right) Df(y) \\
      \label{eq:right_hand_side} 
      &=&
      -\int_0^1 \phi(y)Df(y).
  \end{eqnarray}
  Equality \eqref{eq:vareq} now follows from \eqref{eq:left_hand_side} and \eqref{eq:right_hand_side}. 

  Before proceeding, note that
  \begin{eqnarray}
      \int_0^1 \left|\int_0^1 \bar{H}_y(x)Df(y)\right| dx &\leq& \int_0^1 \int_0^1 \left|\bar{H}_y(x) \right| dx |Df(y)| \nonumber \\ &\leq& \int_0^1 |Df(y)| < \infty,  \label{eq:l1bound}
  \end{eqnarray}
  which shows that $\int_0^1 \bar{H}_y(x)Df(y) \in L^1(0,1)$. 

  Now, by rewriting \eqref{eq:vareq} as  
  \begin{equation*}
      \int_0^1 \left[f(x) - \int_0^1\bar{H}_y(x)Df(y) \right]\phi'(x) dx = 0 \quad \forall \phi\in\DD,
  \end{equation*}
  it only remains to show that this will imply that $g(x) := f(x) - \int_0^1\bar{H}_y(x)Df(y)$ is constant a.e. Clearly, from \eqref{eq:l1bound} and since $f \in BV(0,1)$, we conclude that $g \in L^1(0,1)$. Furthermore, since the weak derivative of $g$ equals zero, it immediately follows that $g \in W^{1,1}(0,1)$. Therefore, from the fundamental lemma of calculus of variations, and a standard argument using mollifiers, we get that $g(x) = \eta$ a.e., which completes the proof.  
\end{proof}

Next, we argue that the proposed weight function is continuous, which implies that $TV_w(f)$ is well-defined for any $f \in BV(0,1)$. 
\begin{lemma}
    Let $w: \Omega \rightarrow \mathbb{R}_+$ be defined as in \eqref{def:w} and assume that $K: L^2(0,1) \rightarrow L^2(E)$ is continuous. Then $w \in C(\Omega)$. 
\end{lemma}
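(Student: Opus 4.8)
The plan is to show that $y \mapsto \|C\bar H_y\|_{L^2(E)}$ is continuous by exploiting the continuity of $y \mapsto \bar H_y$ as a map into $L^2(0,1)$ together with the boundedness of $C = QK$. First I would note that $C : L^2(0,1) \to L^2(E)$ is a bounded linear operator: $K$ is continuous by hypothesis, and $Q$ is an orthogonal projection, hence has norm at most one, so $\|C\| \le \|K\|$. Consequently, for any $y, y' \in \Omega$,
\begin{equation*}
    \bigl| \|C\bar H_y\|_{L^2(E)} - \|C\bar H_{y'}\|_{L^2(E)} \bigr| \le \|C\bar H_y - C\bar H_{y'}\|_{L^2(E)} \le \|K\| \, \|\bar H_y - \bar H_{y'}\|_{L^2(0,1)},
\end{equation*}
using the reverse triangle inequality in $L^2(E)$.

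The remaining step is to estimate $\|\bar H_y - \bar H_{y'}\|_{L^2(0,1)}$ and show it tends to $0$ as $y' \to y$. From the explicit formula \eqref{eq:heaviY}, $\bar H_y - \bar H_{y'} = H_y - H_{y'} - (y - y')$, and $H_y - H_{y'}$ is (up to sign) the indicator function of the interval between $y$ and $y'$, which has length $|y - y'|$. Hence a direct computation gives $\|H_y - H_{y'}\|_{L^2(0,1)}^2 = |y-y'|$ and $\|\bar H_y - \bar H_{y'}\|_{L^2(0,1)}^2 \le 2|y-y'| + 2|y-y'|^2$, so $\|\bar H_y - \bar H_{y'}\|_{L^2(0,1)} \to 0$ as $y' \to y$. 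Combining this with the bound above yields continuity of $w$ on $\Omega$; in fact it shows $w$ is (locally) $\tfrac12$-Hölder continuous.

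I do not expect any serious obstacle here: the argument is a routine $\varepsilon$–$\delta$ chaining of three elementary facts (boundedness of $C$, reverse triangle inequality, and $L^2$-continuity of translated Heaviside functions). The only point requiring a moment's care is making sure $Q$ is genuinely norm-nonexpansive — this holds because it is an orthogonal projection onto a closed subspace of the Hilbert space $L^2(E)$ (the span of the single vector $K1$, whose orthogonal complement is closed) — and making sure the explicit piecewise formula for $\bar H_y$ is used correctly when computing the $L^2$ distance. One could alternatively phrase the continuity of $y \mapsto \bar H_y$ via dominated convergence rather than the explicit norm computation, but the explicit computation is shorter and also delivers the Hölder modulus for free.
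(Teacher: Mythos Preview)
Your proof is correct and follows essentially the same approach as the paper: both argue that $y \mapsto \bar H_y$ is continuous into $L^2(0,1)$ and then use the boundedness of $C = QK$ and of the norm to conclude. Your version simply supplies the explicit estimate (yielding a H\"older modulus) where the paper calls it ``a straightforward calculation''; note a harmless sign slip in your formula, since $\bar H_y - \bar H_{y'} = H_y - H_{y'} + (y - y')$, but this does not affect the norm bound.
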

\begin{proof}
    It is a straightforward calculation to show that the map $y \mapsto \bar{H}_y$ is continuous as a mapping from $\Omega=(0,1)$ to $L^p(0,1)$, $1 \leq p < \infty$. Since we assume that $K$ is continuous and the orthogonal projection $Q$ is continuous, it follows that $C = QK$ and consequently $y \mapsto \|C\bar{H}_y\|_{L^2(E)}$ are continuous.  
\end{proof}

\begin{theorem} \label{thm:jump_pursuit_rigorous_proof}
    Let $w$ be the weight function defined in \eqref{def:w}. Then, $\rho H_{x^*} + \tau$ solves the jump pursuit problem \eqref{eq:jump_pursuit_problem}.
\end{theorem}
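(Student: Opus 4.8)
The plan is to establish the theorem by a direct lower-bound argument: I will show that every competitor $f\in BV(0,1)$ with $Kf=K(\rho H_{x^*}+\tau)$ satisfies $TV_w(f)\ge TV_w(\rho H_{x^*}+\tau)$. Since $\rho H_{x^*}+\tau$ is itself a step function, hence admissible, this immediately makes it a minimizer; in particular no existence or lower-semicontinuity machinery is required. First I would record the value at the candidate: its distributional derivative is the Radon measure $\rho\,\delta_{x^*}$, so the one-dimensional form of \eqref{def:TVw2} gives $TV_w(\rho H_{x^*}+\tau)=|\rho|\,w(x^*)=|\rho|\,\|C\bar{H}_{x^*}\|_{L^2(E)}$, using the definition \eqref{def:w} of $w$.

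Next, for an arbitrary admissible $f$, I would invoke Lemma \ref{lem:f_H_expression} to write $f(x)=\int_0^1\bar{H}_y(x)\,Df(y)+\eta$ for some constant $\eta$, and then apply $K$ followed by the orthogonal projection $Q$ of \eqref{eq:ortho_proj}. The constant term contributes $\eta\,QK1=0$, while linearity and boundedness of $C=QK$ allow $C$ to be moved through the integral against the finite measure $Df$, yielding $QKf=\int_0^1(C\bar{H}_y)\,Df(y)$ in $L^2(E)$. Running the same computation on $g:=\rho H_{x^*}+\tau$, whose derivative measure is $\rho\,\delta_{x^*}$, gives $QKg=\rho\,C\bar{H}_{x^*}$. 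Because the constraint $Kf=Kg$ forces $QKf=QKg$, I obtain the key identity $\int_0^1(C\bar{H}_y)\,Df(y)=\rho\,C\bar{H}_{x^*}$ in $L^2(E)$ --- this is precisely where passing from $K$ to the modified operator $C$, as flagged in the Remarks, is essential, since it eliminates the unknown mean value $\eta$.

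Finally, taking $L^2(E)$-norms in this identity and bounding the vector-valued integral, $\big\|\int_0^1(C\bar{H}_y)\,Df(y)\big\|_{L^2(E)}\le\int_0^1\|C\bar{H}_y\|_{L^2(E)}\,|Df|(y)=\int_0^1 w(y)\,|Df|(y)=TV_w(f)$, I conclude $|\rho|\,\|C\bar{H}_{x^*}\|_{L^2(E)}\le TV_w(f)$, i.e.\ $TV_w(\rho H_{x^*}+\tau)\le TV_w(f)$, which proves the claim. The main obstacle --- and the only step that is not a one-line computation --- is the interchange of $K$ and $Q$ with $\int_0^1\bar{H}_y\,Df(y)$ together with the accompanying norm estimate: one must read this as a Bochner integral of the continuous map $y\mapsto\bar{H}_y\in L^2(0,1)$ against the finite signed measure $Df$ of $f\in BV(0,1)$, after which commuting with the bounded operator $C$ and the inequality $\big\|\int(\cdot)\,Df\big\|\le\int\|\cdot\|\,|Df|$ are standard; the continuity of $y\mapsto\bar{H}_y$ used in the lemma preceding the theorem and the finiteness of $|Df|$ are exactly what make this rigorous.
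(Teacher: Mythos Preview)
Your proposal is correct and follows essentially the same approach as the paper: both compute $TV_w(\rho H_{x^*}+\tau)=|\rho|\,w(x^*)$, invoke Lemma~\ref{lem:f_H_expression} to represent a competitor $f$ via $\bar H_y$ and $Df$, use that $C=QK$ annihilates constants so that $Cf=\rho\,C\bar H_{x^*}$, and then bound $\|Cf\|_{L^2(E)}$ by $\int_0^1 w(y)\,|Df|(y)$. The only cosmetic difference is that you phrase the final estimate as the Bochner-integral norm inequality, whereas the paper writes it as Minkowski's integral inequality; these are equivalent here.
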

\begin{proof} 
 We could use the Lebesgue decomposition of $\bar{H}_{x^*}$ to show that the measure $D\bar{H}_{x^*}$ becomes the Dirac mass $\delta_{x^*}$. Alternatively, we can obtain this result from the distributional definition, employing \eqref{eq:TVw}, \eqref{eq:intphi} and \eqref{def:D_w}, 
\begin{align}
    \nonumber
    TV_w(\rho\bar{H}_{x^*}) &= \sup_{\phi \in \Dw} \int_0^1 \rho\bar{H}_{x^*}(x) \phi'(x) dx 
   =\sup_{\phi \in \Dw} \left\{- \rho\phi(x^*)  \right\} \\ 
    \label{eq:first_part_1D}
    &= |\rho|w(x^*).
\end{align}

Finally, we can now show the main result. Consider any $f \in BV(0,1)$ for which $Kf=K(\rho H_{x^*}+\tau)$, or $Cf=C(\rho H_{x^*})$ because $C$ annihilates constants, cf. \eqref{def:C}. We get, recalling \eqref{eq:first_part_1D} and the definition \eqref{def:w} of the weight function $w$, 
\begin{eqnarray*}
    TV_w(\rho\bar{H}_{x^*}+\tau) &=& TV_w(\rho\bar{H}_{x^*}) = |\rho| w(x^*) 
    = |\rho|\|CH_{x^*}\|_{L^2(E)} \\
    &=& \|C(\rho H_{x^*})\|_{L^2(E)} 
    =\|C(\rho H_{x^*}+\tau)\|_{L^2(E)}
    =\|Cf\|_{L^2(E)} \\
    &=& \left(\int_E \left[\left(\int_0^1 C\bar{H}_y(\cdot) D{f}(y) \right) \right]^2dz\right)^{1/2} \\ 
    &\leq&\int_0^1\left(\int_E \left[C\bar{H}_y(\cdot)\right]^2 dz\right)^{1/2}|D{f}(y)| 
    = \int_0^1 w(y)|Df(y)| \\
    &=& TV_w(f),
\end{eqnarray*}
where we have used Lemma \ref{lem:f_H_expression} in the seventh equality. We have also employed Minkowski's integral inequality and that 
\begin{equation*}
    C \left(\int_0^1 \bar{H}_y(\cdot) D{f}(y) + \eta \right) = \int_0^1 C\bar{H}_y(\cdot) D{f}(y),  
\end{equation*} 
which holds because the mapping $g:(0,1) \rightarrow BV(0,1), \, y \mapsto \bar{H}_y(\cdot)$ is Bochner integrable and $C \eta =0$. 
\end{proof}

\subsubsection*{Remark}
Here, we defined the weight function $w(y)$ in terms of the $L^2$-norm, see \eqref{def:w}. Since Minkowski's integral inequality holds for $L^p$-norms, Theorem \ref{thm:jump_pursuit_rigorous_proof} also holds if one employs the weight function $$w(y)=\| C \bar{H}_y \|_{L^p(E)}, \, y \in \Omega,$$ and $1\leq p < \infty$, provided that $Cd=0$ for any constant $d$, jmf. \eqref{def:C}. The latter would require that one can define an appropriate operator $Q$.

\subsection{Regularized problem}
Let us also consider the associated variational formulation
\begin{equation}\label{eq:variational_problem}
    \min_{f\in BV(0,1)} \left\{ \frac{1}{2}\left\|Kf-K(\rho \bar{H}_{x^*}+\tau)\right\|^2 + \alpha TV_w(f) \right\}.
\end{equation}
The proof of the following theorem is presented in Appendix \ref{sec:regularized_problem_1D}. 

\begin{theorem} \label{thm:variational_rigorous_proof}
    Let $w$ be the weight function defined in \eqref{def:w}. Then, 
    $$f_\alpha = \gamma\rho \bar{H}_{x^*} + \eta$$ solves the weighted total variational regularization problem \eqref{eq:variational_problem},
    where
    \begin{equation*}
        \gamma = 1-\frac{\alpha}{|\rho|\|C\bar{H}_{x^*}\|} \quad \textnormal{and} \quad \eta = \tau + \frac{(1-\gamma)\rho(K\bar{H}_{x^*},K1)}{\|K1\|^2}, 
    \end{equation*}
    provided that $0 < \alpha < |\rho|\|C\bar{H}_{x^*}\|$. Note that $\gamma \rightarrow 1$ and $\eta \rightarrow \tau$ as $\alpha \rightarrow 0$. 
\end{theorem}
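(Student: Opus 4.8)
The objective in \eqref{eq:variational_problem} is convex (a convex quadratic plus the convex functional $TV_w$), so the plan is to verify directly that $J(f):=\tfrac12\|Kf-g\|^2+\alpha TV_w(f)$, with $g:=K(\rho\bar H_{x^*}+\tau)$, satisfies $J(f)\ge J(f_\alpha)$ for every $f\in BV(0,1)$. The first — and, I expect, the only genuinely delicate — step is to compute the residual at $f_\alpha$. Using the given formulas for $\gamma$ and $\eta$,
\[
Kf_\alpha-g=(\gamma-1)\rho\,K\bar H_{x^*}+(\eta-\tau)K1=-(1-\gamma)\rho\Bigl(K\bar H_{x^*}-\tfrac{(K\bar H_{x^*},K1)}{\|K1\|^2}K1\Bigr),
\]
and the bracket is exactly the orthogonal projection $QK\bar H_{x^*}=C\bar H_{x^*}$; substituting $1-\gamma=\alpha/(|\rho|\,\|C\bar H_{x^*}\|)$ gives the clean identity
\[
Kf_\alpha-g=-\frac{\alpha\,\mathrm{sgn}(\rho)}{\|C\bar H_{x^*}\|}\,C\bar H_{x^*}.
\]
The role of the particular choice of $\eta$ is precisely that it renders $Kf_\alpha-g$ orthogonal to $K1$, i.e.\ a multiple of $C\bar H_{x^*}$; recognizing this structure is the crux.

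Next I would run the standard first-order convexity estimate. For any $f\in BV(0,1)$, convexity of $t\mapsto\tfrac12\|t-g\|^2$ yields
\[
\tfrac12\|Kf-g\|^2\ge\tfrac12\|Kf_\alpha-g\|^2+(Kf_\alpha-g,\,K(f-f_\alpha))_{L^2(E)}.
\]
Since $Q$ is self-adjoint, $QC\bar H_{x^*}=C\bar H_{x^*}$ and $C$ annihilates constants, one has $(C\bar H_{x^*},Kh)_{L^2(E)}=(C\bar H_{x^*},Ch)_{L^2(E)}$ for all $h$, while $Cf_\alpha=\gamma\rho\,C\bar H_{x^*}$; together with \eqref{eq:first_part_1D} and $\gamma>0$ (which holds under $0<\alpha<|\rho|\,\|C\bar H_{x^*}\|$) this gives $(Kf_\alpha-g,Kf_\alpha)_{L^2(E)}=-\alpha\,TV_w(f_\alpha)$. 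Plugging in, the target inequality $J(f)\ge J(f_\alpha)$ reduces to
\[
\frac{\mathrm{sgn}(\rho)}{\|C\bar H_{x^*}\|}\,(C\bar H_{x^*},\,Cf)_{L^2(E)}\le TV_w(f)\qquad\text{for all }f\in BV(0,1).
\]

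To prove this last bound I would reuse the machinery from the proof of Theorem \ref{thm:jump_pursuit_rigorous_proof}: by Lemma \ref{lem:f_H_expression}, $f=\int_0^1\bar H_y\,Df(y)+\text{const}$, hence $Cf=\int_0^1 C\bar H_y\,Df(y)$ (Bochner integrability of $y\mapsto\bar H_y$ and $C\cdot\text{const}=0$), so that
\[
(C\bar H_{x^*},Cf)_{L^2(E)}=\int_0^1 (C\bar H_{x^*},C\bar H_y)_{L^2(E)}\,Df(y),
\]
and Cauchy–Schwarz gives $|(C\bar H_{x^*},C\bar H_y)_{L^2(E)}|\le\|C\bar H_{x^*}\|\,\|C\bar H_y\|=\|C\bar H_{x^*}\|\,w(y)$ by \eqref{def:w}; the integral is therefore bounded in modulus by $\int_0^1 w(y)\,|Df(y)|=TV_w(f)$. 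This closes the argument, and $\gamma\to1$, $\eta\to\tau$ as $\alpha\to0$ are read off directly from the formulas. (An equivalent packaging: exhibit $p:=\mathrm{sgn}(\rho)\,C^{\ast}C\bar H_{x^*}/\|C\bar H_{x^*}\|$ as an element of $\partial TV_w(f_\alpha)$ and observe that $f_\alpha$ then satisfies the Euler–Lagrange inclusion $0\in K^{\ast}(Kf_\alpha-g)+\alpha\,\partial TV_w(f_\alpha)$.)
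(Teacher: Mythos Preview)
Your argument is correct and shares the same core ingredients as the paper's proof (the residual at $f_\alpha$ is a specific multiple of $C\bar H_{x^*}$; the final bound comes from Lemma~\ref{lem:f_H_expression} and Cauchy--Schwarz), but the packaging is genuinely different. The paper works on the adjoint side: it verifies the Euler--Lagrange inclusion $K^{*}(g-Kf_\alpha)\in\alpha\,\partial TV_w(f_\alpha)$ by first decomposing $K^{*}K$ into $C^{*}C$ plus a rank-one piece (a separate lemma), then explicitly constructing an antiderivative $z$ of the candidate subgradient and checking the three conditions $|z|\le w$, $\int z\,Df_\alpha=TV_w(f_\alpha)$, $Tz=0$ (another lemma). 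Your route stays on the data side in $L^2(E)$ and directly establishes the variational inequality $J(f)\ge J(f_\alpha)$, reducing it to the single estimate $\mathrm{sgn}(\rho)(C\bar H_{x^*},Cf)\le\|C\bar H_{x^*}\|\,TV_w(f)$; this bypasses the subgradient-characterization lemma entirely (no antiderivative, no trace or zero-mean verification). What the paper's approach buys is an explicit element of $\partial TV_w(f_\alpha)$, which can be useful for further analysis; what yours buys is economy---two auxiliary lemmas collapse into one clean residual computation and one inequality. Your closing parenthetical correctly identifies the equivalence: the paper's $p$ is precisely your $\mathrm{sgn}(\rho)\,C^{*}C\bar H_{x^*}/\|C\bar H_{x^*}\|$, and its Lemma on subgradients is doing, via $z$, exactly the Cauchy--Schwarz step you perform directly.
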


Let us also mention that the analysis above can also be carried out in the abstract framework of singular vectors and ground states as defined in \cite{benning2012ground} ( see also \cite{burger2022nonlinear} for a more recent overview). While we give explicit computations in this paper for the sake of self-containedness it is worth noting that for the original total variation the step function with jump in the mid point is the ground state (corresponding to the lowest singular value), while jumps moving to the boundary result in increasing singular values and thus increasing bias (respectively reconstruction error). With the above the definition of the weighted total variation the singular values with respect to this new seminorm are equilibrated.

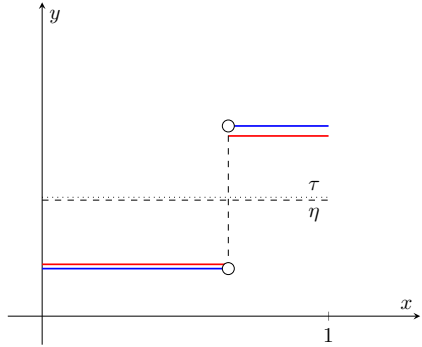
\begin{figure}[htbp]
    \centering
    \begin{tikzpicture}[scale=.8]
    \begin{axis}[
        axis x line=middle,
        axis y line=middle,
        xlabel={$x$},
        ylabel={$y$},
        xmin=0, xmax=1.2,
        ymin=0, ymax=3,
        xtick={0, 1},
        ytick={0},
        enlargelimits=true,
        domain=0:1,
        samples=100,
        legend pos=north east
    ]
    
    \def\xstar{0.65}  
    \def\rho{1.5}    
    
    \addplot[blue, thick, domain=0:\xstar] {1.25 - 0.5 * 1.5} node[pos=0.5, below] {};
    \addplot[blue, thick, domain=\xstar:1] {1.25 + 0.5 * 1.5};
    
    \addplot[red, thick, domain=0:\xstar] {1.22 - 0.5 * 0.9*1.5} node[pos=0.5, below] {};
    \addplot[red, thick, domain=\xstar:1] {1.22 + 0.5 * 0.9*1.5};
    
    \draw[dashed] (axis cs:\xstar,1.25-0.5*1.5) -- (axis cs:\xstar,{1.25 + 0.5 * 1.5});
    
    \node[fill=white, inner sep=2pt, draw, circle] at (axis cs:\xstar,{1.25 - 0.5 * 1.5}) {};
    \node[fill=white, inner sep=2pt, draw, circle] at (axis cs:\xstar,{1.25 + 0.5 * 1.5}) {};
    
    
    \addplot[dotted, black] coordinates {(0,1.25) (1,1.25)} node[pos=0.95, above] {$\tau$};
    
    \addplot[dashed, black] coordinates {(0,1.22) (1,1.22)} node[pos=0.95, below] {$\eta$};
    
    \end{axis}
    \end{tikzpicture}
    \caption{A visualization of how the exact solution $\rho \bar{H}_{x^*}+\tau$ (in blue color) and the inverse solution $f_\alpha$ (in red color) presented in Theorem \ref{thm:variational_rigorous_proof} are related. The jump is correctly located, but slightly smaller and the center line is slightly shifted depending on the inner-product $(K\bar{H}_{x^*}, K1)$.}
\end{figure}

\section{2D analysis} \label{sec:2D3D_analysis}
In 1D we defined the weights in terms of the shifted Heaviside function \eqref{eq:heaviY}. And, as made clear by the proofs of theorems \ref{thm:jump_pursuit_rigorous_proof} and \ref{thm:variational_rigorous_proof}, the crucial step in showing that \eqref{eq:jump_pursuit_problem} and \eqref{eq:variational_problem} admit "one-jump" minimizers lies in the relationship between the Heaviside function $\bar{H}_{x^*}$ and the Dirac mass $\delta_{x^*}$. The purpose of this section is to generalize these ideas to 2D. To that end, it is useful to recall that one may regard $\bar{H}_{x^*}$ as a derivative of a Green's function associated with $x^*$. 


Given $\Omega \subset \mathbb{R}^2$, let $G(x;x^*)$ be a Green's function associated with this domain with a singularity at $x^*$ and subject to homogeneous Dirichlet boundary conditions, cf. \eqref{two_weights:GD} and the discussion in Section \ref{sec:two_weights}. Keeping the close connection between the Dirac function $\delta_{x^*}$ and $G(x;x^*)$ in mind, we can express a function $f:\Omega \rightarrow \mathbb{R}$ in the form
\begin{eqnarray*}
    f(x) &=& \int_{\Omega} f(y)\delta_{x}(y) dy = -\int_\Omega f(y)\Delta_y G(y;x)dy \\
    &=& \int_\Omega \nabla_{\!y} G(y;x) \cdot Df(y)  - \int_{\partial\Omega} f(y)\partial_\mathbf{n}G(y;x)dS(y) \\
    &=& \int_\Omega \nabla_{\!y}G(x;y) \cdot Df(y) - \int_{\partial\Omega} f(y)\partial_\mathbf{n}G(x;y)dS(y),
\end{eqnarray*}
where we used the symmetry of the Green's function in the last equality. 

Now, assuming sufficient regularity, and following the 1D analysis, we get the upper bound 
\begin{eqnarray}
    \|Kf\|_{L^1(E)} &=& \int_{E} |(Kf)(z)| \ dz \nonumber \\
    &=& \int_{E} \left|K\left[\int_\Omega \nabla f(y) \cdot \nabla_{\!y} G(\cdot;y) dy - \int_{\partial\Omega} f(y)\partial_\mathbf{n}G(\cdot;y)dS(y)\right]\right| \ dz \nonumber\\
    &=& \int_E\left|\int_\Omega \partial_{y_1}f(y) K \partial_{y_1} G(\cdot;y) dy + \int_\Omega \partial_{y_2}f(y) K \partial_{y_2} G(\cdot;y) dy \right. \nonumber\\ &-& \left. \int_{\partial\Omega} f(y)K\partial_\mathbf{n}G(\cdot;y)dS(y)\right| \ dz \nonumber \\
    &\leq& \int_\Omega \left[\int_E \left|K\partial_{y_1}G(\cdot;y) \right|dz\right] |\partial_{y_1}f(y)| dy \nonumber\\ &+& 
    \int_\Omega \left[\int_E \left|K\partial_{y_2}G(\cdot;y) \right|dz\right] |\partial_{y_2}f(y)| dy \nonumber\\
    &+& \int_{\partial{\Omega}} \left[\int_E \left| K\partial_{\mathbf{n}}G(\cdot;y) \right| dz \right] |f(y)| dS(y) := \overline{TV}_w(f), \label{eq:alfred2}
\end{eqnarray}
where the weights are defined by
\begin{eqnarray*}
    w_1(y) &=& \int_E \left|K\partial_{y_1}G(\cdot;y) \right|dz, \\
    w_2(y) &=& \int_E \left|K\partial_{y_2}G(\cdot;y) \right|dz, \\
    w_\partial(y) &=& \int_E \left|K\partial_{\mathbf{n}}G(\cdot;y) \right|dz.   
\end{eqnarray*}
Note that $w_\partial$ is only present when the Green's function is generated using Dirichlet boundary conditions. Here we use Dirichlet boundary conditions because we want to recover sources well/properly inside $\Omega$, cf. Section \ref{sec:two_weights}. 

\begin{remark} \label{eq:Lp_defining_weights}
    Note that in \eqref{eq:alfred2} we could have used any $L^p$-norm and employed Minkowski's integral inequality to obtain 
    \begin{eqnarray}
        \|Kf\|_{L^p(E)} &\leq& \int_\Omega \left\|K\partial_{y_1}G(\cdot;y) \right\|_{L^p(E)} |\partial_{y_1}f(y)| dy \nonumber\\ &+& 
    \int_\Omega \left\|K\partial_{y_2}G(\cdot;y) \right\|_{L^p(E)} |\partial_{y_2}f(y)| dy \nonumber\\
    &+& \int_{\partial{\Omega}} \left\|K\partial_{\mathbf{n}}G(\cdot;y) \right\|_{L^p(E)} |f(y)| dS(y). \label{eq:alfred2p}
    \end{eqnarray}
    That is, provided sufficient regularity, we have the freedom to choose the $L^p$-norm for the weights, including $p=\infty$. Of course, this leads to different weights: $w_1(y) = \left\|K\partial_{y_1}G(\cdot;y) \right\|_{L^p(E)}$, etc.
\end{remark}

\begin{figure}[H]
    \centering
    \begin{tikzpicture}[scale=.8]
    \draw[thick] (0,0) rectangle (5,5);
    \node at (2.5, 0.5) {\Large \(\Omega\)};
    \draw[thick] (1,2) rectangle (3.5,4);
    \node at (2.25,3) {\Large \(R\)};
    \node[left] at (1,3) {\(\Gamma_1\)};
    \node[below] at (2.25,2) {\(\Gamma_2\)};
    \node[right] at (3.5,3) {\(\Gamma_3\)};
    \node[above] at (2.25,4) {\(\Gamma_4\)};
    \node[left] at (0,2.5) {\(\partial\Omega_1\)};
    \node[below] at (2.5,0) {\(\partial\Omega_2\)};
    \node[right] at (5,2.5) {\(\partial\Omega_3\)};
    \node[above] at (2.5,5) {\(\partial\Omega_4\)};
    \filldraw[blue] (1,3.7) circle (2pt);
    \node[blue, left] at (1,3.7) {\(y\)};
    \filldraw[blue] (3.5,2.3) circle (2pt);
    \node[blue, right] at (3.5,2.3) {\(y'\)};
    \filldraw[blue] (1.2,4) circle (2pt);
    \node[blue, above] at (1.2,4) {\(\tilde{y}\)};
    \end{tikzpicture}
    \caption{A rectangular domain $\Omega$ and the support $R \subset \Omega$, with  boundary \(\Gamma_1 \cup \Gamma_2 \cup \Gamma_3 \cup \Gamma_4\), of the true source $f^*=\chi_R$. The blue dots represent some arbitrary \(y \in \Gamma_1\) and \(y' \in \Gamma_3\).}
    \label{fig:inner-rectangle}
\end{figure}
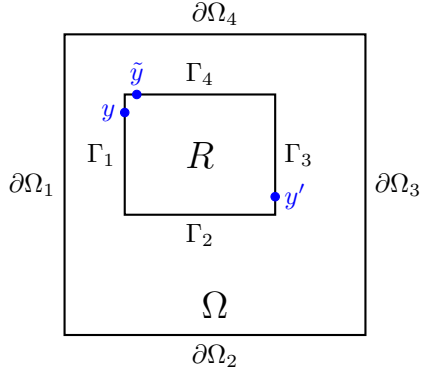

Consider the problem 
\begin{equation} \label{eq:jump_pursuit_2D}
    \min_{f \in BV(\Omega)} \overline{TV}_w(f) \quad \textnormal{subject to} \quad Kf = Kf^*. 
\end{equation} 
If the true source $f^*$ obeys  
\begin{equation} \label{eq:assume_alfred1}
    \overline{TV}_w(f^*) = \|Kf^*\|_{L^1(E)},
\end{equation}
and $f$ is any function satisfying $$Kf=Kf^*,$$
then we can employ \eqref{eq:alfred2} to conclude that 
\begin{align*}
    {TV}_w(f^*) &= \|Kf^*\|_{L^1(E)} \\
    &= \|Kf\|_{L^1(E)} \\
    &\leq \overline{TV}_w(f). 
\end{align*}
Consequently, $f^*$ is a solution of \eqref{eq:jump_pursuit_2D}. 

Provided that suitable assumptions are fulfilled, we will now argue that \eqref{eq:assume_alfred1} holds when $f^* = \chi_R$, where $\chi_R$ denotes the  characteristic function of a rectangle, see Figure \ref{fig:inner-rectangle}. Roughly speaking, our analysis requires disjoint images, under $K$, of the line integrals along $\Gamma_1, \Gamma_2, \Gamma_3, \Gamma_4$ of the normal derivatives of the Green's function. The second assumption \eqref{eq:propG2}, expresses that, for any fixed $z$, the sign of $K(\nabla_{\!y} G(\cdot;y) \cdot \mathbf{n}(y)) (z)$ must be the same for all $y \in \Gamma_j$. These assumptions are discussed in somewhat more detail after the proof of the following result:


\begin{theorem}\label{thm:disjointLines}
    Assume that $f^* = \chi_R$, where $R = (a,b) \times (c,d) \subset \Omega$, cf. Figure \ref{fig:inner-rectangle}. Also, we assume that the observation domain $E$ can be decomposed into mutually disjoint sets $E_j$, $\cup_{j=1}^4 E_j = E$, for which the "sources" $\partial_{y_i}G(y;x)$ associated with the boundaries $\Gamma_1$ (left), $\Gamma_2$ (bottom), $\Gamma_3$ (right), and $\Gamma_4$ (top) of $R$ obey the following: 
    \begin{itemize}
        \item For $j \in \{1,2,3,4\}$: 
    \begin{align}
        \textnormal{supp}\left(K \int_{\Gamma_j} \nabla_{\!y} G(\cdot;y) \cdot \mathbf{n}(y) d\sigma(y)\right) &\subset E_j. \label{eq:propG1} 
    \end{align}
    \item For $j \in \{1,2,3,4\}$: For each $z \in E_j$,  
    \begin{align}
        \textnormal{sgn}[K(\nabla_{\!y} G(\cdot;y) \cdot \mathbf{n}(y)) (z)] &= c_j \quad \forall y \in \Gamma_j, \label{eq:propG2}
    \end{align}
    where $c_j \in \{-1,0,1\}$ is a constant. 
    \end{itemize}
    Then, 
    \begin{equation*}
        f^* \in \argmin_{f \in BV(\Omega)} \overline{TV}_w(f) \quad \textnormal{subject to} \quad Kf = K\chi_R.
    \end{equation*}
\end{theorem}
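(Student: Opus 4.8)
The plan is to verify the hypothesis \eqref{eq:assume_alfred1}, i.e. that $\overline{TV}_w(\chi_R) = \|K\chi_R\|_{L^1(E)}$, since the discussion immediately preceding the theorem shows that this single identity forces $f^* = \chi_R$ to be a minimizer via \eqref{eq:alfred2}. By the representation \eqref{def:TVew2} and the defining formula $w_i(y) = \int_E |K\partial_{y_i}G(\cdot;y)|\,dz$, $w_\partial(y) = \int_E |K\partial_{\mathbf n}G(\cdot;y)|\,dz$, the left-hand side $\overline{TV}_w(\chi_R)$ reduces to a boundary term: the distributional gradient $D\chi_R$ is supported on $\partial R = \Gamma_1\cup\cdots\cup\Gamma_4$ as the vector measure $\mathbf n(y)\,d\sigma(y)$, so the interior weighted terms collapse and one gets $\overline{TV}_w(\chi_R) = \sum_{j=1}^4 \int_{\Gamma_j} \big(\int_E |K(\nabla_y G(\cdot;y)\cdot\mathbf n(y))(z)|\,dz\big)\,d\sigma(y)$. (One should be a little careful here about whether $\overline{TV}_w$ evaluated on $\chi_R$ really picks up the clean boundary contribution; because $R$ is compactly contained in $\Omega$, the relevant jump set is interior and the extended functional agrees with the weighted $TV$ measure of $\chi_R$ — this is the point where I would invoke \eqref{def:TVew2} carefully.)

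Next I would rewrite the right-hand side $\|K\chi_R\|_{L^1(E)}$ using the Green's representation. Applying the identity $f(x) = \int_\Omega \nabla_y G(x;y)\cdot Df(y) - \int_{\partial\Omega} f\,\partial_{\mathbf n}G(x;y)\,dS$ to $f=\chi_R$ and using $\textrm{supp}(D\chi_R)\subset R\subset\subset\Omega$ (so the $\partial\Omega$ term vanishes), we obtain $\chi_R = \int_{\partial R}\nabla_y G(\cdot;y)\cdot\mathbf n(y)\,d\sigma(y) = \sum_{j=1}^4 \int_{\Gamma_j}\nabla_y G(\cdot;y)\cdot\mathbf n(y)\,d\sigma(y)$, hence $K\chi_R(z) = \sum_{j=1}^4 \big(K\int_{\Gamma_j}\nabla_y G(\cdot;y)\cdot\mathbf n(y)\,d\sigma(y)\big)(z)$. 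Now the disjoint-support assumption \eqref{eq:propG1} is exactly what lets us split the $L^1(E)$ norm without loss: for $z\in E_j$ only the $j$-th summand is nonzero, so $\|K\chi_R\|_{L^1(E)} = \sum_{j=1}^4 \int_{E_j}\big|K\int_{\Gamma_j}\nabla_y G(\cdot;y)\cdot\mathbf n(y)\,d\sigma(y)(z)\big|\,dz$.

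The remaining step is to show, for each fixed $j$ and each $z\in E_j$, that $\big|\int_{\Gamma_j} K(\nabla_y G(\cdot;y)\cdot\mathbf n(y))(z)\,d\sigma(y)\big| = \int_{\Gamma_j}\big|K(\nabla_y G(\cdot;y)\cdot\mathbf n(y))(z)\big|\,d\sigma(y)$, i.e. that one may pull the absolute value inside the $\Gamma_j$-integral. This is precisely the content of the sign condition \eqref{eq:propG2}: since $\textrm{sgn}[K(\nabla_y G(\cdot;y)\cdot\mathbf n(y))(z)] = c_j$ is constant in $y\in\Gamma_j$, the integrand does not change sign, so the triangle inequality for integrals is an equality (if $c_j=0$ both sides are $0$). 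Combining this with the previous two displays gives $\|K\chi_R\|_{L^1(E)} = \sum_{j=1}^4 \int_{\Gamma_j}\int_{E_j}|K(\nabla_y G(\cdot;y)\cdot\mathbf n(y))(z)|\,dz\,d\sigma(y)$, and then using \eqref{eq:propG1} once more (the integrand over $E\setminus E_j$ vanishes) we may enlarge each inner integral from $E_j$ back to all of $E$, recovering exactly the boundary expression for $\overline{TV}_w(\chi_R)$ computed in the first step. This establishes \eqref{eq:assume_alfred1} and hence the theorem.

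I expect the main obstacle to be the first step: rigorously justifying that $\overline{TV}_w(\chi_R)$ equals the pure boundary integral $\sum_j\int_{\Gamma_j}w\ldots$ with no spurious interior contribution, and that the weights $w_i$ are regular enough (e.g. continuous, or at least such that the supremum in \eqref{eq:TVew} is attained/approximated appropriately) for the identity \eqref{def:TVew2} to apply to the rough function $\chi_R$ — this parallels the care needed around Fubini and Bochner integrability in the 1D proof of Theorem \ref{thm:jump_pursuit_rigorous_proof}. The Fubini interchange between the $\Gamma_j$-integral and the $E$-integral (used implicitly when writing $K\int_{\Gamma_j}\cdots = \int_{\Gamma_j}K(\cdots)$ and when swapping orders at the end) also needs an integrability hypothesis, presumably subsumed in the "sufficient regularity" already invoked for \eqref{eq:alfred2}; I would state this explicitly. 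The sign and disjointness hypotheses themselves do the real combinatorial work and, once in force, make the chain of equalities essentially mechanical.
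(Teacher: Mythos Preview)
Your proposal is correct and follows essentially the same route as the paper: compute $\overline{TV}_w(\chi_R)$ as the weighted perimeter integral over $\partial R$, represent $K\chi_R$ via the Green's identity as a sum of four edge contributions, use \eqref{eq:propG1} to split the $L^1(E)$ norm over the disjoint $E_j$, and use \eqref{eq:propG2} to pull the absolute value inside each $\Gamma_j$-integral, recovering the same perimeter expression. The paper's proof is organized identically (it even writes out the chain of equalities you describe), and the regularity/Fubini caveats you flag are handled in the paper exactly as you anticipate---by the blanket ``sufficient regularity'' standing assumption invoked for \eqref{eq:alfred2}.
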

\begin{proof}
    First, since $f^* = \chi_R$, we note that
    \begin{eqnarray*}
        TV(f^*) &=& \int_\Omega w_1 |\partial_{x_1}f^*| + \int_\Omega w_2 |\partial_{x_2}f^*| + \int_{\partial\Omega} w_n|f^*| \\ 
        &=& \int_{\Gamma_1} w_1 + \int_{\Gamma_2} w_2 + \int_{\Gamma_3} w_1 + \int_{\Gamma_4} w_2.
    \end{eqnarray*}
    Secondly, we can express $f^* = \chi_R$ in the following form  
    \begin{eqnarray*}
        \chi_R(x) &=& \int_{\Omega} \chi_R(y)\delta_{x}(y) dy 
        =-\int_\Omega \chi_R(y)\Delta G(y;x)dy 
       = -\int_R \Delta G(y;x)dy \\ &=&
         -\int_{\partial R} \nabla_{\!y} G(y;x) \cdot \mathbf{n}(y) d\sigma(y),
         = -\int_{\partial R} \nabla_{\!y} G(x;y) \cdot \mathbf{n}(y) d\sigma(y).
    \end{eqnarray*}
    Thus, if we evaluate $Kf^*$ at a point $z \in E$ using the above representation of $f^*$, we get
    \begin{align*}
        (Kf^*)(z) &= - \left(K\int_{\partial R} \nabla_{\!y}G(\cdot;y)\cdot \mathbf{n}(y)d\sigma(y)\right)(z) \\ 
        &= - \left(K \sum_i \int_{\Gamma_i} \nabla_{\!y}G(\cdot;y)\cdot \mathbf{n}(y)d\sigma(y)\right)(z).
    \end{align*}
    Consequently, we obtain, using \eqref{eq:propG1} and \eqref{eq:propG2} in the second and fifth equality below, respectively, 
    \begin{eqnarray*}
        \|Kf^*\|_{L^1(E)} &=& 
        \int_E \left| \sum_i K \int_{\Gamma_i} (\nabla_{\!y} G(\cdot;y)\cdot \mathbf{n}(y))d\sigma(y)\right| dz \\ &=&
        \int_E \sum_i \left|\int_{\Gamma_i}K(\nabla_{\!y} G(\cdot;y)\cdot \mathbf{n}(y))d\sigma(y)\right| dz \\ &=&
        \sum_j \int_{E_j} \sum_i \left|\int_{\Gamma_i}K(\nabla_{\!y} G(\cdot;y)\cdot \mathbf{n}(y))d\sigma(y)\right| dz \\ &=&
        \int_{E_1}\left|-\int_{\Gamma_1} K\partial_{y_1}G(\cdot;y)d\sigma(y) \right|dz + \int_{E_2}\left|-\int_{\Gamma_2} K\partial_{y_2}G(\cdot;y)d\sigma(y) \right| dz \\ &+& \int_{E_3}\left|\int_{\Gamma_3} K\partial_{y_1}G(\cdot;y)d\sigma(y) \right|dz + \int_{E_4}\left|\int_{\Gamma_4} K\partial_{y_2}G(\cdot;y)d\sigma(y) \right|dz \\ &=&
        \int_{\Gamma_1}\int_{E_1}\left| K\partial_{y_1}G(\cdot;y)\right|dz d\sigma(y) + \int_{\Gamma_2}\int_{E_2}\left| K\partial_{y_2}G(\cdot;y)\right|dz d\sigma(y) \\ &+& \int_{\Gamma_3}\int_{E_3}\left| K\partial_{y_1}G(\cdot;y)\right|dz d\sigma(y) + \int_{\Gamma_4}\int_{E_4}\left| K\partial_{y_2}G(\cdot;y)\right|dz d\sigma(y)  \\ &=&
        \int_{\Gamma_1} w_1 + \int_{\Gamma_2} w_2 + \int_{\Gamma_3} w_1 + \int_{\Gamma_4} w_2 = \overline{TV}_w(f^*).
    \end{eqnarray*}
    The result now follows from \eqref{eq:alfred2}.
\end{proof}

Note that Theorem \ref{thm:disjointLines} holds whenever $\Omega$, $R$ and $K$ are such that \eqref{eq:propG1} and \eqref{eq:propG2} hold for a finite number of sets $\{ E_j \}$ and $\{ \Gamma_j \}$. That is, the argument is not restricted to the rectangle-inside-rectangle setup, which we decided to consider for an easy exposition. Nevertheless, $f^*$ must be the  characteristic function of the subdomain $R$.  

We will now motivate why assumptions \eqref{eq:propG1} and \eqref{eq:propG2} are (approximately) satisfied when $K$ exhibits a strong diffusive effect, provided that we consider a rectangle-inside-rectangle scenario; see Figure \ref{fig:inner-rectangle}. To this end, consider the specific case where $E = \partial\Omega$, that is, $K: BV(\Omega) \rightarrow L^1(\partial\Omega)$. 

In this setting, let us examine the two Green functions that have singularities at the points $y$ and $y'$, respectively; cf. Figure \ref{fig:inner-rectangle}. 
The normal derivatives of these Green's functions resemble horizontally oriented dipoles. Due to the strong diffusive property of $K$, the image of $\partial_{y_1}G(\cdot,y)$, $y \in \Gamma_1$, under $K$ has most of its relative magnitude concentrated along $\partial\Omega_1$, while the image of $\partial_{y_2}G(\cdot,y')$, $y \in \Gamma_3$, under $K$ is relatively concentrated along $\partial\Omega_3$. A similar property hold for Green's functions associated with points located along $\Gamma_2$ and $\Gamma_4$.  

When comparing sources located along vertical and horizontal line segments — for example $y \in \Gamma_1$ and $\tilde{y} \in \Gamma_4$, we consider the images of $\partial_{y_1}G(\cdot,y)$ and $\partial_{y_2}G(\cdot,\tilde{y})$ under $K$. These correspond to images of horizontally and vertically oriented dipole structures, which implies that these images are (relatively) concentrated on $\partial\Omega_1$ and $\partial\Omega_4$, respectively. 

Based on the discussion presented in the last two paragraphs, we conclude that (roughly) \eqref{eq:propG1} and \eqref{eq:propG2} are satisfied with $E_j=\partial \Omega_j$, $j=1,2,3,4$. 
We also remark that when the rectangle is small or thin, then $y \in \Gamma_1$ and $y' \in \Gamma_3$ are close to each other, making the assumption of disjoint images under $K$ of the partial derivatives of the involved Green's function more dubious. In the numerical experiments section, we will see this effect quite clearly: small rectangles are overestimated (in size) in the recovery process.

\section{Hybrid regularization method} \label{sec:hybrid} 
We will see in the numerical experiments section that weighted TV-regularization has a tendency to overestimate the size of small sources. This suggests that it might be beneficial to combine weighted TV-regularization with sparsity regularization, incorporated in terms of a measure theoretical framework in the infinite dimensional setting:    
\begin{equation} \label{def:hybrid_cost_func}
    \min_{f\in BV(\Omega)} \left\{ \frac{1}{2}\|K \mu - d\|^2_{L^2(E)} + \alpha TV_w(f) + \beta \| \mu \|_{\mathcal{M}_{\ws} (\Omega)} \right\},
\end{equation}
where $\mu$ is the measure associated with $f$, see \eqref{eq:BVfunction_yields_measure} below. In this section we thus assume that $K$ operates on measures instead of functions, which is common when one considers sparsity regularization in an infinite dimensional setting. We now define the additional notation and the weight function $\ws$ to be employed in the sparsity term. 

Let $\mathcal{M}(\Omega)$ be the space of Radon measures and assume that we have a bounded linear operator $K:\mathcal{M}(\Omega) \rightarrow L^2(E)$ which admits the kernel representation
\begin{equation*}
    K\mu = \int_\Omega k(x,\cdot)d\mu(x),
\end{equation*}
where $k: \Omega \times E \rightarrow \mathbb{R}$ and, as above, $E$ denotes the observation domain.
We define the weight function $\ws: \Omega\rightarrow \mathbb{R}$ by  
\begin{equation} \label{def:weight_operator_sparsity}
    \ws(x) = \|K\delta_x\|_{L^2(E)},
\end{equation}
where $\delta_x$ is the Dirac measure associated with the point $x \in \Omega$ and we assume that there exist constants $\ws_{\min}$ and $\ws_{\max}$ such that $$0 < \ws_{\min} \leq \ws(x) \leq \ws_{\max} < \infty, \ \forall x \in \Omega.$$

This also enables us to introduce the weighted Radon space $\mathcal{M}_{\ws}(\Omega)$ equipped with the norm\footnote{The map $x \mapsto \int_E |k(x,y)|^2 d\nu(y)$ is measurable (as a pointwise integral over measurable functions), and therefore the map $x \mapsto \|K\delta_x\|_{L^2(E)}$ is measurable, which justifies the weighted norm.}
\begin{equation*}
    \|\mu\|_{\mathcal{M}_{\ws}(\Omega)} = \int_\Omega \ws(x) \, d|\mu|,
\end{equation*}
where $|\mu|$ is the total variation measure of $\mu$.

Let $R \subset \Omega$ be a set of finite perimeter. Since $\chi_{R} \in L^1_{loc}(\Omega)$, it follows  that 
\begin{equation} \label{eq:radonnikodym}
    \mu_R(A) = \int_A \chi_{R}(x)dx,
\end{equation}
where $A \subset \Omega$ is any Lebesgue measurable set, defines a Radon measure $\mu_R$. 
%
%
Let $\mu_R$ constitute the true source, for which we use the notation;   
\begin{equation} \label{eq:positive_true_measure}
\mu^* = \mu_R, 
\end{equation}
which consistent with the symbolism used in the previous sections. 
Also, assume that the following relation between the subdomain $R$ and the forward operator holds: There exist a continuous function $\tau: \Omega \rightarrow \mathbb{R}$ and a point $\bar{x} \in R$ such that
\begin{equation} \label{eq:parallel_true_measure}
    K \delta_z = \tau(z) K \delta_{\bar{x}}, \quad \tau(z)>0, \quad \; \forall z \in R. 
\end{equation}
That is, we assume that the images under $K$ of the individual Dirac measures associated with the points in $R$ are parallel. (Typically, $\tau(z) \approx 1$, reflecting the smoothing property of the forward mapping). Roughly speaking, it is plausible that \eqref{eq:parallel_true_measure} approximately holds provided that the extent of the region $R$ is not too large compared with $R$'s distance to the boundary $\partial \Omega$, provided that $E=\partial \Omega$. Under these circumstances, we will now prove that the region $R$ can be recovered, in a measure theoretical sense, by solving a sparsity basis pursuit problem. Thereafter, we employ this result to analyze the basis pursuit counterpart to \eqref{def:hybrid_cost_func} when $\alpha \rightarrow 0$ for a fixed $\beta > 0$    

\begin{proposition} \label{prop:f_star_solves_sparsity}
    Let $\mu^* = \mu_R$ and assume that \eqref{eq:parallel_true_measure} holds. Then 
    \begin{equation} \label{eq:f_star_solves_sparsity}
        \mu^* \in \argmin_{\mu \in \mathcal{M}(\Omega)} \| \mu \|_{\mathcal{M}_{\ws} (\Omega)} \quad \textnormal{subject to} \quad K\mu = K\mu^*, 
    \end{equation}
    where $\ws: \Omega \rightarrow \mathbb{R}$ is defined in \eqref{def:weight_operator_sparsity}. 
\end{proposition}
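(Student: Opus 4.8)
The plan is to verify the optimality of $\mu^* = \mu_R$ for the weighted-sparsity basis pursuit problem \eqref{eq:f_star_solves_sparsity} directly, using the same structural idea that underlies the earlier results: the weight $\ws(x) = \|K\delta_x\|_{L^2(E)}$ is defined precisely so that it acts as a dual certificate for the constraint $K\mu = K\mu^*$. First I would compute $\|\mu^*\|_{\mathcal{M}_{\ws}(\Omega)}$ exactly. Since $\mu^* = \mu_R$ has total variation measure $|\mu^*| = \mathcal{L}^n \restriction R$, we get
\begin{equation*}
    \|\mu^*\|_{\mathcal{M}_{\ws}(\Omega)} = \int_R \ws(x)\,dx.
\end{equation*}
Using \eqref{eq:parallel_true_measure}, $K\delta_z = \tau(z) K\delta_{\bar x}$ with $\tau(z) > 0$ for all $z \in R$, so $\ws(z) = \|K\delta_z\|_{L^2(E)} = \tau(z)\,\|K\delta_{\bar x}\|_{L^2(E)} = \tau(z)\,\ws(\bar x)$ on $R$. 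Hence $\|\mu^*\|_{\mathcal{M}_{\ws}(\Omega)} = \ws(\bar x)\int_R \tau(z)\,dz$.

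Next I would express $K\mu^*$ in the same "parallel" form and relate its norm to the above. Writing $K\mu^* = \int_R k(x,\cdot)\,dx = \int_R K\delta_x\,dx = \left(\int_R \tau(z)\,dz\right) K\delta_{\bar x}$, we obtain
\begin{equation*}
    \|K\mu^*\|_{L^2(E)} = \left(\int_R \tau(z)\,dz\right)\|K\delta_{\bar x}\|_{L^2(E)} = \left(\int_R \tau(z)\,dz\right)\ws(\bar x) = \|\mu^*\|_{\mathcal{M}_{\ws}(\Omega)}.
\end{equation*}
So the true source attains equality between its weighted Radon norm and the $L^2$-norm of its image — the measure-theoretic analogue of \eqref{eq:assume_alfred1}. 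Then, for an arbitrary competitor $\mu$ with $K\mu = K\mu^*$, I would invoke the basic norm inequality for the kernel operator: by Minkowski's integral inequality (or directly from the definition of the weighted norm via the Hahn–Jordan decomposition of $\mu$),
\begin{equation*}
    \|K\mu\|_{L^2(E)} = \left\|\int_\Omega k(x,\cdot)\,d\mu(x)\right\|_{L^2(E)} \leq \int_\Omega \|k(x,\cdot)\|_{L^2(E)}\,d|\mu|(x) = \int_\Omega \ws(x)\,d|\mu|(x) = \|\mu\|_{\mathcal{M}_{\ws}(\Omega)}.
\end{equation*}
Chaining these together gives $\|\mu^*\|_{\mathcal{M}_{\ws}(\Omega)} = \|K\mu^*\|_{L^2(E)} = \|K\mu\|_{L^2(E)} \leq \|\mu\|_{\mathcal{M}_{\ws}(\Omega)}$, which is exactly the claimed minimality.

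The main obstacle is making the Minkowski-type inequality rigorous for a general (signed, possibly non-$\sigma$-finite-free) Radon measure $\mu$ rather than for an absolutely continuous one: one must justify that $x \mapsto k(x,\cdot)$ is Bochner integrable against $|\mu|$ and that the vector-valued triangle inequality $\|\int k(x,\cdot)\,d\mu(x)\|_{L^2(E)} \le \int \|k(x,\cdot)\|_{L^2(E)}\,d|\mu|(x)$ holds in that generality. This should follow from the measurability footnote already stated (the map $x \mapsto \|K\delta_x\|_{L^2(E)}$ is measurable) together with the uniform bounds $0 < \ws_{\min} \le \ws \le \ws_{\max} < \infty$, which guarantee the integrand is in $L^1(|\mu|)$ whenever $\mu$ has finite total variation; the Bochner-integrability argument is the same one used at the end of the proof of Theorem \ref{thm:jump_pursuit_rigorous_proof}. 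A minor point to note in passing is that the minimizer need not be unique — any $\mu$ whose decomposition saturates the Minkowski inequality and matches the data is also optimal — but the statement only claims membership in the argmin, so this causes no difficulty.
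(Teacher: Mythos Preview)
Your proposal is correct and follows essentially the same route as the paper's own proof: compute $\|\mu^*\|_{\mathcal{M}_{\ws}(\Omega)}$ and $\|K\mu^*\|_{L^2(E)}$ explicitly via the parallel-image assumption \eqref{eq:parallel_true_measure} to see they coincide, then apply Minkowski's integral inequality to any feasible $\mu$ to obtain $\|\mu^*\|_{\mathcal{M}_{\ws}(\Omega)} = \|K\mu\|_{L^2(E)} \le \|\mu\|_{\mathcal{M}_{\ws}(\Omega)}$. Your added remarks on Bochner integrability and non-uniqueness go slightly beyond what the paper spells out but do not alter the argument.
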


\begin{proof}
    We first observe that the true source $\mu^* = \mu_R$ satisfies 
    \begin{align*}
        \| \mu^* \|_{\mathcal{M}_{\ws}(\Omega)} &= \int_\Omega \ws(x)d\mu_R(x) 
        = \int_R \ws(x)dx \\
        &= \int_R ||K\delta_x\|_{L^2(E)} dx \\
        &= \int_R \tau(x) dx~\|K\delta_{\bar{x}}\|_{L^2(E)},
    \end{align*}
where we used \eqref{eq:parallel_true_measure} for the last line,   and
    \begin{align*}
        \| K \mu^* \|_{L^2(E)} &= \left\| \int_\Omega k(x,\cdot)d\mu_R(x) \right\|_{L^2(E)}  
        = \left\| \int_R k(x,\cdot)dx\right\|_{L^2(E)} \\
        &= \left\| \int_R \int_\Omega k(z,\cdot)d\delta_x(z) dx\right\|_{L^2(E)} \\
        &= \left\| \int_R K\delta_x dx\right\|_{L^2(E)} \\
       &= \left\| \int_R \tau(x) dx ~K\delta_{\bar{x}}\right\|_{L^2(E)} \\
        &= \int_R \tau(x) dx~\|K\delta_{\bar{x}}\|_{L^2(E)}.
    \end{align*}
    Therefore, for any $\mu \in \mathcal{M}(\Omega)$ satisfying $K \mu = K \mu^*$,
    \begin{align*}
        \| \mu^* \|_{\mathcal{M}_{\ws}(\Omega)} &
        = \|K\mu^*\|_{L^2(E)}  
        = \| K\mu \|_{L^2(E)}  \\
        &= \left\| \int_\Omega k(x,\cdot) d\mu(x) \right\|_{L^2(E)} \\
        &\leq \int_\Omega \|K\delta_x\|_{L^2(E)} |d\mu(x)| \\
        &= \int_\Omega \ws(x) |d\mu(x)| \\
        &=\| \mu \|_{\mathcal{M}_{\ws}(\Omega)},  
    \end{align*}
    where we have used the relation $K\delta_x = k(x,\cdot)$ and also Minkowski's integral inequality. This completes the proof.  
\end{proof}

\begin{remark}
    If \eqref{eq:parallel_true_measure} holds, then it follows in a straightforward manner that $\hat{\mu} = \int_R \tau(x) \, dx \, \delta_{\bar{x}}$ also solves the minimization problem \eqref{eq:f_star_solves_sparsity}, i.e., $K \hat(\mu) = K \mu^*$ and $\| \hat{\mu} \|_{\mathcal{M}_{\ws}(\Omega)} = \| \mu^* \|_{\mathcal{M}_{\ws}(\Omega)}$. Hence, if one wants to identify a "blocky" structure while preventing underestimation of the size of small sources, it might be beneficial to combine sparsity methodologies with TV-regularization, which we now investigate further.    
\end{remark}

It is well known that any $f \in BV(\Omega) \subset L^1(\Omega)$ introduces a measure by
\begin{equation} \label{eq:BVfunction_yields_measure}
    \mu(A) = \int_A f(x)dx,
\end{equation}
where $A \subset \Omega$ is any Lebesgue measurable set, cf. \eqref{eq:radonnikodym} that expresses this connection between the true measure $\mu^*=\mu_R$ and $f^* = \chi_R$. ($f$ and $f^*$ are in this context Radon–Nikodym derivatives.) In the sequel, we will use the set 
\begin{equation*}
    \Mu= \left\{ \mu \in \mathcal{M}(\Omega) \, \left| \, f = \frac{d \mu}{dx} \right. \in BV(\Omega) \right\},
\end{equation*}
and, when suitable, write $f \anti$ for $\mu$, i.e., $$f \anti = \mu.$$ 
We also note that, in the following two propositions, we rely on the notation to distinguish between a measure $\mu_\alpha$ (or the optimal $\mu^*$) and the associated BV-function $f_\alpha$ (or the optimal $f^*$). 

Let us verify that including weighted TV-regularization reduces the weighted TV-semi-norm of the solution:  
\begin{lemma} \label{prop:f_gamma_bounds}
    Assume that $\mu^* = \mu_R$, that \eqref{eq:parallel_true_measure} holds and let
\begin{equation} \label{def:f_gamma}
    \mu_\alpha \in \argmin_{\mu \in \Mu} \left\{ \alpha TV_w(f)+\| \mu \|_{\mathcal{M}_{\ws} (\Omega)} \right\} \quad \textnormal{subject to} \quad K \mu = K \mu^*,
\end{equation} 
where $f$ is the BV-function associated with $\mu$. 
Then, for $\alpha \geq 0$,  
\begin{equation*}
    TV_w(f_\alpha) \leq TV_w(f^*)
\end{equation*}
and 
\begin{equation*}
     0 \leq \| \mu_\alpha \|_{\mathcal{M}_{\ws} (\Omega)} - \| \mu^* \|_{\mathcal{M}_{\ws} (\Omega)} 
    \leq \alpha TV_w(f^*). 
\end{equation*}
Here, $f_\alpha$ and $f^*$ denote the Radon–Nikodym derivatives of $\mu_\alpha$ and $\mu^*$, respectively. 
\end{lemma}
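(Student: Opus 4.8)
The plan is to exploit the two variational problems side by side: the pure weighted-sparsity basis pursuit problem of Proposition~\ref{prop:f_star_solves_sparsity}, and the combined problem \eqref{def:f_gamma}. Both have the same constraint set $\{\mu : K\mu = K\mu^*\}$, which is nonempty (it contains $\mu^*$) and, since $f^* = \chi_R \in BV(\Omega)$, also contains $\mu^* \in \Mu$. So $\mu^*$ is admissible for \eqref{def:f_gamma}, and we may compare the optimal value at $\mu_\alpha$ against the value at $\mu^*$.

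First I would establish the TV bound. Since $\mu_\alpha$ minimizes $\alpha TV_w(f) + \|\mu\|_{\mathcal{M}_{\ws}(\Omega)}$ over the constraint set and $\mu^*$ lies in that set,
\begin{equation*}
    \alpha TV_w(f_\alpha) + \|\mu_\alpha\|_{\mathcal{M}_{\ws}(\Omega)} \leq \alpha TV_w(f^*) + \|\mu^*\|_{\mathcal{M}_{\ws}(\Omega)}.
\end{equation*}
By Proposition~\ref{prop:f_star_solves_sparsity}, $\mu^*$ minimizes $\|\cdot\|_{\mathcal{M}_{\ws}(\Omega)}$ subject to the same constraint; since $\mu_\alpha$ satisfies that constraint, $\|\mu^*\|_{\mathcal{M}_{\ws}(\Omega)} \leq \|\mu_\alpha\|_{\mathcal{M}_{\ws}(\Omega)}$, which gives the lower bound $0 \leq \|\mu_\alpha\|_{\mathcal{M}_{\ws}(\Omega)} - \|\mu^*\|_{\mathcal{M}_{\ws}(\Omega)}$ for free (valid for every $\alpha \geq 0$, and not needing $\mu_\alpha \in \Mu$ beyond the fact that it is a competitor). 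Substituting this into the displayed inequality, the $\|\mu_\alpha\|$ term dominates $\|\mu^*\|$, so $\alpha TV_w(f_\alpha) \leq \alpha TV_w(f^*) + (\|\mu^*\|_{\mathcal{M}_{\ws}(\Omega)} - \|\mu_\alpha\|_{\mathcal{M}_{\ws}(\Omega)}) \leq \alpha TV_w(f^*)$; dividing by $\alpha$ when $\alpha > 0$ yields $TV_w(f_\alpha) \leq TV_w(f^*)$, and the case $\alpha = 0$ is either vacuous or handled by noting $f^*=\chi_R$ is itself a minimizer. Rearranging the same chain of inequalities the other way gives $\|\mu_\alpha\|_{\mathcal{M}_{\ws}(\Omega)} - \|\mu^*\|_{\mathcal{M}_{\ws}(\Omega)} \leq \alpha(TV_w(f^*) - TV_w(f_\alpha)) \leq \alpha TV_w(f^*)$, since $TV_w(f_\alpha) \geq 0$. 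That closes both inequalities.

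The only genuine subtlety — the part I would be most careful about — is existence of the minimizer $\mu_\alpha$ and the admissibility of $\mu^*$ in the restricted class $\Mu$ (as opposed to all of $\mathcal{M}(\Omega)$, where Proposition~\ref{prop:f_star_solves_sparsity} lives). Admissibility of $\mu^*$ is fine: $R$ has finite perimeter so $\chi_R \in BV(\Omega)$ and hence $\mu^* = \chi_R\,\anti \in \Mu$. For the comparison argument itself I do not actually need to re-prove existence here if the statement is read as "any minimizer $\mu_\alpha$ satisfies..."; but if existence must be addressed, I would invoke the standard direct method — weak-$*$ compactness of sublevel sets in $\mathcal{M}(\Omega)$ together with lower semicontinuity of $TV_w$ and of $\|\cdot\|_{\mathcal{M}_{\ws}(\Omega)}$, plus closedness of the constraint under weak-$*$ convergence since $K$ is weak-$*$-to-weak continuous — taking care that the BV-constraint defining $\Mu$ is preserved in the limit, which again follows from lower semicontinuity of total variation. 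Everything else is the short two-line sandwiching above.
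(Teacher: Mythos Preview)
Your proof is correct and follows essentially the same approach as the paper: both combine the optimality of $\mu_\alpha$ in \eqref{def:f_gamma} with the optimality of $\mu^*$ from Proposition~\ref{prop:f_star_solves_sparsity} into the sandwich inequality $\alpha TV_w(f_\alpha)+\|\mu^*\|_{\mathcal{M}_{\ws}(\Omega)} \leq \alpha TV_w(f_\alpha)+\|\mu_\alpha\|_{\mathcal{M}_{\ws}(\Omega)} \leq \alpha TV_w(f^*)+\|\mu^*\|_{\mathcal{M}_{\ws}(\Omega)}$, from which both conclusions drop out. Your extra remarks on admissibility of $\mu^*\in\Mu$ and the $\alpha=0$ edge case are reasonable side comments but not needed for the core argument.
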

\begin{proof}
It follows from \eqref{eq:f_star_solves_sparsity} and \eqref{def:f_gamma} that 
\begin{equation*}
    \alpha TV_w(f_\alpha)+\| \mu^* \|_{\mathcal{M}_{\ws} (\Omega)} \leq \alpha TV_w(f_\alpha)+\| \mu_\alpha \|_{\mathcal{M}_{\ws} (\Omega)} \leq \alpha TV_w(f^*)+\| \mu^* \|_{\mathcal{M}_{\ws} (\Omega)},  
\end{equation*}
which implies that 
\begin{equation*}
    TV_w(f_\alpha) \leq TV_w(f^*) 
\end{equation*}
and that 
\begin{equation*}
     0 \leq \| \mu_\alpha \|_{\mathcal{M}_{\ws} (\Omega)} - \| \mu^* \|_{\mathcal{M}_{\ws} (\Omega)} 
    \leq \alpha TV_w(f^*) - \alpha TV_w(f_\alpha), 
\end{equation*}    
which completes the proof. 
\end{proof}

We next explain that, if \eqref{eq:f_star_solves_sparsity} has several solutions, one might want to add weighted TV-regularization in order to choose/compute a "blocky" solution. 
\begin{proposition} \label{prop:f_gamma_converges}
    Let $\mu_\alpha$, with Radon-Nikodym derivative $f_\alpha$, be as defined in \eqref{def:f_gamma}. 
    Suppose the true source $\mu^*$ satisfies \eqref{eq:positive_true_measure} and \eqref{eq:parallel_true_measure}, and that its associated Radon-Nikodym derivative $f^*$ is the unique solution to the optimization problem 
    \begin{equation} \label{eq:unique_TV_min_meas_solution}
        f^* = \argmin_{f \in S} TV_w(f), 
    \end{equation}
    where 
    \begin{equation*}
    S = \argmin_{f \in BV(\Omega)} \| f \anti \|_{\mathcal{M}_{\ws} (\Omega)} \quad \textnormal{subject to} \quad K\! f \anti  = K\! f^* \anti. 
\end{equation*}
Then 
\begin{equation*}
    \lim_{\alpha \rightarrow 0} f_\alpha = f^*, \quad \textnormal{in } L^1(\Omega). 
\end{equation*}
\end{proposition}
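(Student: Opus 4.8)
The plan is to prove this by a standard $\Gamma$-convergence / direct-method argument for the minimizers $f_\alpha$ as $\alpha \to 0$, exploiting the two bounds from Lemma \ref{prop:f_gamma_bounds} together with the uniqueness hypothesis \eqref{eq:unique_TV_min_meas_solution}. First I would observe that $f^*$ itself is admissible in \eqref{def:f_gamma} (since $K\!f^*\anti = K\!f^*\anti$), so the minimum value is finite, and Lemma \ref{prop:f_gamma_bounds} gives the uniform bound $TV_w(f_\alpha) \le TV_w(f^*)$ for all $\alpha \ge 0$, as well as $\|\mu_\alpha\|_{\mathcal{M}_{\ws}(\Omega)} \le \|\mu^*\|_{\mathcal{M}_{\ws}(\Omega)} + \alpha\, TV_w(f^*)$. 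Since $\ws \ge \ws_{\min} > 0$, the latter controls $\|\mu_\alpha\|_{\mathcal{M}(\Omega)} = \|f_\alpha\|_{L^1(\Omega)}$ uniformly for $\alpha$ in a bounded range; combined with the $TV_w$-bound and the weight lower bounds $w_i \ge$ const $> 0$ (which should be noted or assumed, giving control of the unweighted $TV(f_\alpha)$), we get that $\{f_\alpha\}$ is bounded in $BV(\Omega)$.

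Next I would extract, from an arbitrary sequence $\alpha_n \to 0$, a subsequence along which $f_{\alpha_n} \to \hat f$ strongly in $L^1(\Omega)$ by the compact embedding $BV(\Omega) \hookrightarrow\hookrightarrow L^1(\Omega)$. Then I would identify $\hat f$: continuity of $K$ (as an operator on measures, with $f_{\alpha_n}\anti \to \hat f\anti$ weakly-$*$, or directly from $L^1$ convergence and boundedness of the kernel) gives $K\hat f\anti = K\!f^*\anti$, so $\hat f$ is feasible for the constraint defining $S$. Lower semicontinuity of $\mu \mapsto \|\mu\|_{\mathcal{M}_{\ws}(\Omega)}$ under weak-$*$ convergence, together with the bound $\|\mu_{\alpha_n}\|_{\mathcal{M}_{\ws}} \le \|\mu^*\|_{\mathcal{M}_{\ws}} + \alpha_n TV_w(f^*) \to \|\mu^*\|_{\mathcal{M}_{\ws}}$, forces $\|\hat f\anti\|_{\mathcal{M}_{\ws}(\Omega)} \le \|\mu^*\|_{\mathcal{M}_{\ws}(\Omega)}$; but $\mu^*$ is a minimizer of the weighted mass subject to the constraint (Proposition \ref{prop:f_star_solves_sparsity}), so in fact equality holds and $\hat f \in S$. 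Then lower semicontinuity of $TV_w$ under $L^1$-convergence gives $TV_w(\hat f) \le \liminf_n TV_w(f_{\alpha_n}) \le TV_w(f^*)$; since $f^* \in S$ and $f^*$ is the \emph{unique} minimizer of $TV_w$ over $S$ by \eqref{eq:unique_TV_min_meas_solution}, we conclude $\hat f = f^*$.

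Finally I would invoke the usual subsequence principle: every sequence $\alpha_n \to 0$ has a further subsequence along which $f_{\alpha_n} \to f^*$ in $L^1(\Omega)$, and since the limit is always the same $f^*$, the full family converges, $\lim_{\alpha \to 0} f_\alpha = f^*$ in $L^1(\Omega)$.

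The main obstacle I anticipate is the lower semicontinuity of the weighted functionals under the available mode of convergence, and in particular making sure the two notions of convergence are compatible: from the $BV$-bound we get $f_{\alpha_n} \to \hat f$ strongly in $L^1$, hence $f_{\alpha_n}\anti \to \hat f\anti$ weakly-$*$ as measures, so $\mu \mapsto \|\mu\|_{\mathcal{M}_{\ws}(\Omega)}$ is weak-$*$ lsc provided $\ws$ is, say, lower semicontinuous (it is continuous here, being $x \mapsto \|K\delta_x\|$ under mild assumptions, as used implicitly elsewhere); and $TV_w$ is $L^1$-lsc because it is a supremum of the linear functionals $f \mapsto \int_\Omega f\,\mathrm{div}\,\phi$ over $\phi \in \Dw$, cf. \eqref{eq:TVw}. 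A secondary technical point is justifying $K\hat f\anti = K\!f^*\anti$ in the limit; this is immediate if $K$ is weak-$*$-to-weak continuous on bounded sets of $\mathcal{M}(\Omega)$ (true for the kernel operators considered here, under a mild continuity/boundedness assumption on $k$), or one can argue directly that $\|K f_{\alpha_n}\anti - K\!f^*\anti\|_{L^2(E)} = \|K\mu_{\alpha_n} - K\mu^*\|_{L^2(E)} = 0$ since each $f_{\alpha_n}$ satisfies the constraint exactly.
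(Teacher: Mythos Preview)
Your proposal is correct and follows essentially the same route as the paper: uniform $BV$-bound from Lemma~\ref{prop:f_gamma_bounds}, compactness in $L^1$, passage to the limit in the constraint and in both functionals, identification of the limit via the uniqueness hypothesis, and the subsequence principle. You are in fact slightly more careful than the paper in two places: you flag the need for a positive lower bound on the $w_i$ to get an unweighted $TV$ bound (the paper writes $\|f_\alpha\|_{BV}=\|f_\alpha\|_{L^1}+TV_w(f_\alpha)$ without comment), and you argue via lower semicontinuity of $\|\cdot\|_{\mathcal{M}_{\ws}}$ whereas the paper tacitly uses that the strong $L^1$ convergence $f_{\alpha_k}\to\bar f$ together with $\ws\le\ws_{\max}$ gives continuity of $f\mapsto\int_\Omega \ws|f|$.
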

\begin{proof}
    It follows from \eqref{def:f_gamma} that
    \begin{equation*}
        \|f_\alpha\|_{BV(\Omega)} = \|f_\alpha\|_{L^1{(\Omega})} + TV_w(f_\alpha) \leq \frac{1}{\ws_{\min}}\|\mu^*\|_{\mathcal{M}_{\ws} (\Omega)} + TV_w(f^*),
    \end{equation*}
    which implies that $\{ f_\alpha \}_{\alpha > 0} \subset BV(\Omega)$ is uniformly bounded. Consequently, Rellich’s compactness theorem implies that we can extract a sequence $(f_{\alpha_k})$, $\alpha_k \rightarrow 0$, such that
    \begin{equation*}
        f_{\alpha_k} \rightarrow \bar{f} \quad \textnormal{in } L^1(\Omega),
    \end{equation*}
    for some $\bar{f} \in BV(\Omega)$ with associated measure $\bar{\mu} = \bar{f} \anti$. 

    
    It follows from \eqref{def:f_gamma} that $K \mu_\alpha = K \mu^*$ and hence $K \mu_{\alpha_k} = K \mu^*$ for all $k$.  
    Furthermore, due to the continuity of the forward operator $K: \mathcal{M}(\Omega) \rightarrow L^p(E)$, we find that 
    \begin{equation} \label{eq:feasible1}
        K\! \bar{f} \anti = \lim_{k\rightarrow \infty} K\! f_{\alpha_k} \anti =\lim_{k\rightarrow \infty} K \mu_{\alpha_k} = K \mu^* = K\! f^* \anti. 
    \end{equation}

    Next, Lemma \ref{prop:f_gamma_bounds} yields that
    \begin{equation*}
     0 \leq \| \mu_{\alpha_k} \|_{\mathcal{M}_{\ws} (\Omega)} - \| \mu^* \|_{\mathcal{M}_{\ws} (\Omega)} 
    \leq \alpha_k TV_w(f^*) 
    \end{equation*}
    or 
    \begin{equation*}
     0 \leq \| f_{\alpha_k} \anti \|_{\mathcal{M}_{\ws} (\Omega)} - \| f^* \anti \|_{\mathcal{M}_{\ws} (\Omega)} 
    \leq \alpha_k TV_w(f^*),  
    \end{equation*}
    i.e., since $\alpha_k \rightarrow 0$, 
    \begin{equation*} 
        \| \bar{f} \anti \|_{\mathcal{M}_{\ws}} = \| f^* \anti \|_{\mathcal{M}_{\ws}}, 
    \end{equation*}
    which combined with \eqref{eq:feasible1} show that $\bar{f}$ is a feasible solution of \eqref{eq:unique_TV_min_meas_solution}. 
    
    To show that $\bar{f}$ is optimal, we combine Lemma \ref{prop:f_gamma_bounds} with the fact that the total variation functional is lower semi-continuous with respect to $L^1$  to obtain the inequalities
    \begin{equation*}
        TV_w(f^*) \geq\liminf_{k} TV_w(f_{\alpha_k}) \geq TV_w(\bar{f}).
    \end{equation*}
    It follows that $\bar{f}$ is a solution of \eqref{eq:unique_TV_min_meas_solution}, and we can conclude that $\bar{f} = f^*$ because $f^*$ is assumed to be the unique minimizer of \eqref{eq:unique_TV_min_meas_solution}.

    This argument also shows that \textit{any} convergent sequence $(f_{\alpha_k}) \subset \{ f_\alpha \}_{\alpha > 0}$, $\alpha_k \rightarrow 0$, must converge to $f^*$ in $L^1(\Omega)$, which, due to the fact that $\{ f_\alpha \}_{\alpha > 0}$ is relative compact in $L^1(\Omega)$, will imply that $f_\alpha \rightarrow f^*$, as $\alpha \rightarrow 0$, in $L^1(\Omega)$. 
\end{proof}

\begin{remark}
    We remark that Lemma \ref{prop:f_gamma_bounds} and Proposition \ref{prop:f_gamma_converges} also are valid if one replaces $TV_w$ with standard total variation, denoted $TV_I$ below. The proofs are identical, replacing $TV_w$ with $TV_I$. 
\end{remark}

\section{Numerical experiments} \label{sec:numerical_experiments}
We will perform numerical experiments on the unit square $\Omega = (0,1)\times(0,1)$ with the forward operator $K:L^2(\Omega) \rightarrow L^2(\partial\Omega)$, $f \mapsto u|_{\partial\Omega}$, where $u \in H^1(\Omega)$ solves
\begin{eqnarray}
    -\nabla \cdot D\nabla u + u &=& f, \quad x \in \Omega, \label{eq:ellipticPDE} \\
    D\nabla u \cdot \mathbf{n} &=&0, \quad x \in \partial\Omega. \label{eq:neumannBC}
\end{eqnarray}
Unless otherwise stated, the conductivity is assumed to be isotropic, i.e., $D = 1$.

In all the experiments, the exact data was synthetically generated by applying $K$ to a true source $f^*$, yielding $$d^\dagger := Kf^*.$$ The data was then corrupted with additive white Gaussian noise to produce the noisy observations $$ d = d^\dagger + \epsilon,$$ where the noise $\epsilon$ was scaled to ensure a relative noise level of 1\%, i.e.,
\begin{equation*}
    \frac{\|\epsilon\|}{\|d\| } = 0.01.
\end{equation*}

When solving the discretized problem, we employed the iterative Bregman method presented in \cite{osher05}: 
\begin{align}
\label{eq:discrete_variational}
    \xx_{k+1} &= \argmin_\xx \left\{\frac{1}{2}\|\AAA\xx - \bb_k\|^2 + \alpha TV_w(\xx) + \beta \|\WW\xx\|_1\right\}, \\
    \nonumber
    \bb_{k+1} &= \bb_k + (\mathbf{d} - \AAA\xx_k),
\end{align}
where $\xx$, $\mathbf{d}$ and $\AAA$ represent discrete counterparts to the source $f$, the noisy data $d$ and the forward operator $K$, respectively. The minimization problem in the first step above was solved with the Alternating Direction Method of Multipliers (ADMM) algorithm. We used the standard Morozov discrepancy principle as stopping criterion for the procedure and performed experiments with both $\beta=0$ and $\beta > 0$. 

\begin{remark}
    In the discrete case, we use the $\| \cdot \|_1$-norm to include sparsity regularization and thus not the measure theoretical approach employed in the infinite dimensional setting considered in Section \ref{sec:hybrid}, cf. \eqref{def:hybrid_cost_func}. Nevertheless, results analogous to Proposition \ref{prop:f_star_solves_sparsity}, Lemma \ref{prop:f_gamma_bounds} and Proposition \ref{prop:f_gamma_converges} can be proven for \eqref{eq:discrete_variational}; using very similar arguments to those presented above. Here, $\WW$ is a diagonal matrix 
    $$
    \begin{bmatrix}
        \tilde{w}_1 & & \\ 
        & \ddots & \\
        & & \tilde{w}_n
    \end{bmatrix}
    $$
    with entries 
    \begin{equation*}
        \tilde{w}_i = \| \AAA \ee_i \|_2, \quad i=1,2,\ldots,n,
    \end{equation*}
    or 
    \begin{equation} \label{eq:discrete_weights_projection}
        \tilde{w}_i = \| \AAA_q^\dagger \AAA \ee_i \|_2,  \quad i=1,2,\ldots,n,
    \end{equation}
    where $\{ \ee_i \}$ are the standard Euclidean basis vectors and $\AAA_q^\dagger$ denotes an approximation of the pseudo inverse of $\AAA$, defined in terms of the singular vectors associated with the $q$ larges singular values of $\AAA$.  In our numerical experiments, we employed the form \eqref{eq:discrete_weights_projection} with $q=120$.
\end{remark}

As mentioned in Remark \ref{eq:Lp_defining_weights}, one may define the TV-weights in terms of any $L^p$-norm. Figure \ref{fig:TVweights} shows the weights computed with $p=1$ and $p=\infty$, and Figure \ref{fig:compare} exemplifies that the choice $p=\infty$ yields somewhat better results than using $p=1$, especially for small true sources. We therefore below present results computed with $p=\infty$, even though Theorem \ref{thm:disjointLines} presupposes $p=1$. The authors regard it as an open problem to formulate conditions similar to \eqref{eq:propG1} and \eqref{eq:propG2} such that Theorem \ref{thm:disjointLines} also holds for $p=\infty$.    

\begin{figure}[H]
    \centering
    \begin{subfigure}[t]{0.35\linewidth}        
        \centering
        \includegraphics[trim=40 30 40 30, clip, width=\linewidth]{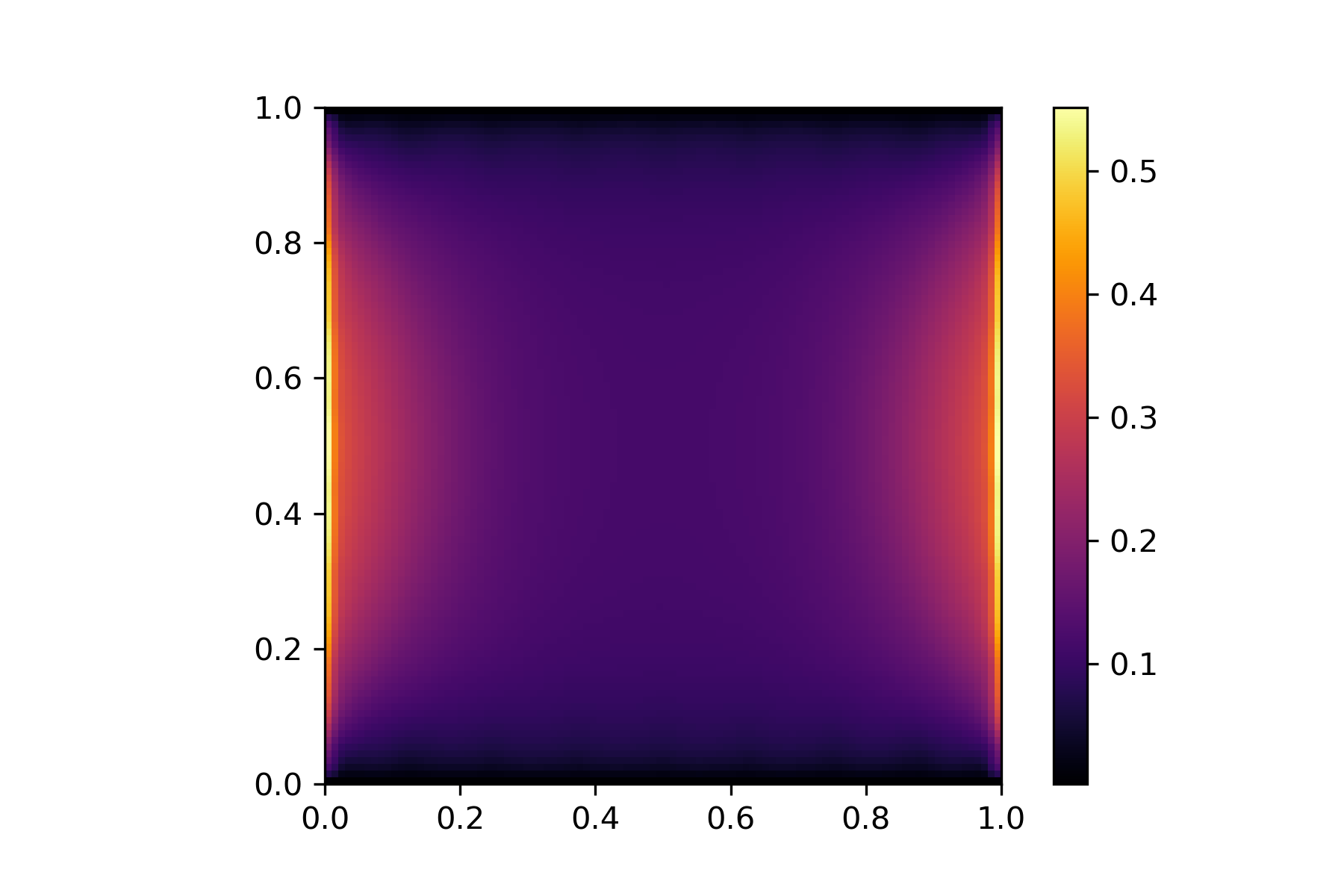}
        \caption{$i = 1, p = 1$}
    \end{subfigure}
    \begin{subfigure}[t]{0.35\linewidth}        
        \centering
        \includegraphics[trim=40 30 40 30, clip, width=\linewidth]{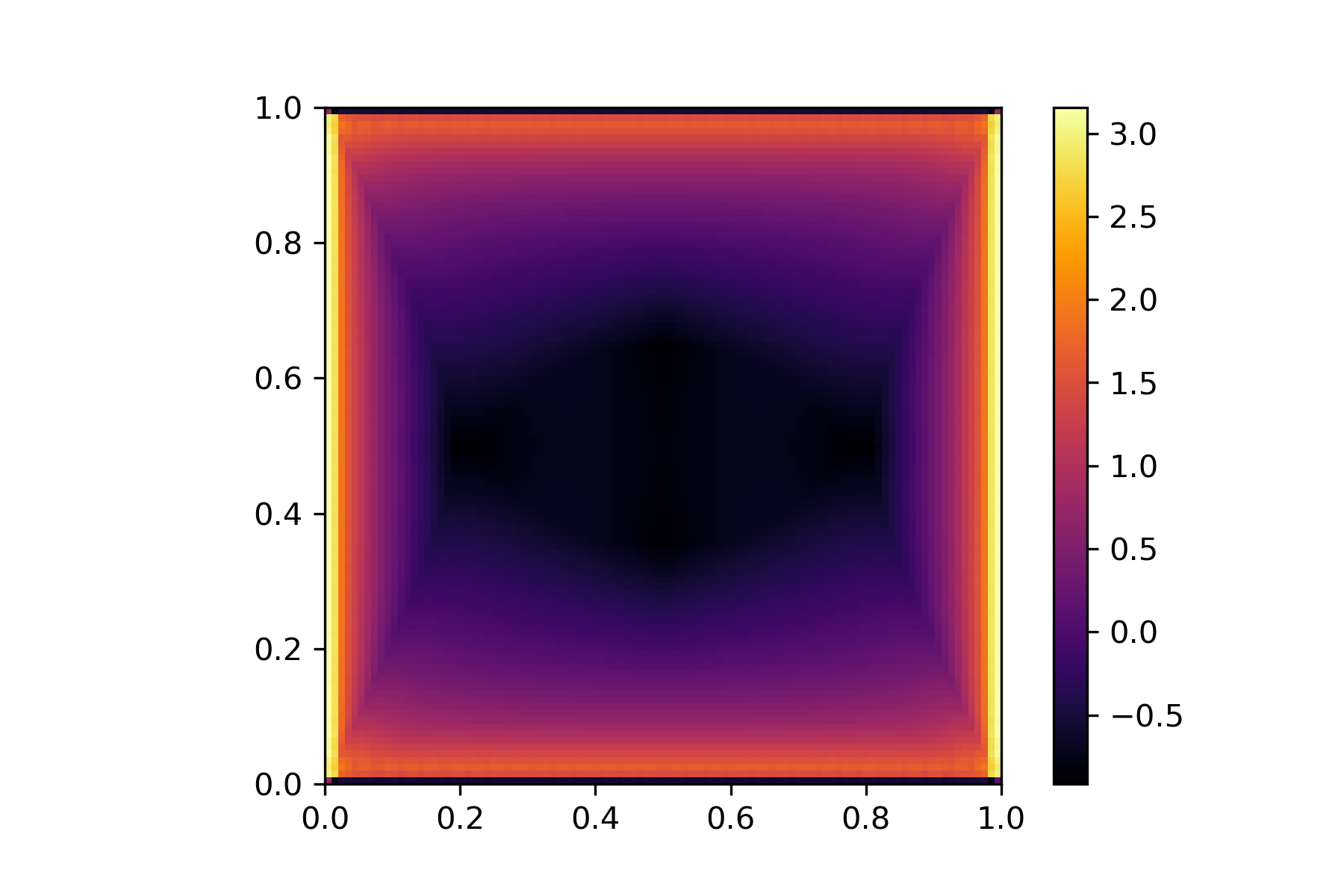}
        \caption{$i = 1, p = \infty$ (log plot)}
    \end{subfigure}\par
    \begin{subfigure}[t]{0.35\linewidth}        
        \centering
        \includegraphics[trim=40 30 40 30, clip, width=\linewidth]{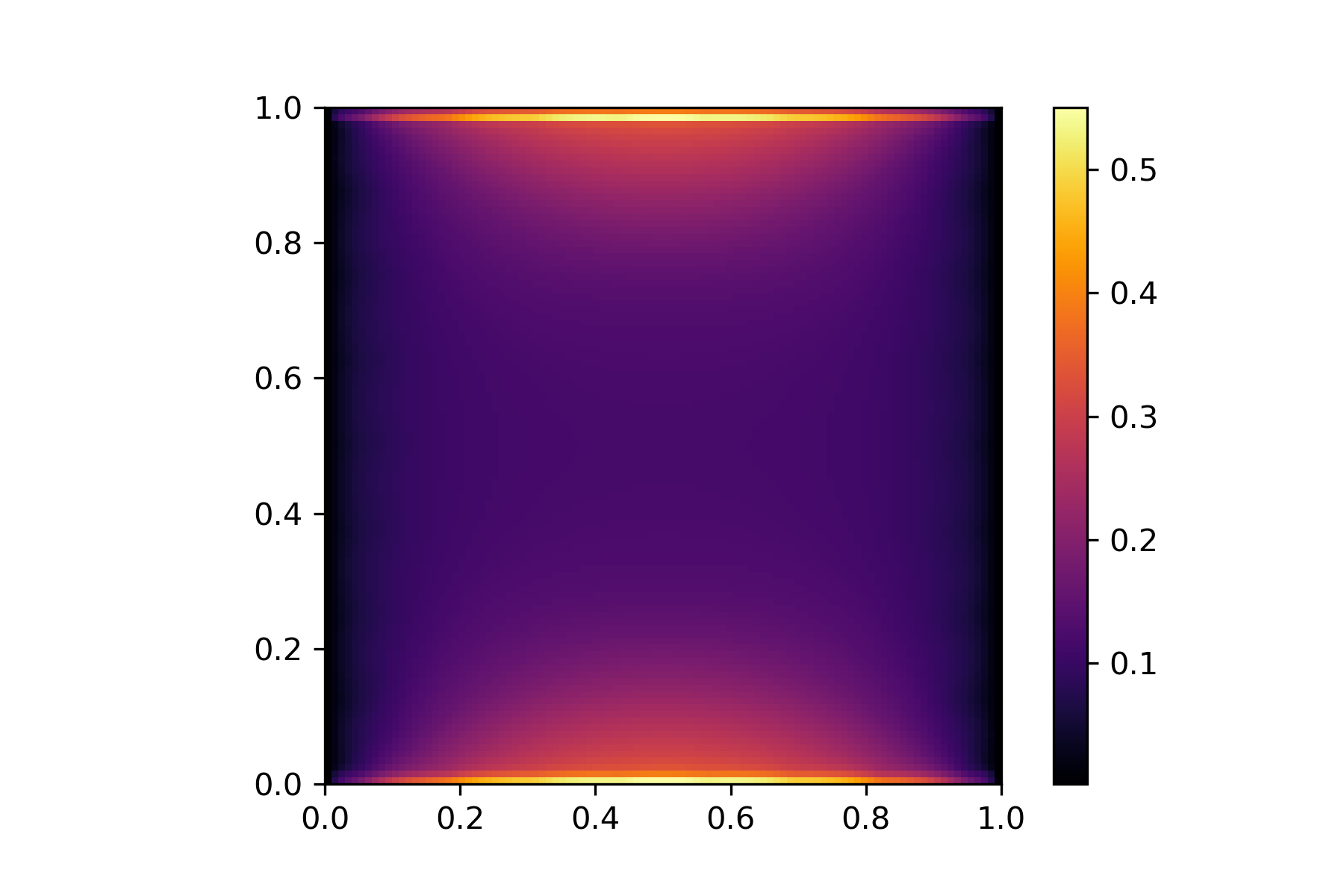}
        \caption{$i = 2, p = 1$}
    \end{subfigure}
    \begin{subfigure}[t]{0.35\linewidth}        
        \centering
        \includegraphics[trim=40 30 40 30, clip, width=\linewidth]{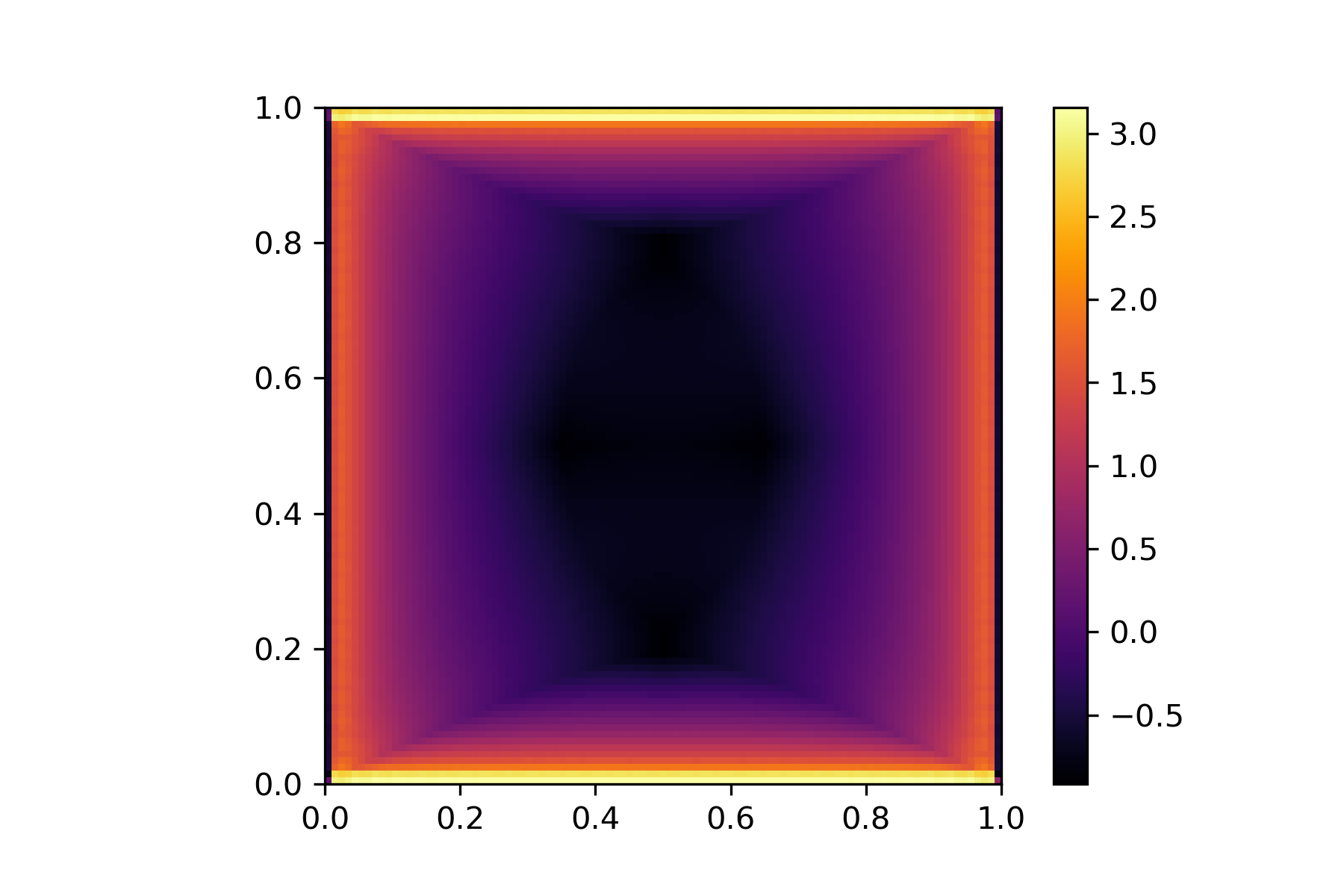}
        \caption{$i = 2, p = \infty$ (log plot)}
    \end{subfigure}\par
    \caption{Total variation weights $\left\|K\partial_{y_i}G(\cdot;y) \right\|_{L^p(E)}$ for $i= 1, 2$ and different values of $p$, cf. \eqref{eq:alfred2p}.}
    \label{fig:TVweights}
\end{figure}

\begin{figure}[H]
    \centering
    \begin{subfigure}[t]{0.32\linewidth}        
        \centering
        \includegraphics[trim=40 30 40 30, clip, width=\linewidth]{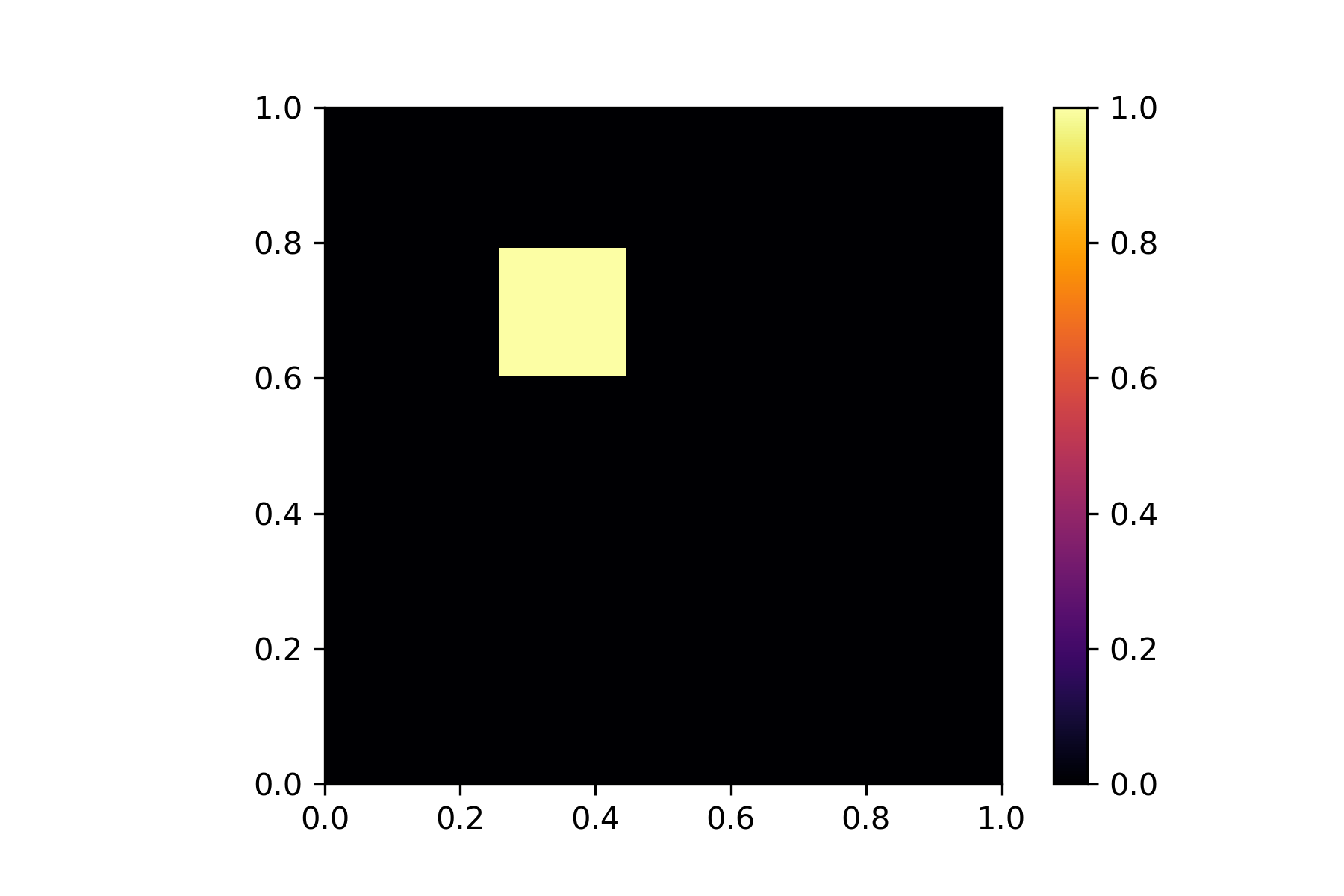}
        \caption{True source}
    \end{subfigure}
    \begin{subfigure}[t]{0.32\linewidth}        
        \centering
        \includegraphics[trim=40 30 40 30, clip, width=\linewidth]{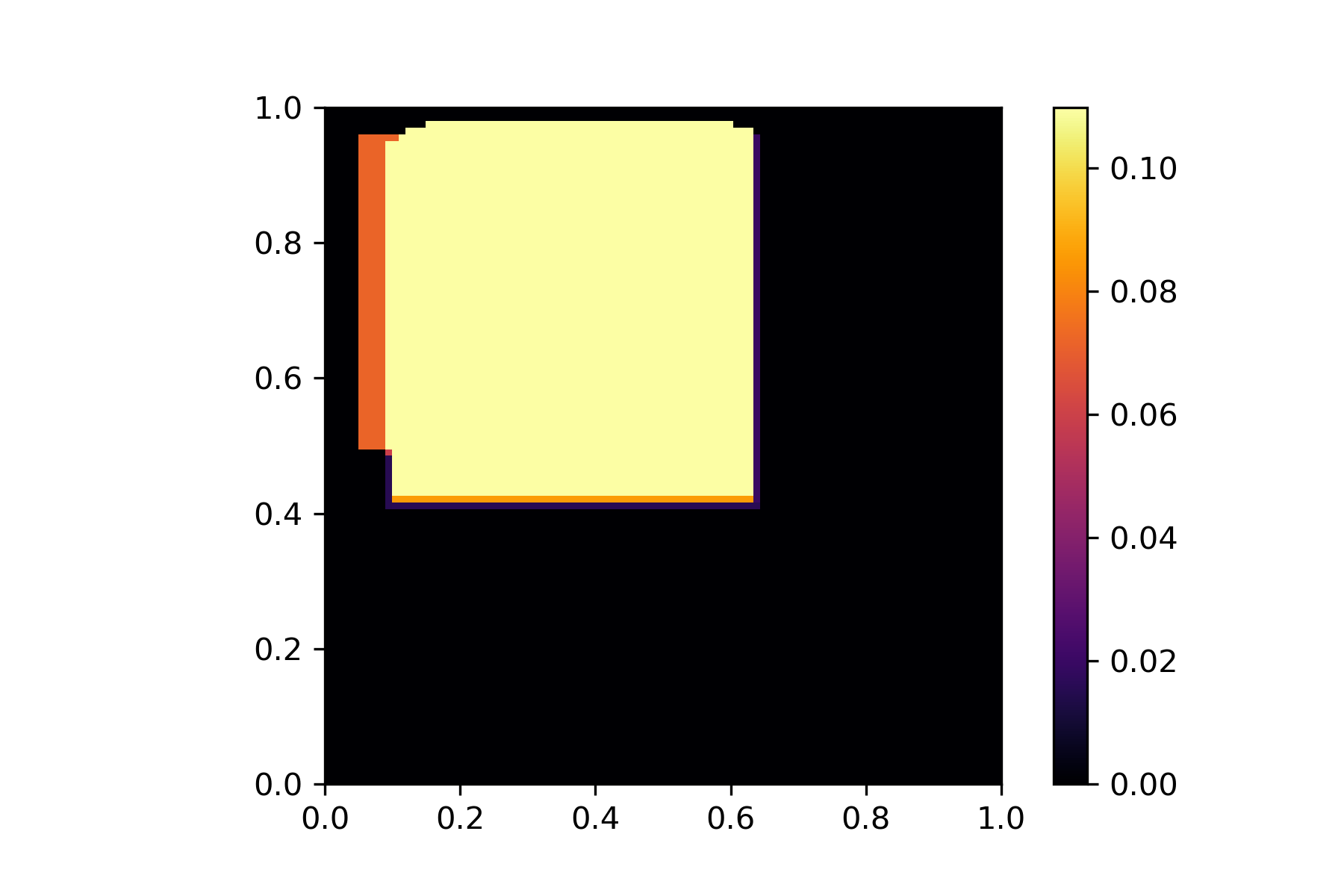}
        \caption{$p = 1$}
    \end{subfigure}
    \begin{subfigure}[t]{0.32\linewidth}        
        \centering
        \includegraphics[trim=40 30 40 30, clip, width=\linewidth]{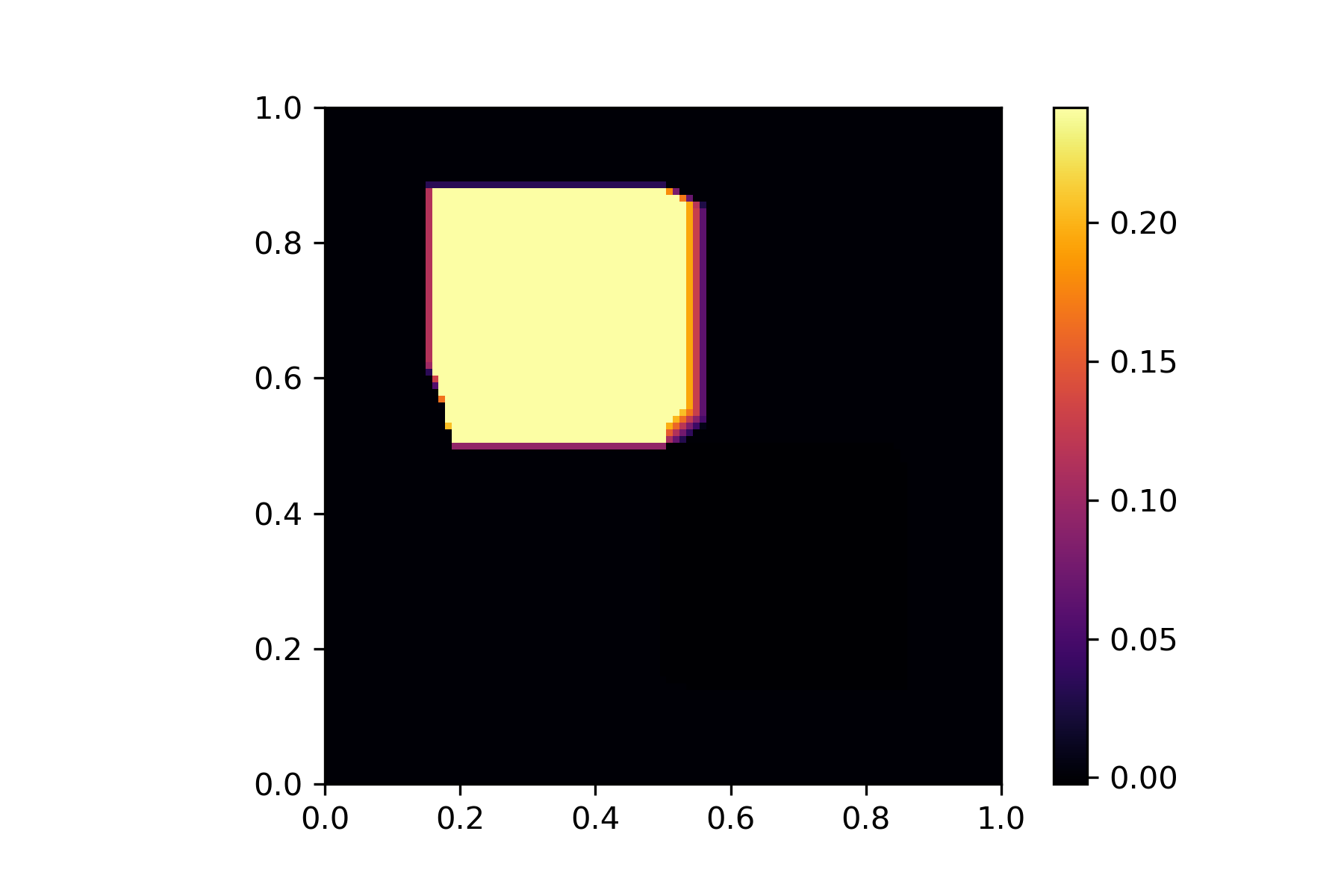}
        \caption{$p = \infty$}
    \end{subfigure}
    \caption{Comparison of the true source and inverse recoveries using weighted TV ($TV_w$) for different $p$-norms in the weights $\left\|K\partial_{y_i}G(\cdot;y) \right\|_{L^p(E)}$, cf. \eqref{eq:alfred2p}.}
    \label{fig:compare}
\end{figure}

\subsection*{Small rectangular source}
\captionsetup{justification=centering}
The first example concerns a small square-shaped (true) source positioned somewhat off the center of the grid, as shown in Figure \ref{fig:smallsource}(a). We can observe that the position of the source is recovered using both weighted TV and the hybrid method. However, the size of the true source is clearly overestimated by weighted TV approach, as seen in panel b). 

\begin{figure}[H]
    \centering
    \begin{subfigure}[t]{0.35\linewidth}        
        \centering
        \includegraphics[trim=40 30 40 30, clip, width=\linewidth]{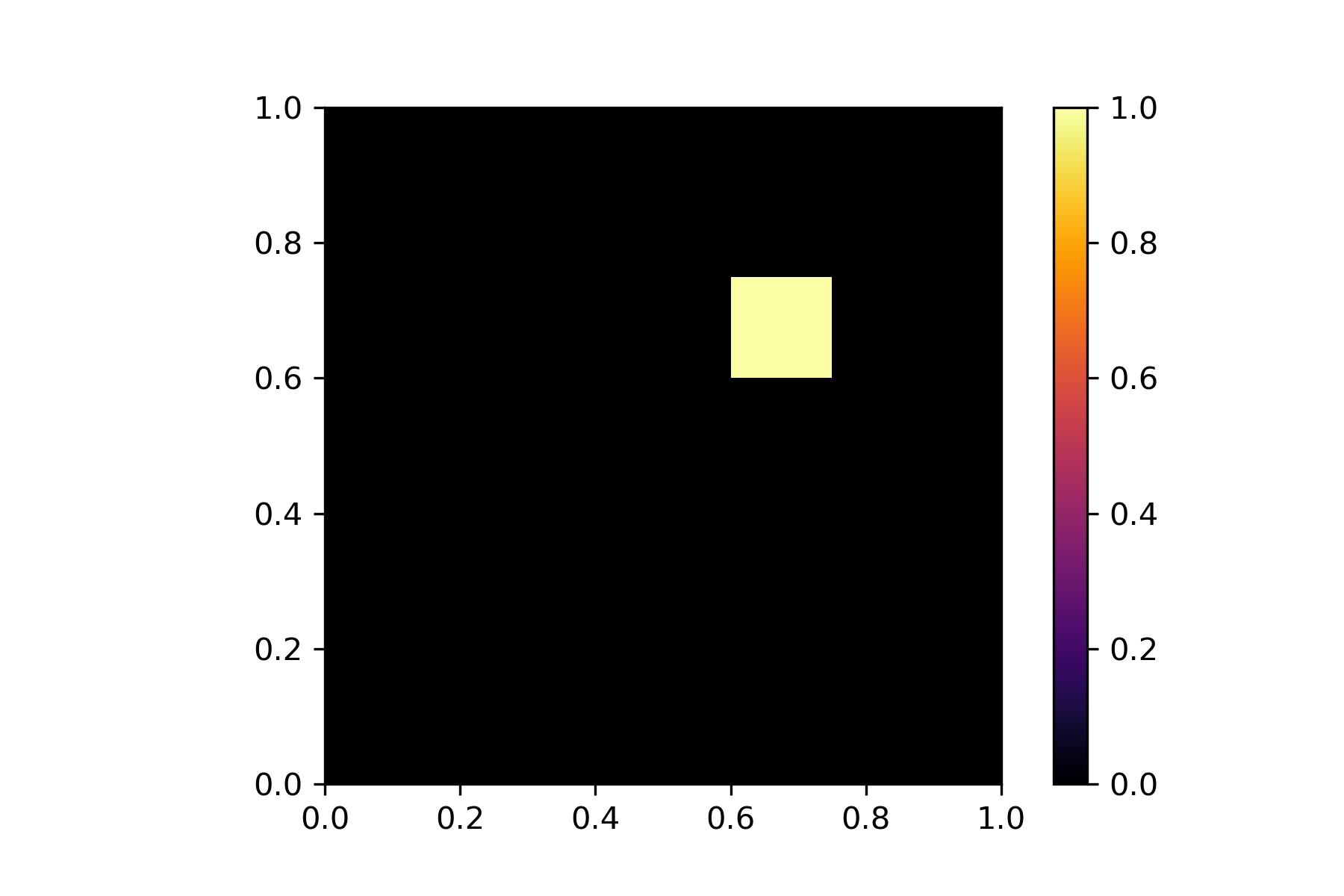}
        \caption{True small source}
    \end{subfigure}
    \begin{subfigure}[t]{0.35\linewidth}        
        \centering
        \includegraphics[trim=40 30 40 30, clip, width=\linewidth]{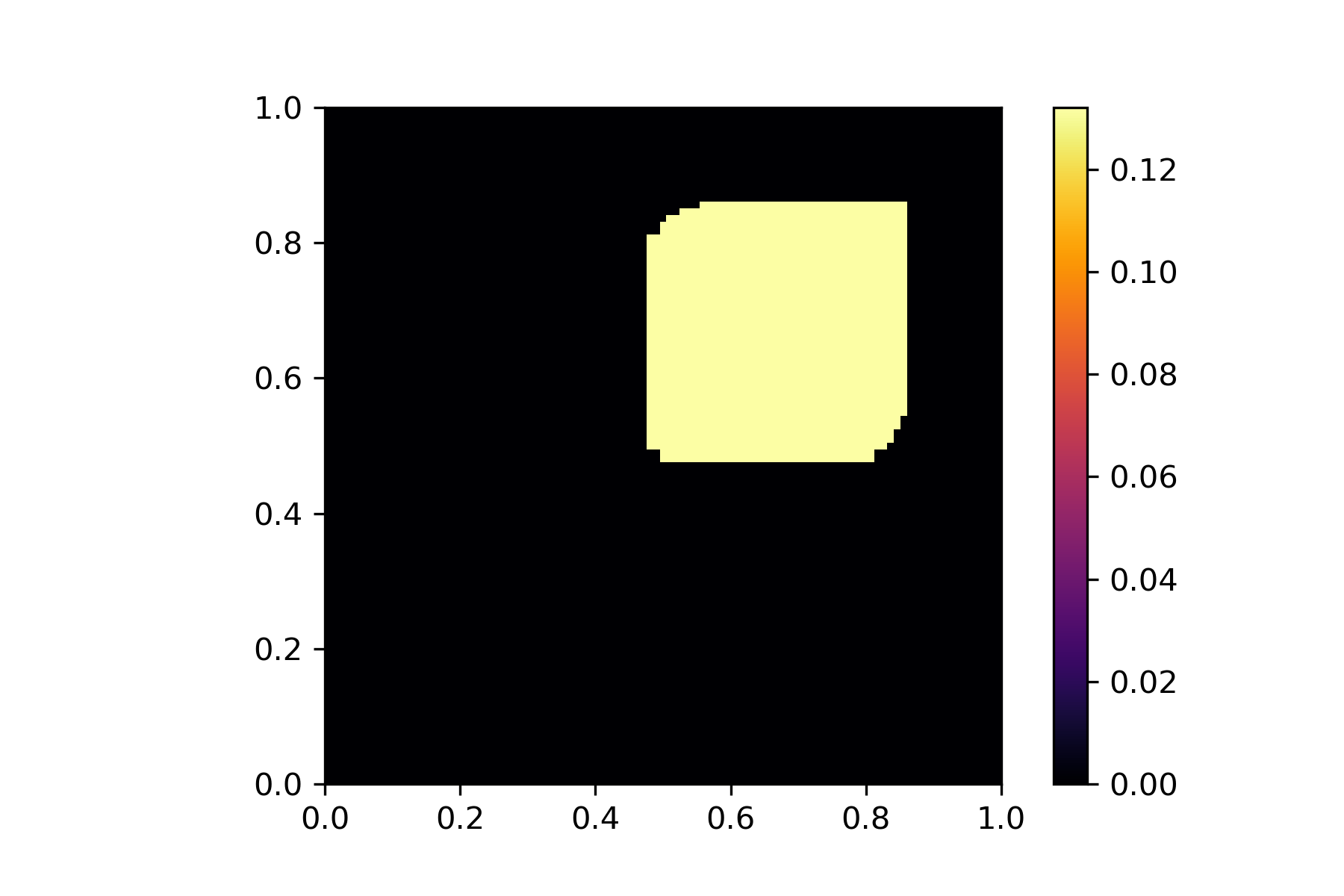}
        \caption{$\oTV_w$}
    \end{subfigure}
    \begin{subfigure}[t]{0.35\linewidth}        
        \centering
        \includegraphics[trim=40 30 40 30, clip, width=\linewidth]{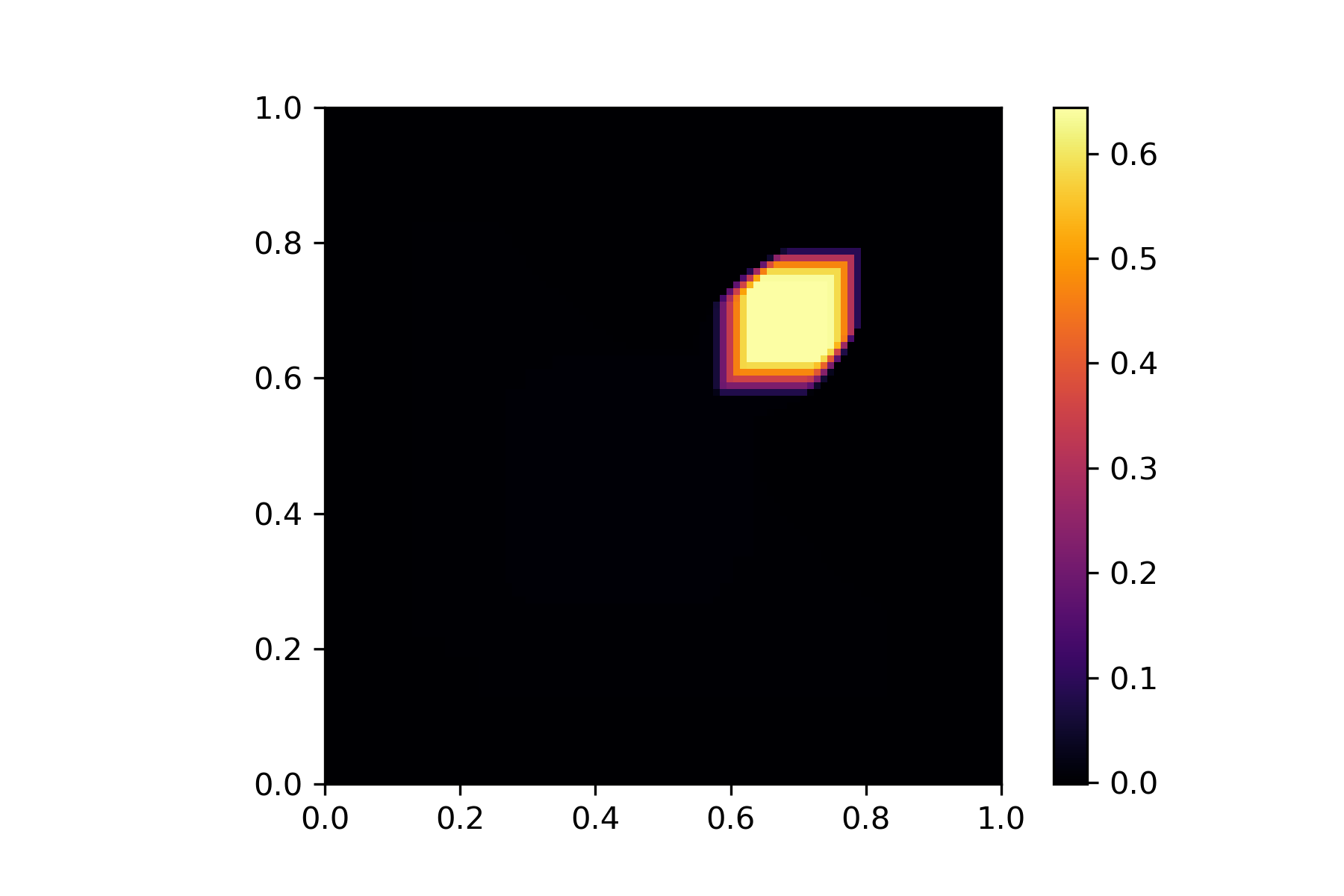}
        \caption{$\oTV_w+\ell^1_{\tilde{w}}$,\\
            $\alpha=10^{-8}, \beta=10^{-4}$}
    \end{subfigure}
    \begin{subfigure}[t]{0.35\linewidth}        
        \centering
        \includegraphics[trim=40 30 40 30, clip, width=\linewidth]{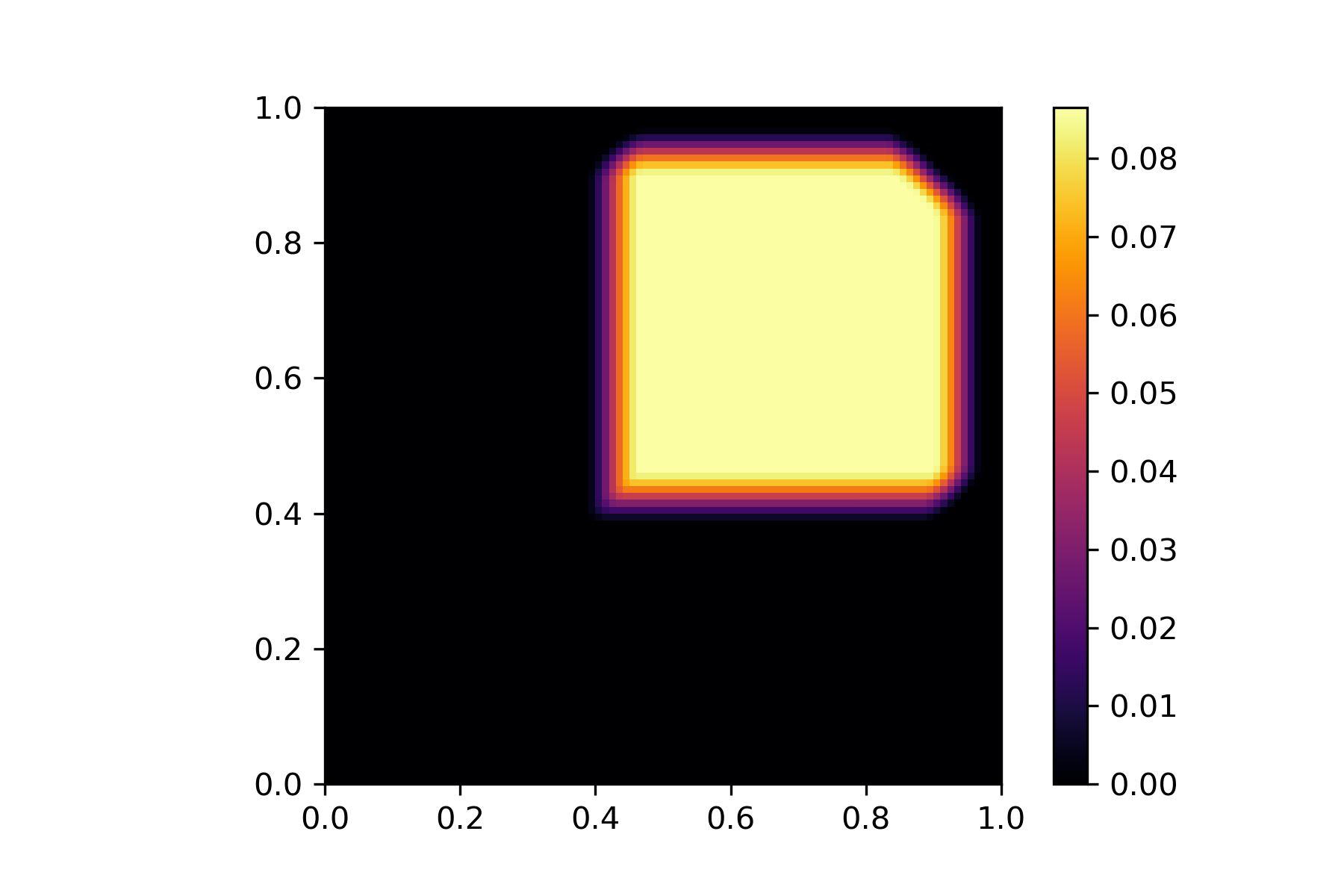}
        \caption{$\oTV_I+\ell^1_{\tilde{w}}$,\\ $\alpha = 10^{-7}, \beta=10^{-4}.$}
    \end{subfigure}
    \caption{Comparison of the true source and inverse recoveries using weighted TV ($TV_w$) or unweighted/standard TV ($TV_I$), as well as hybrid TV and weighted $\ell^1_w$-norm.}
    \label{fig:smallsource}
\end{figure}

\subsection*{Large rectangular source}
\captionsetup{justification=centering}
In contrast to what was observed for the previous setup, the recovery is almost perfect with weighted TV when the true source is a larger square, cf. panels a) and b) in Figure \ref{fig:largesource}. If we apply the hybrid method with too strong emphasis on the $\ell_w^1$-term, the size of the true source is somewhat underestimated, see panel c).

A rough discussion of why weighted TV handles larger rectangles well, is presented in Section \ref{sec:2D3D_analysis} after the proof of Theorem \ref{thm:disjointLines}.  
BIAS 45 degrees, stairs

\begin{figure}[H]
    \centering
    \begin{subfigure}[t]{0.35\linewidth}        
        \centering
        \includegraphics[trim=40 30 40 30, clip, width=\linewidth]{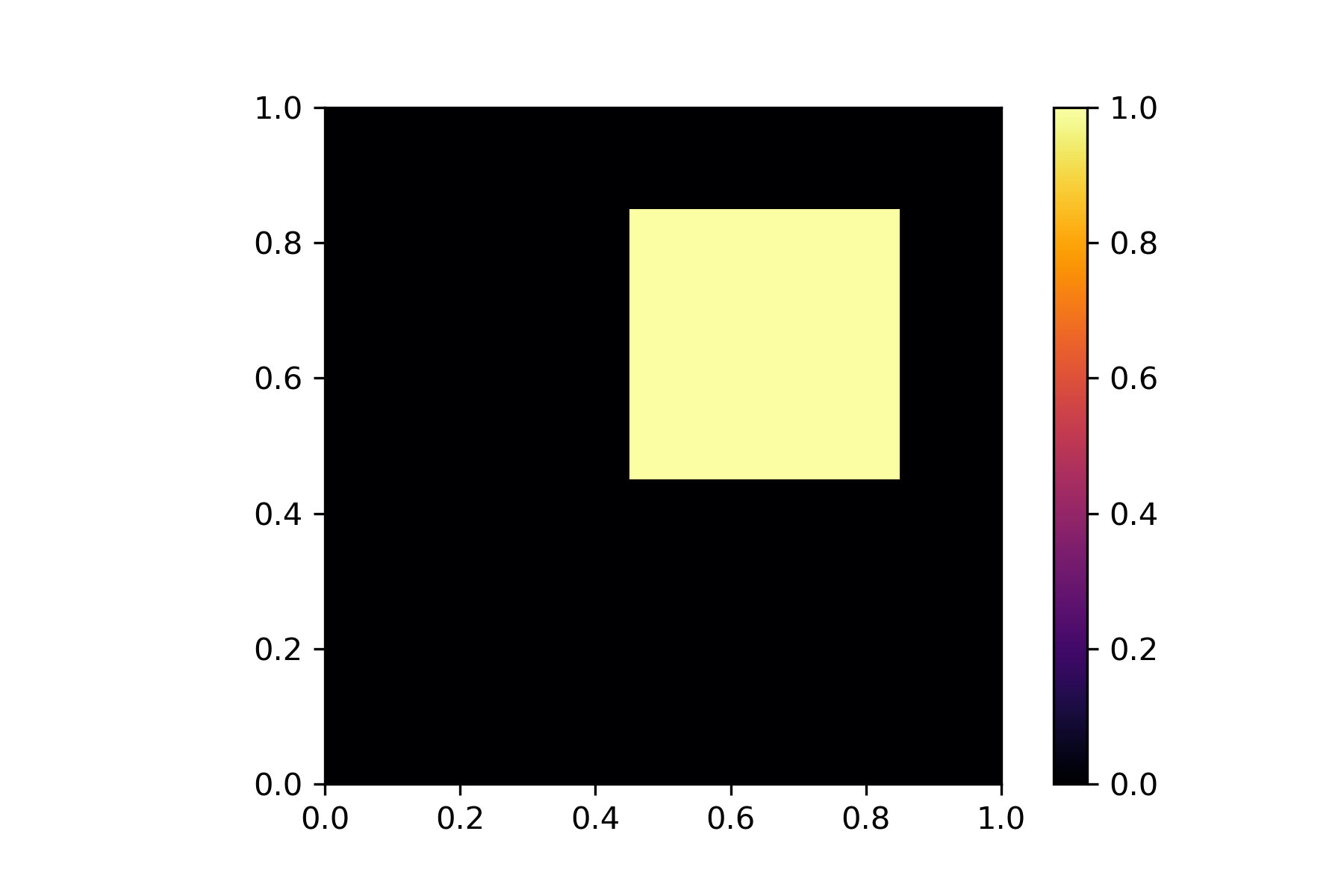}
        \caption{True large source}
    \end{subfigure}
    \begin{subfigure}[t]{0.35\linewidth}        
        \centering
        \includegraphics[trim=40 30 40 30, clip, width=\linewidth]{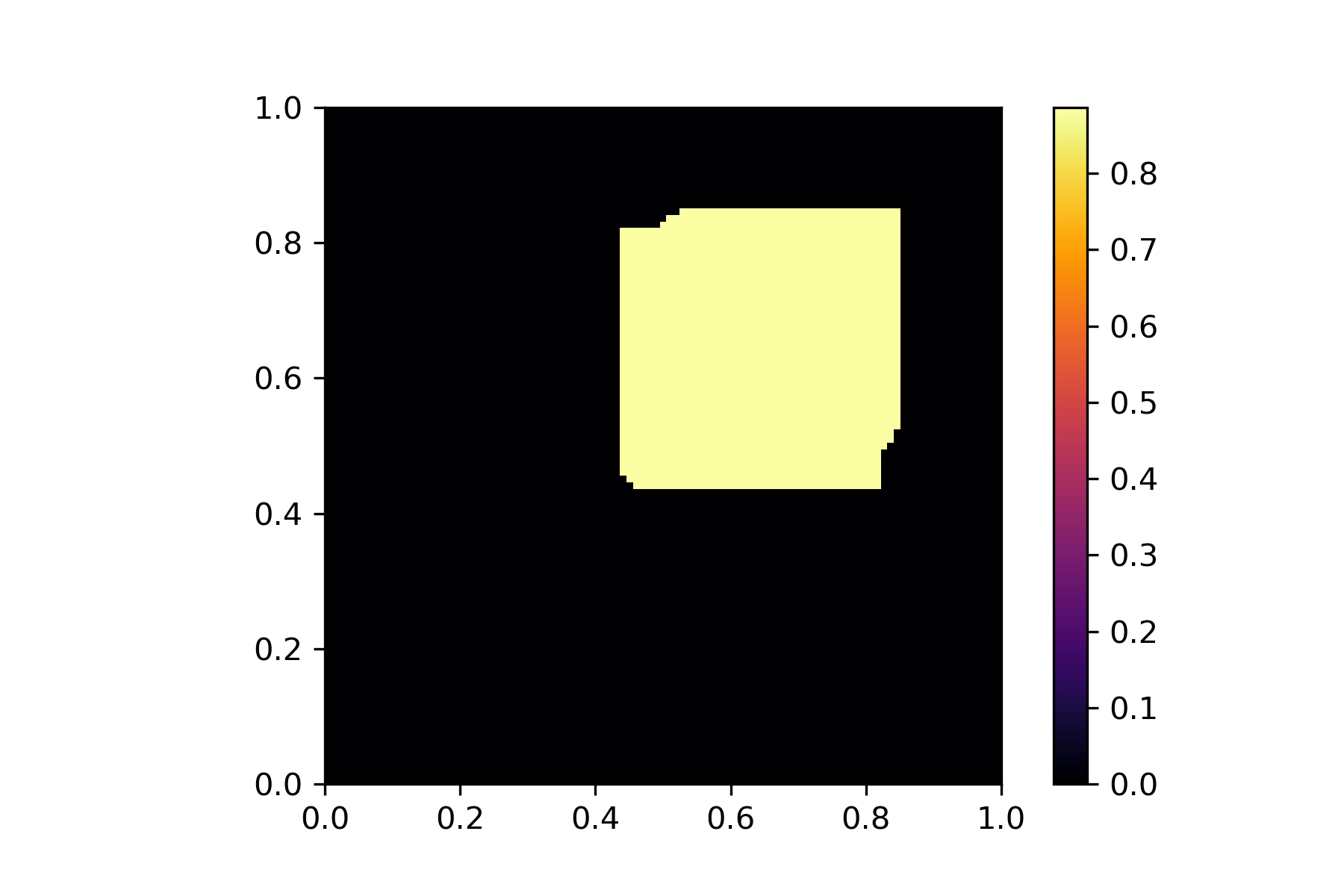}
        \caption{$\oTV_w$}
    \end{subfigure}\par
    \begin{subfigure}[t]{0.35\linewidth}        
        \centering
        \includegraphics[trim=40 30 40 30, clip, width=\linewidth]{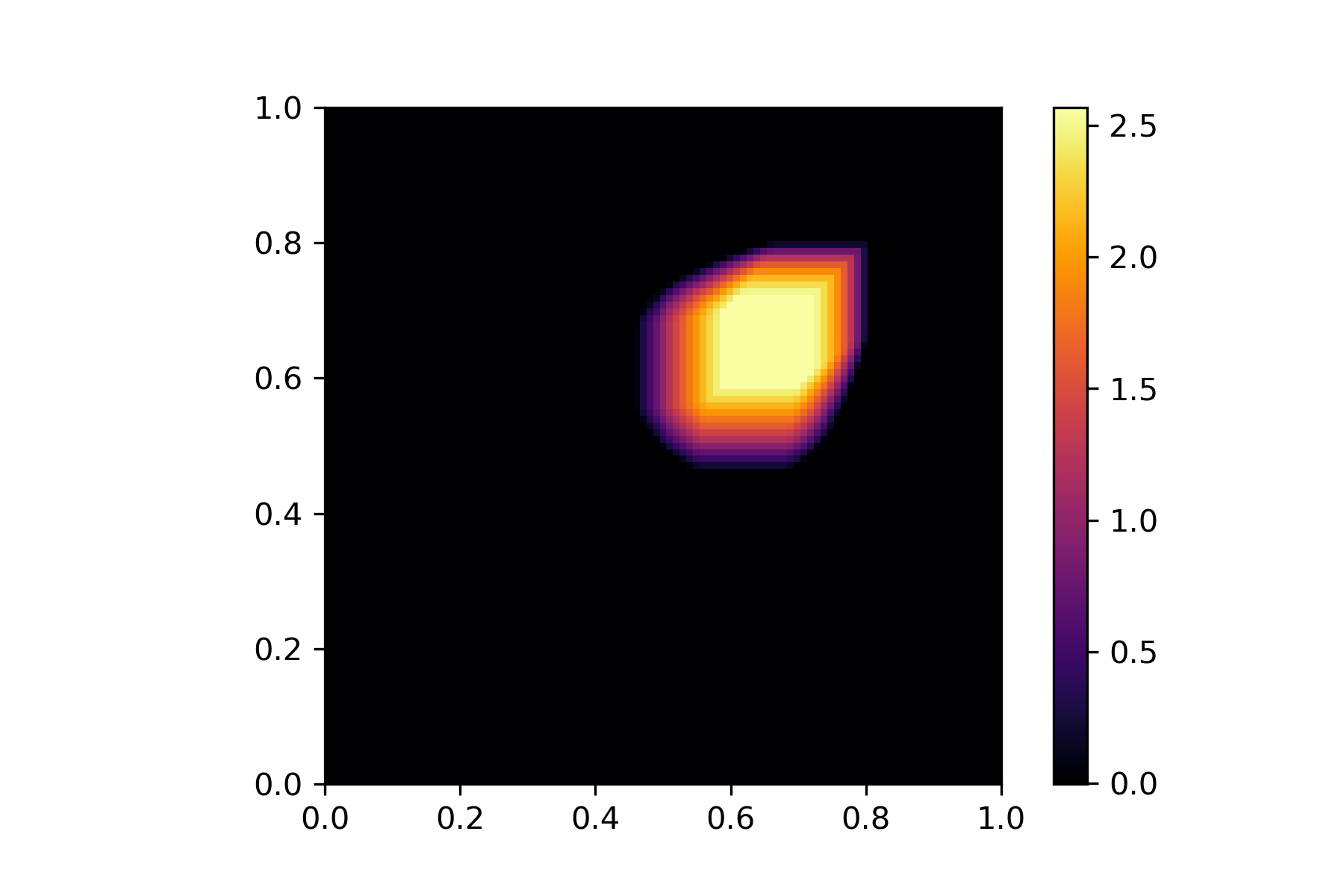}
        \caption{$\oTV_w+\ell^1_{\tilde{w}}$, \\ $\alpha=10^{-8}, \beta=10^{-4}$}
    \end{subfigure}
    \begin{subfigure}[t]{0.35\linewidth}        
        \centering
        \includegraphics[trim=40 30 40 30, clip, width=\linewidth]{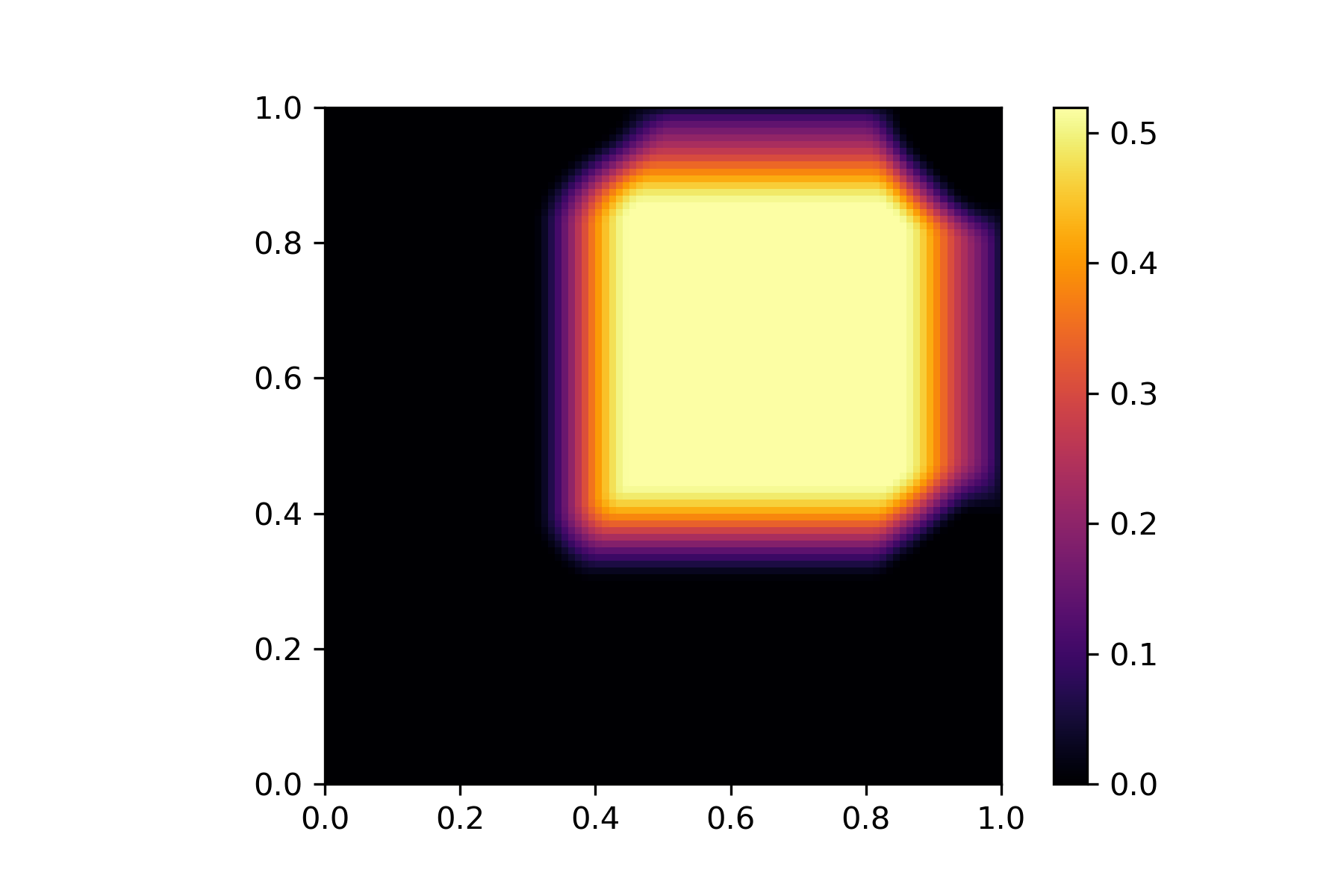}
        \caption{$\oTV_I+\ell^1_{\tilde{w}}$, $\alpha = 10^{-7}, \beta=10^{-4}.$}
    \end{subfigure}
    \caption{Comparison of the true source and inverse recoveries using weighted TV ($TV_w$) or unweighted/standard TV ($TV_I$), as well as hybrid TV and weighted $\ell^1_w$-norm.}
    \label{fig:largesource}
\end{figure}

\subsection*{Positional influence}
The next example illuminates the ability of the methods to locate small true sources in different parts of the domain. When applying weighted TV, we observe that the further into the domain the true source is located, the more its size is overestimated by the inversion procedure, cf. panels (b), (e) and (h) of Figure \ref{fig:position}. The use of a hybrid approach can (somewhat) mitigate this problem. 
\begin{figure}[H]
    \centering
    \begin{subfigure}[t]{0.32\linewidth}        
        \centering
        \includegraphics[trim=40 30 40 30, clip, width=\linewidth]{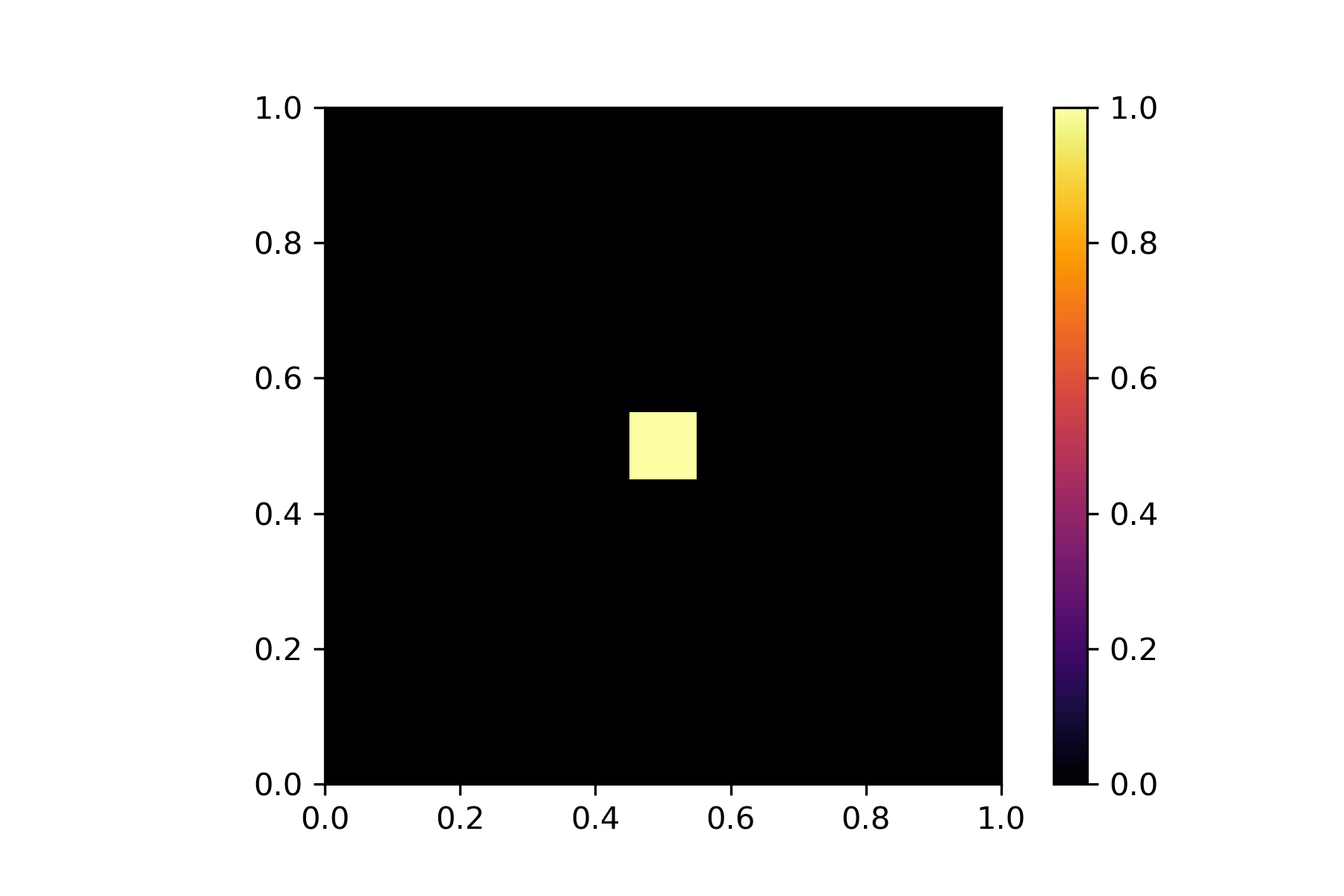}
        \caption{True source}
    \end{subfigure}
    \begin{subfigure}[t]{0.32\linewidth}        
        \centering
        \includegraphics[trim=40 30 40 30, clip, width=\linewidth]{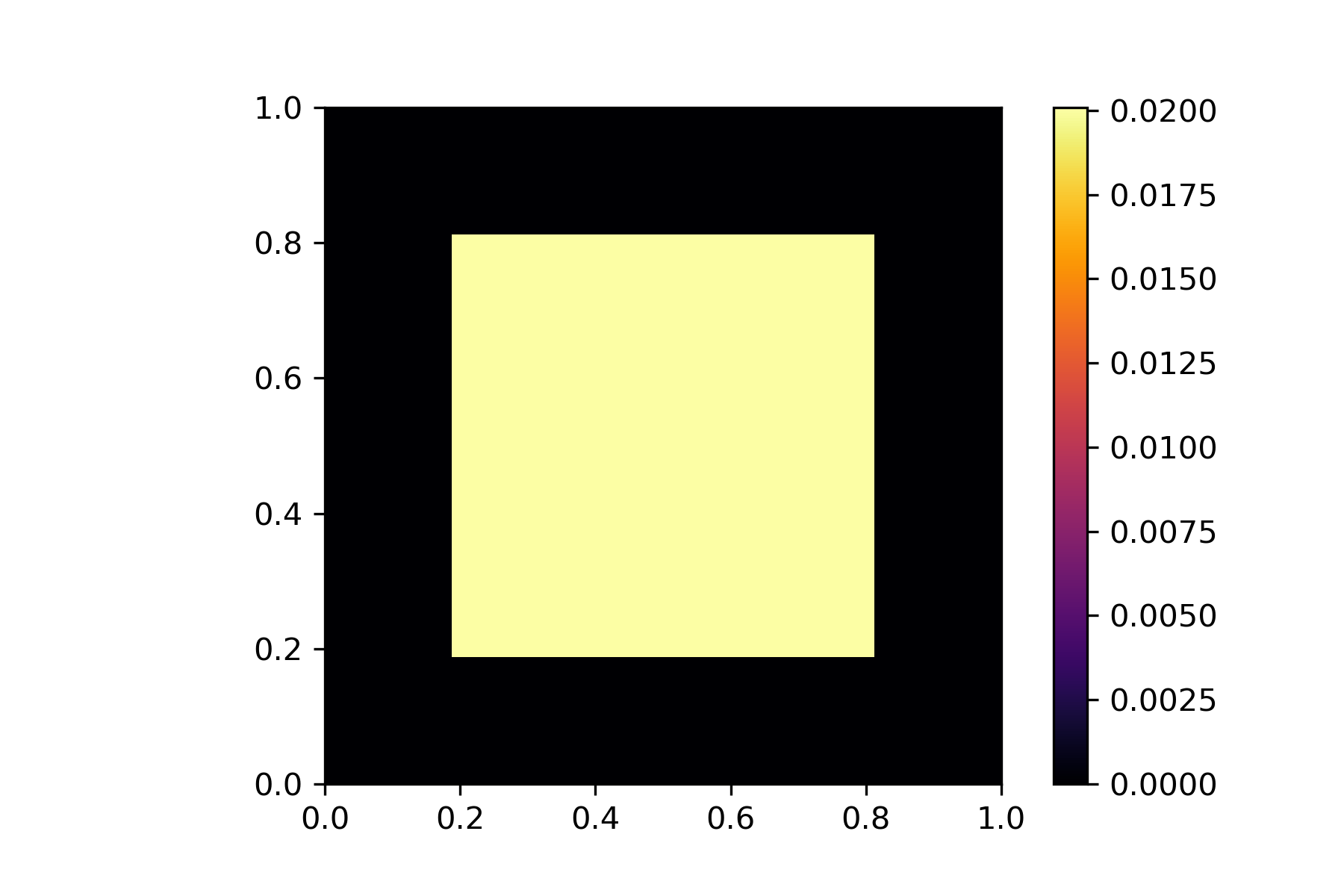}
        \caption{$\oTV_w$}
    \end{subfigure}
    \begin{subfigure}[t]{0.32\linewidth}        
        \centering
        \includegraphics[trim=40 30 40 30, clip, width=\linewidth]{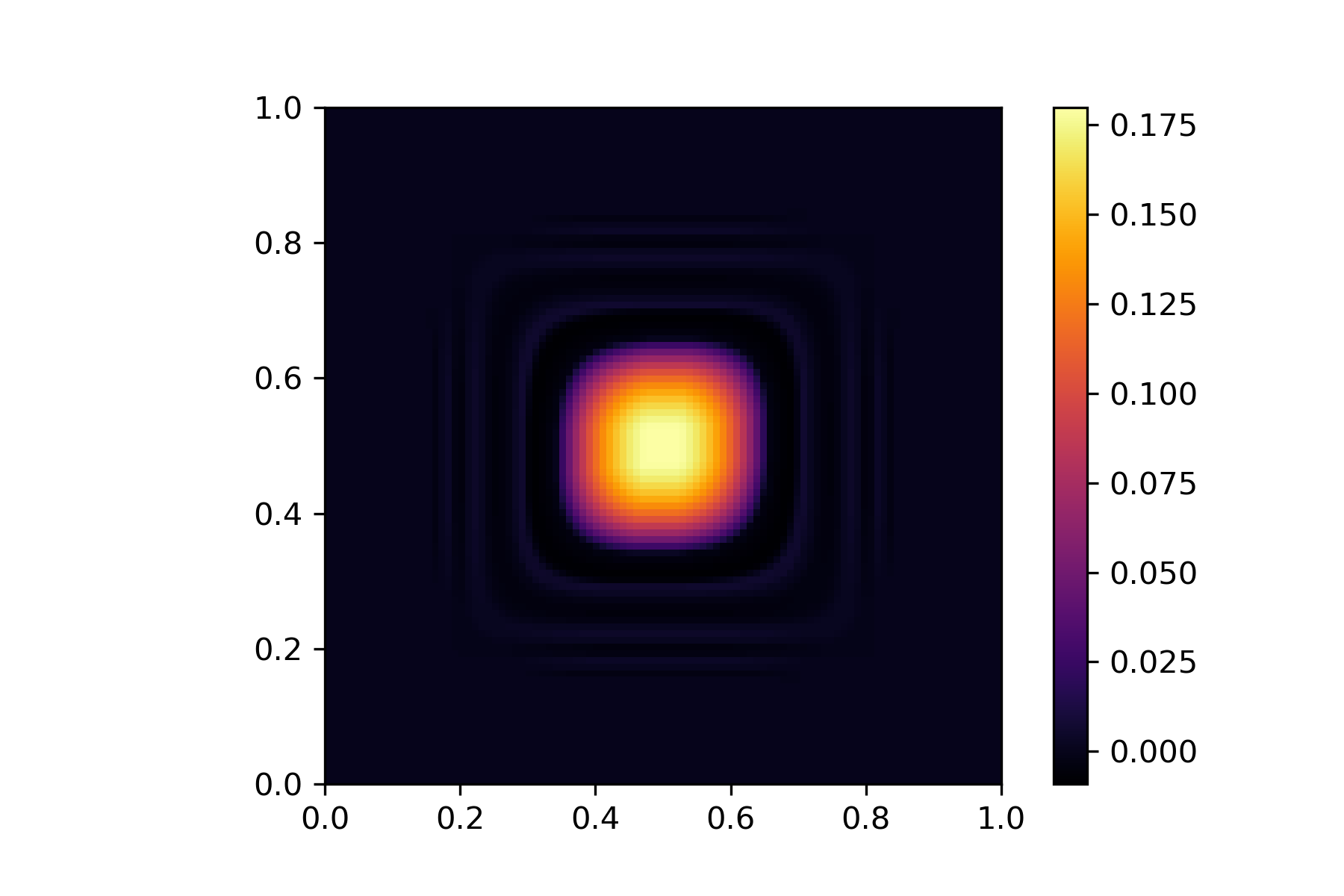}
        \caption{$\oTV_w+\ell^1_{\tilde{w}}$, \\ $\alpha=10^{-9}, \beta=10^{-5}$}
    \end{subfigure}\par
    \begin{subfigure}[t]{0.32\linewidth}        
        \centering
        \includegraphics[trim=40 30 40 30, clip, width=\linewidth]{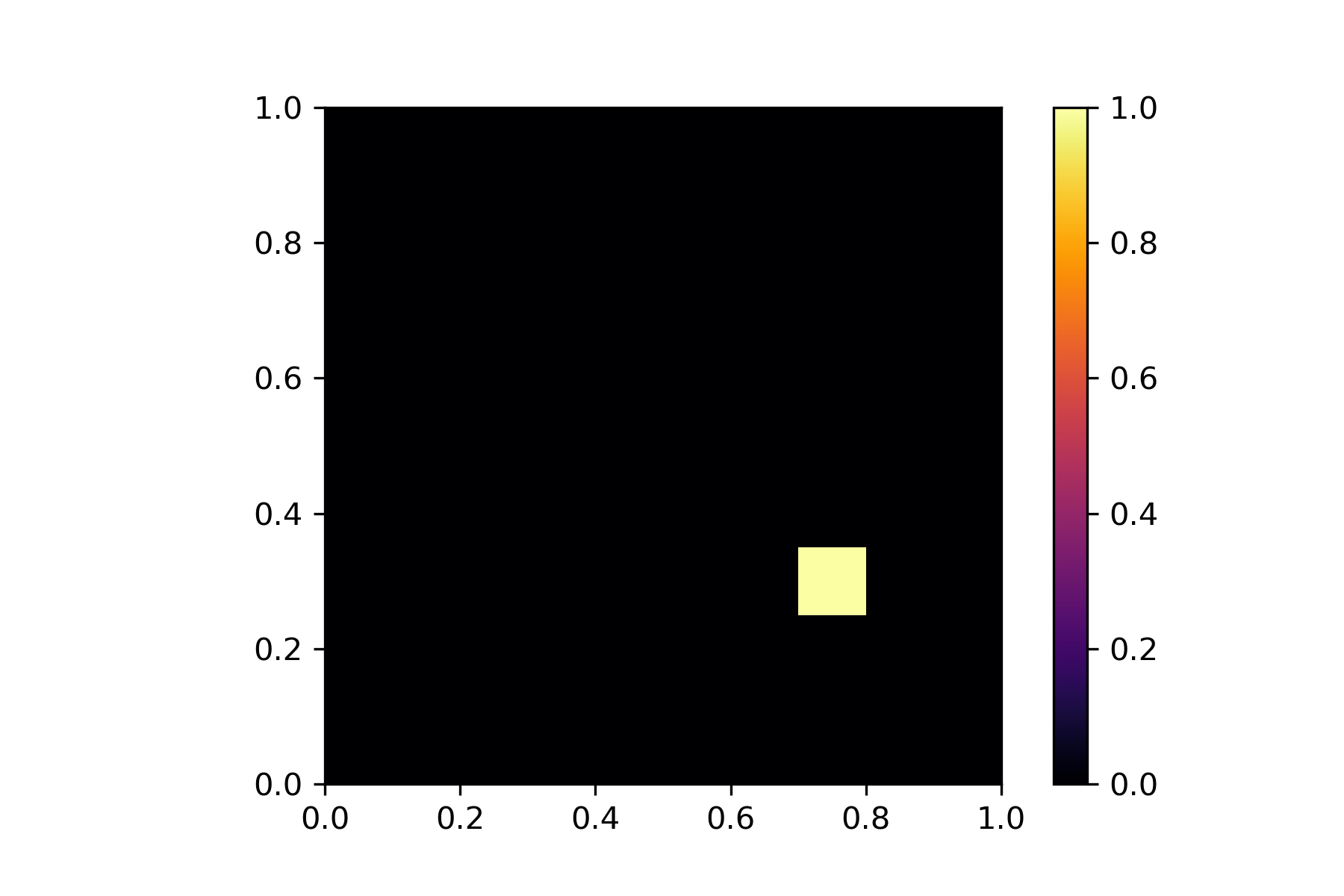}
        \caption{True source}
    \end{subfigure}
    \begin{subfigure}[t]{0.32\linewidth}        
        \centering
        \includegraphics[trim=40 30 40 30, clip, width=\linewidth]{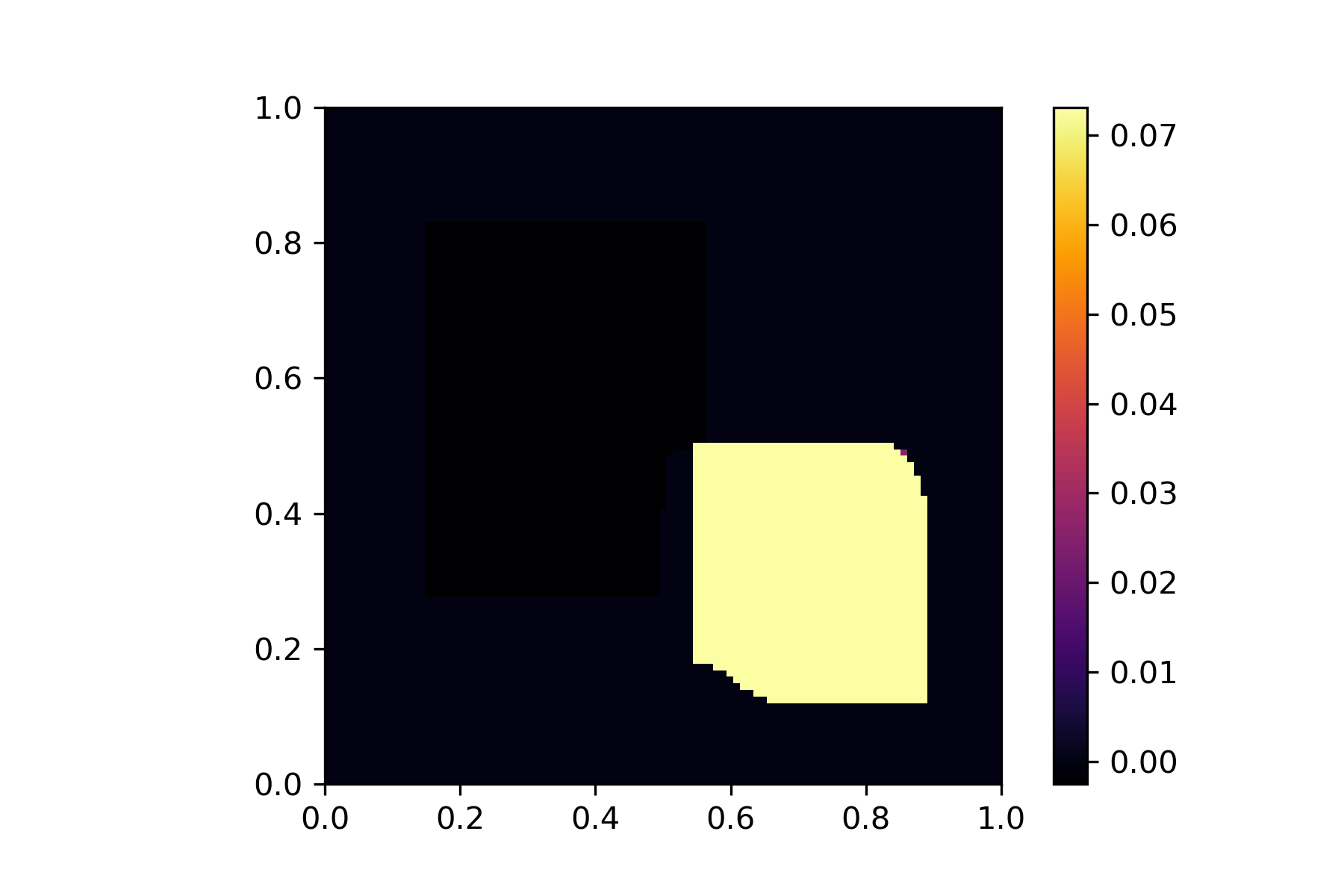}
        \caption{$\oTV_w$}
    \end{subfigure}
    \begin{subfigure}[t]{0.32\linewidth}        
        \centering
        \includegraphics[trim=40 30 40 30, clip, width=\linewidth]{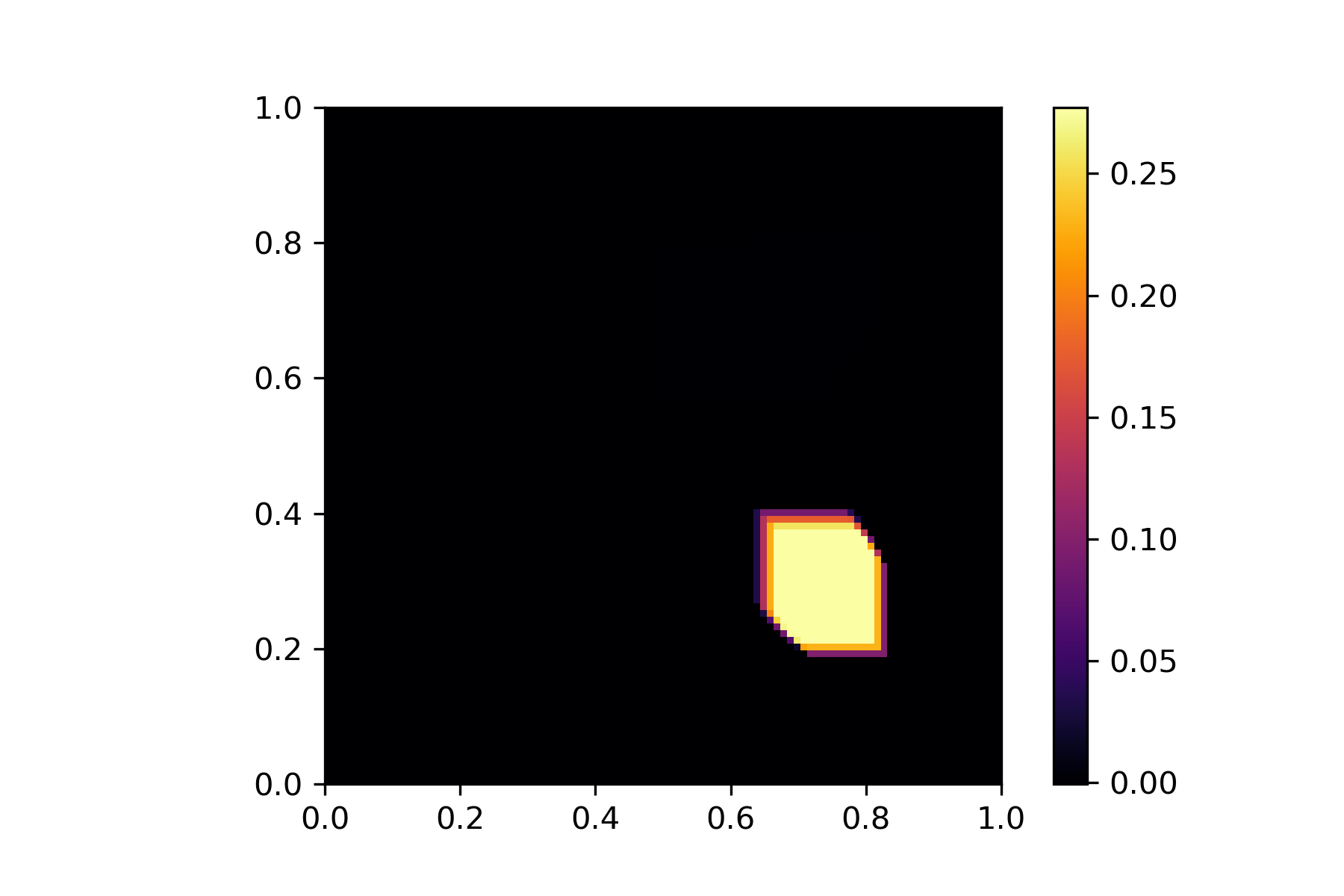}
        \caption{$\oTV_w+\ell^1_{\tilde{w}}$, \\ $\alpha=10^{-8}, \beta=5*10^{-5}$}
    \end{subfigure}\par
    \begin{subfigure}[t]{0.32\linewidth}        
        \centering
        \includegraphics[trim=40 30 40 30, clip, width=\linewidth]{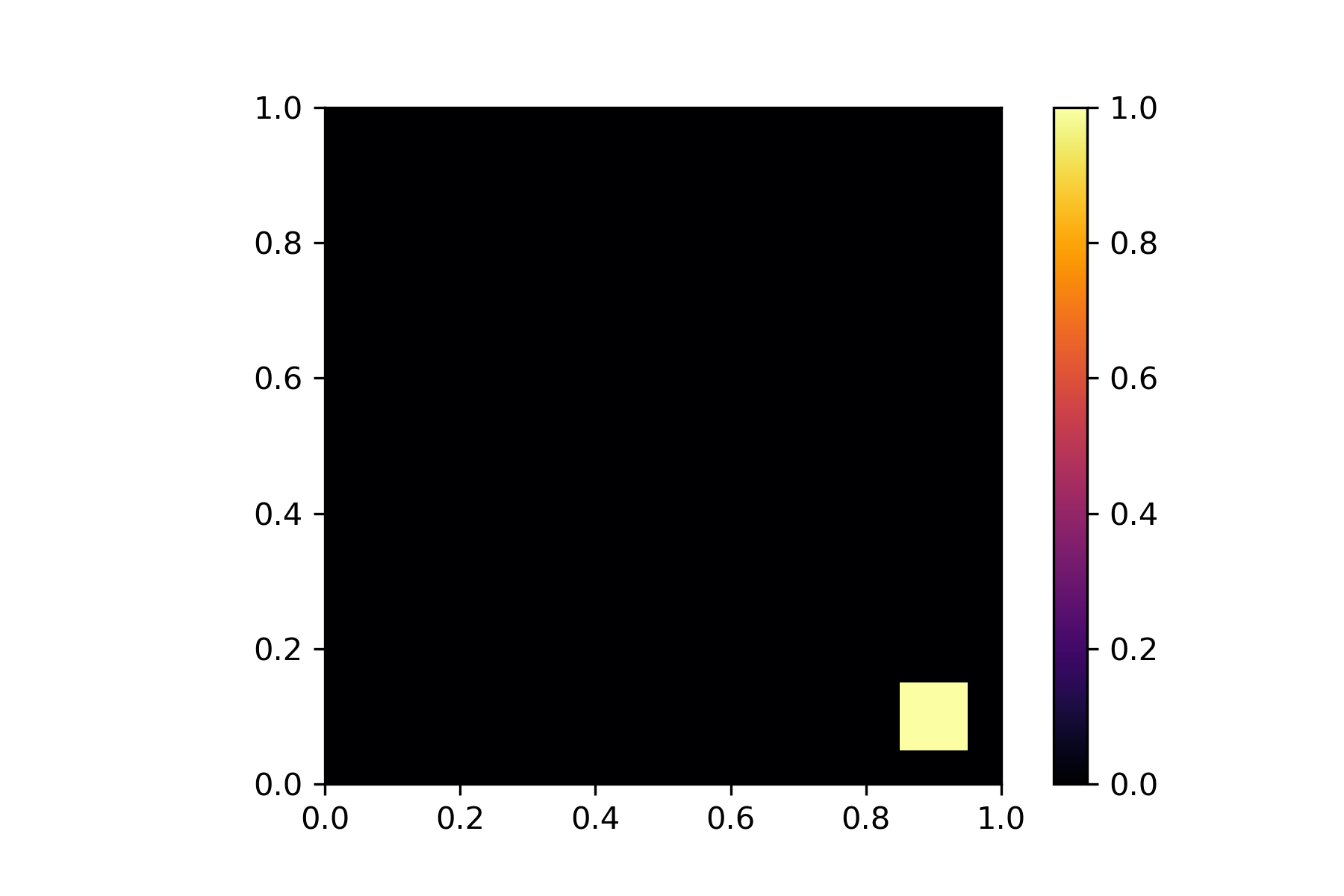}
        \caption{True source}
    \end{subfigure}
    \begin{subfigure}[t]{0.32\linewidth}        
        \centering
        \includegraphics[trim=40 30 40 30, clip, width=\linewidth]{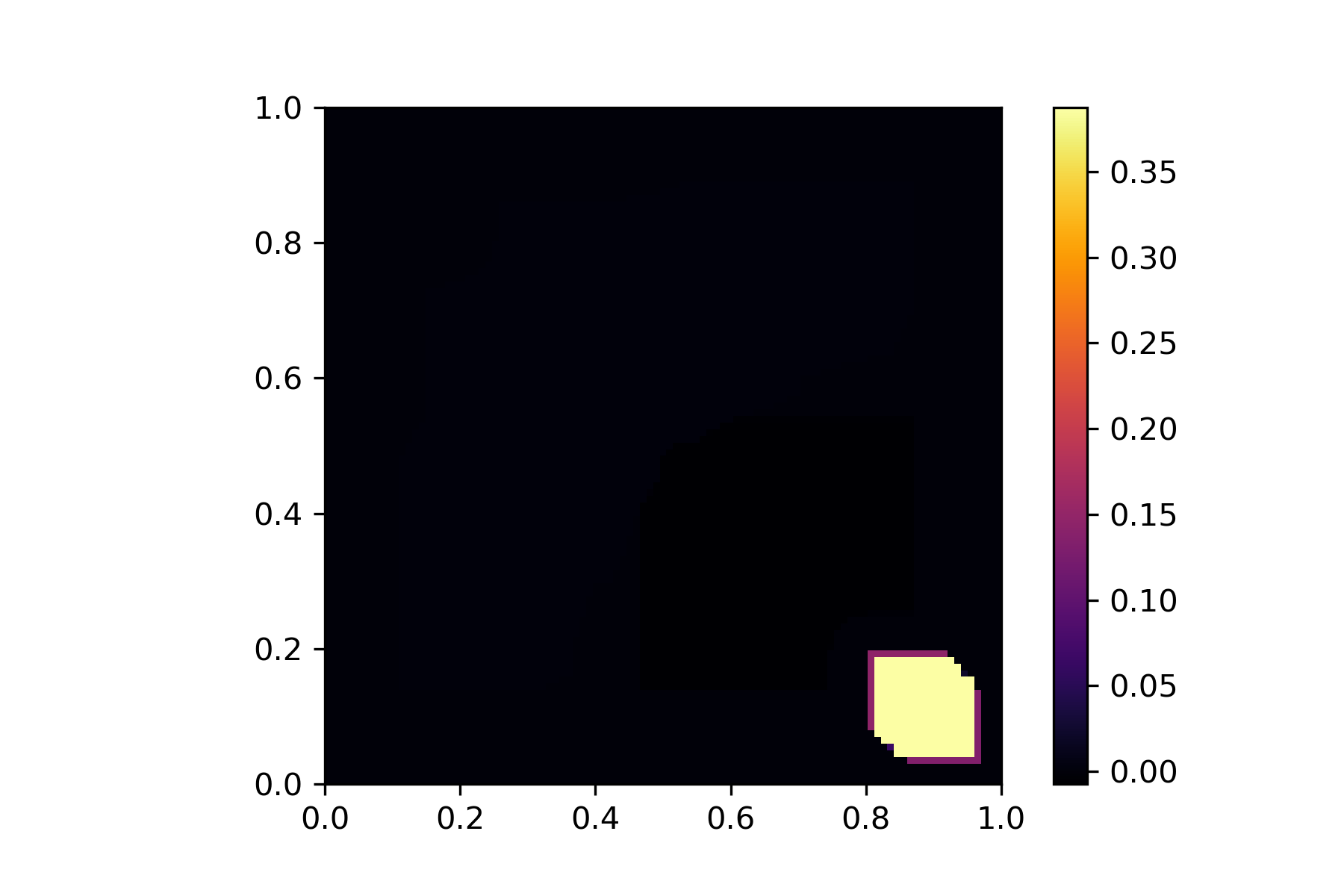}
        \caption{$\oTV_w$}
    \end{subfigure}
    \begin{subfigure}[t]{0.32\linewidth}        
        \centering
        \includegraphics[trim=40 30 40 30, clip, width=\linewidth]{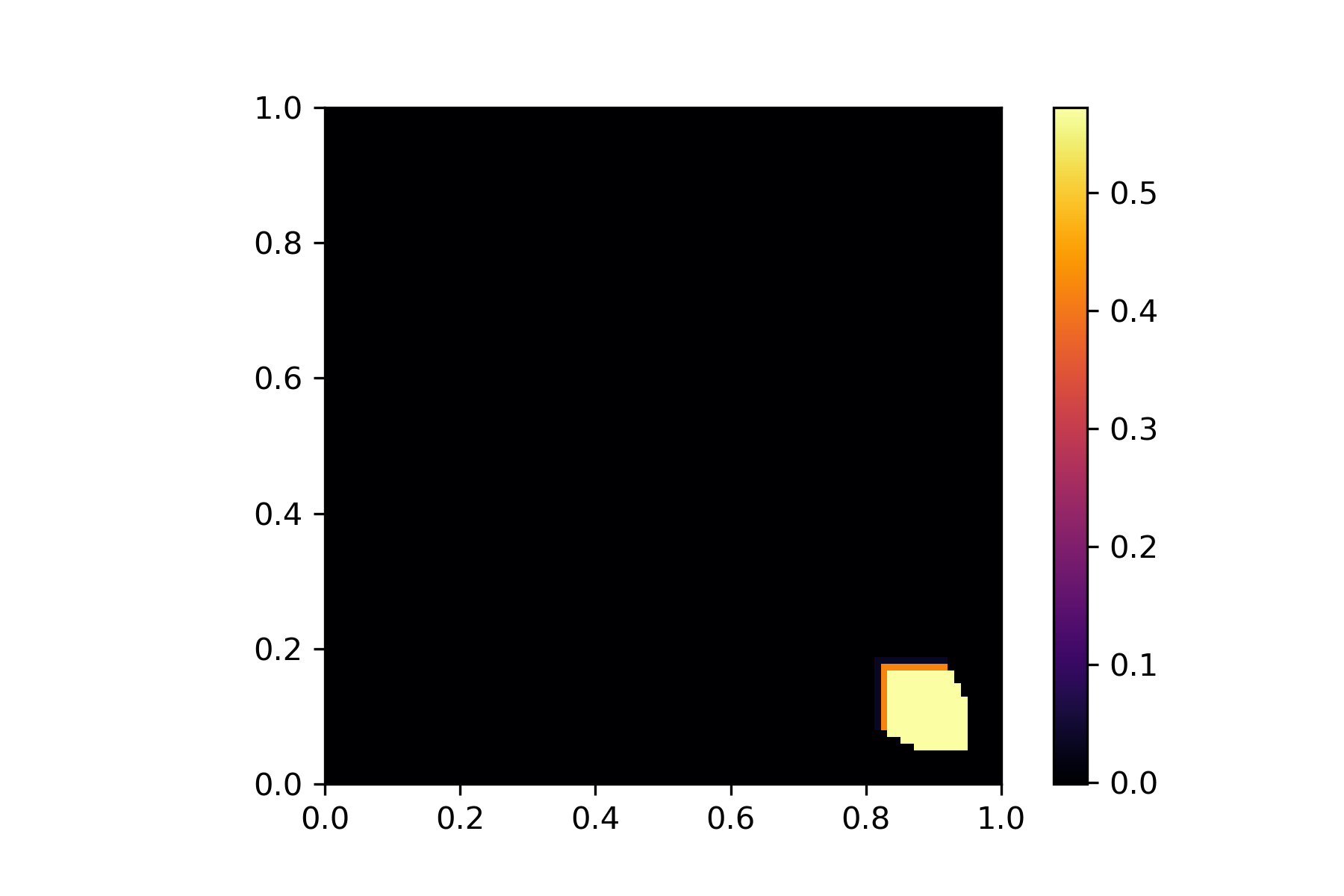}
        \caption{$\oTV_w+\ell^1_{\tilde{w}}$, \\ $\alpha=10^{-8}, \beta=10^{-5}$}
    \end{subfigure}\par
    \caption{Illustration of how the position of the true source influences the size of the inversely recovered regions.}
    \label{fig:position}
\end{figure}

\subsection*{More advanced shapes}
For sources with more complex geometries, as shown in Figure \ref{fig:advancedsources}, recovering the true source shape becomes increasingly challenging. The weighted TV method tends to produce more block-like reconstructions (middle panels). When the hybrid method is applied (right panels), the reconstruction quality shows a modest but noticeable improvement.

\begin{figure}[H]
    \centering
    \begin{subfigure}[b]{0.32\linewidth}        
        \centering
        \includegraphics[trim=40 30 40 30, clip, width=\linewidth]{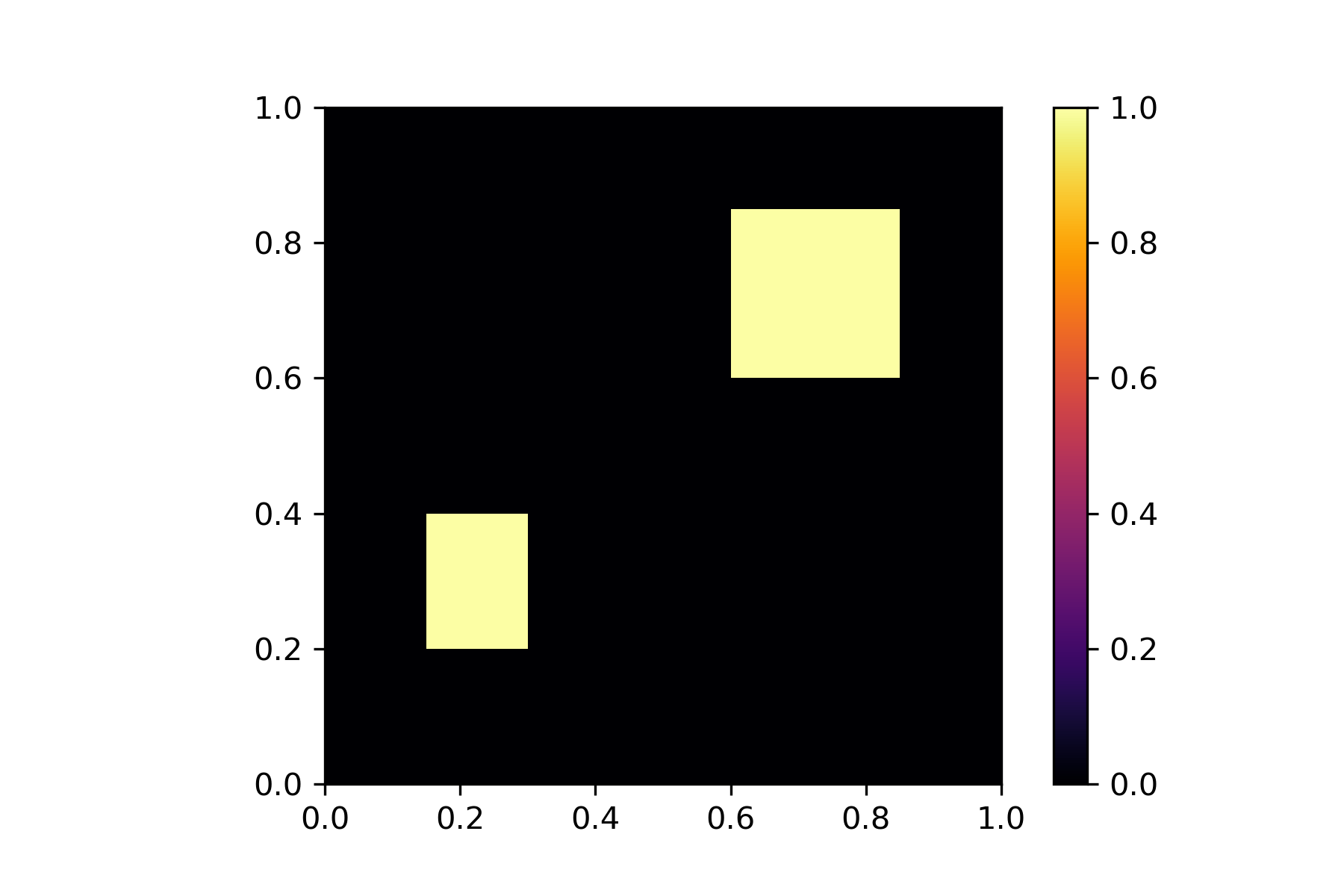}
        \caption{True source}
    \end{subfigure}
    \begin{subfigure}[b]{0.32\linewidth}        
        \centering
        \includegraphics[trim=40 30 40 30, clip, width=\linewidth]{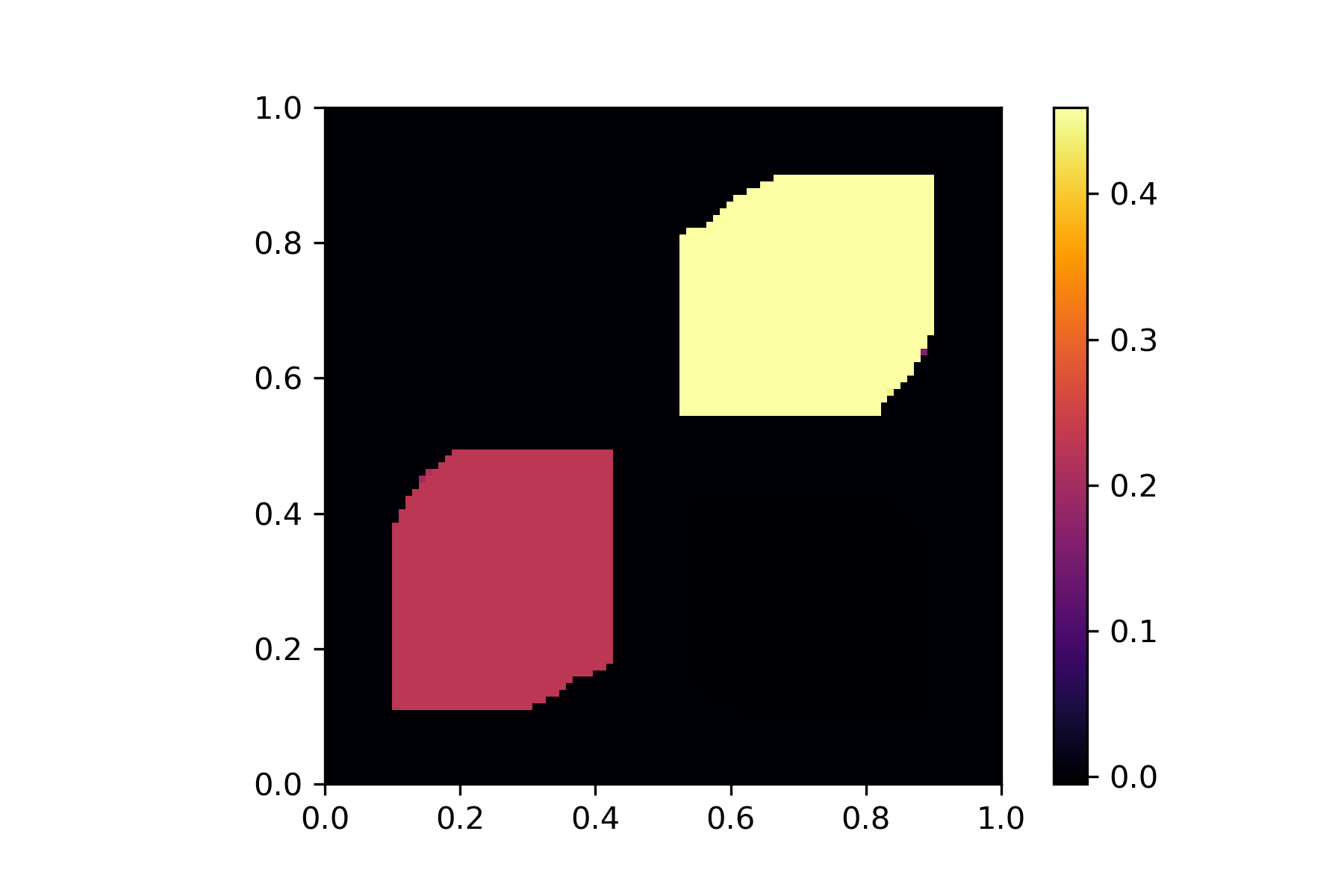}
        \caption{$\oTV_w$}
    \end{subfigure}
    \begin{subfigure}[b]{0.32\linewidth}        
        \centering
        \includegraphics[trim=40 30 40 30, clip, width=\linewidth]{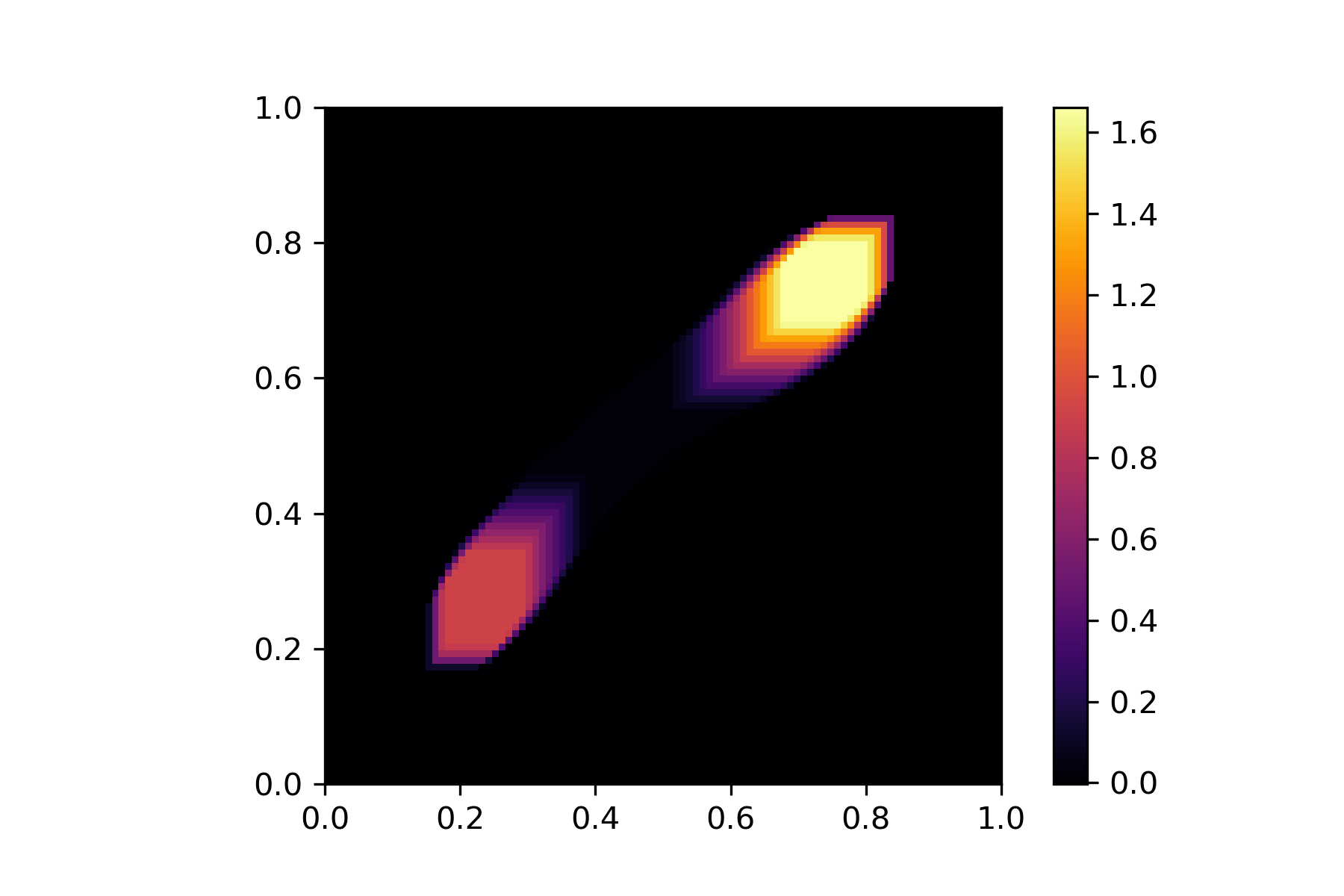}
        \caption{$\oTV_w + \ell^1_{\tilde{w}}$}
    \end{subfigure}\par
    \begin{subfigure}[b]{0.32\linewidth}        
        \centering
        \includegraphics[trim=40 30 40 30, clip, width=\linewidth]{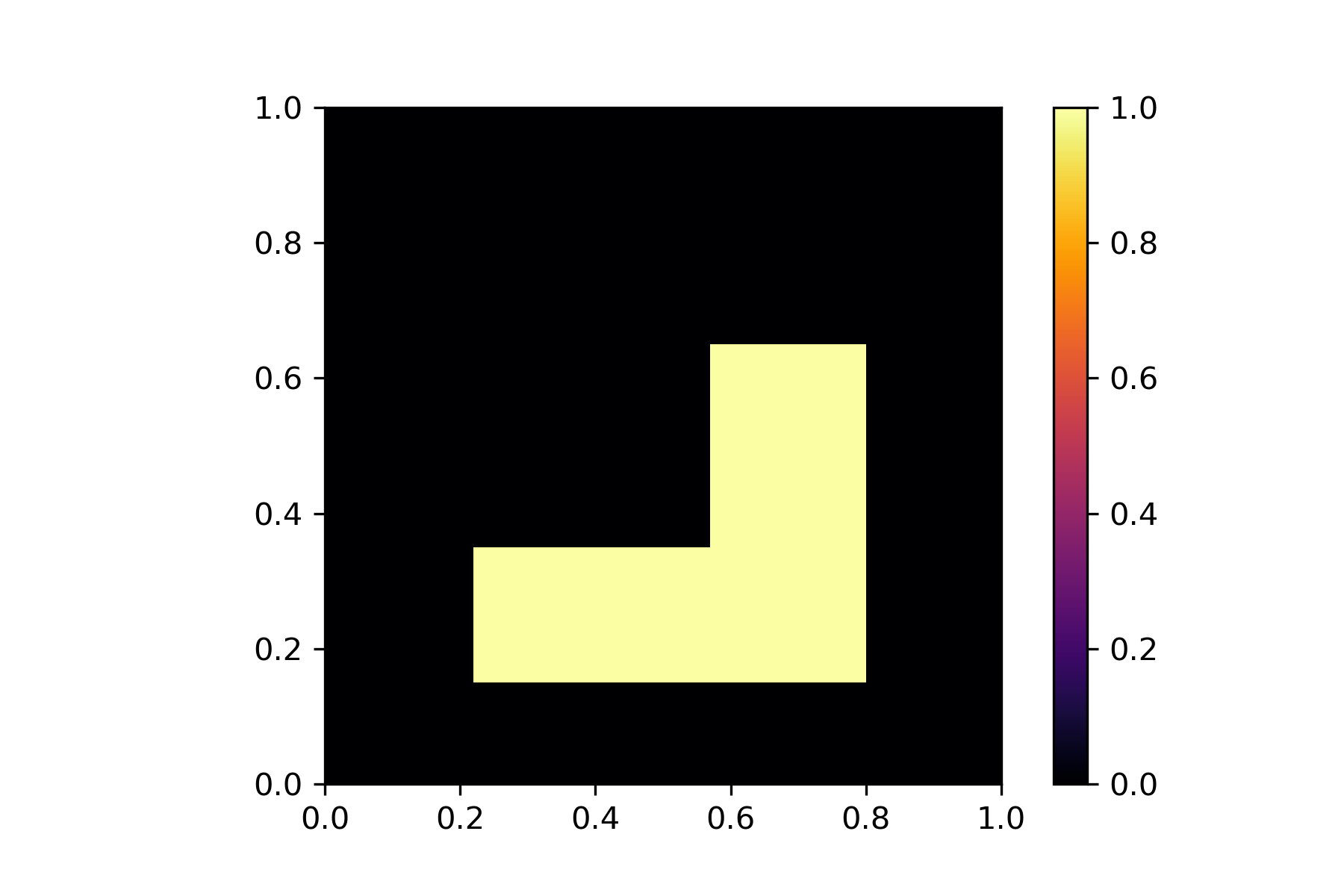}
        \caption{True source}
    \end{subfigure}
    \begin{subfigure}[b]{0.32\linewidth}        
        \centering
        \includegraphics[trim=40 30 40 30, clip, width=\linewidth]{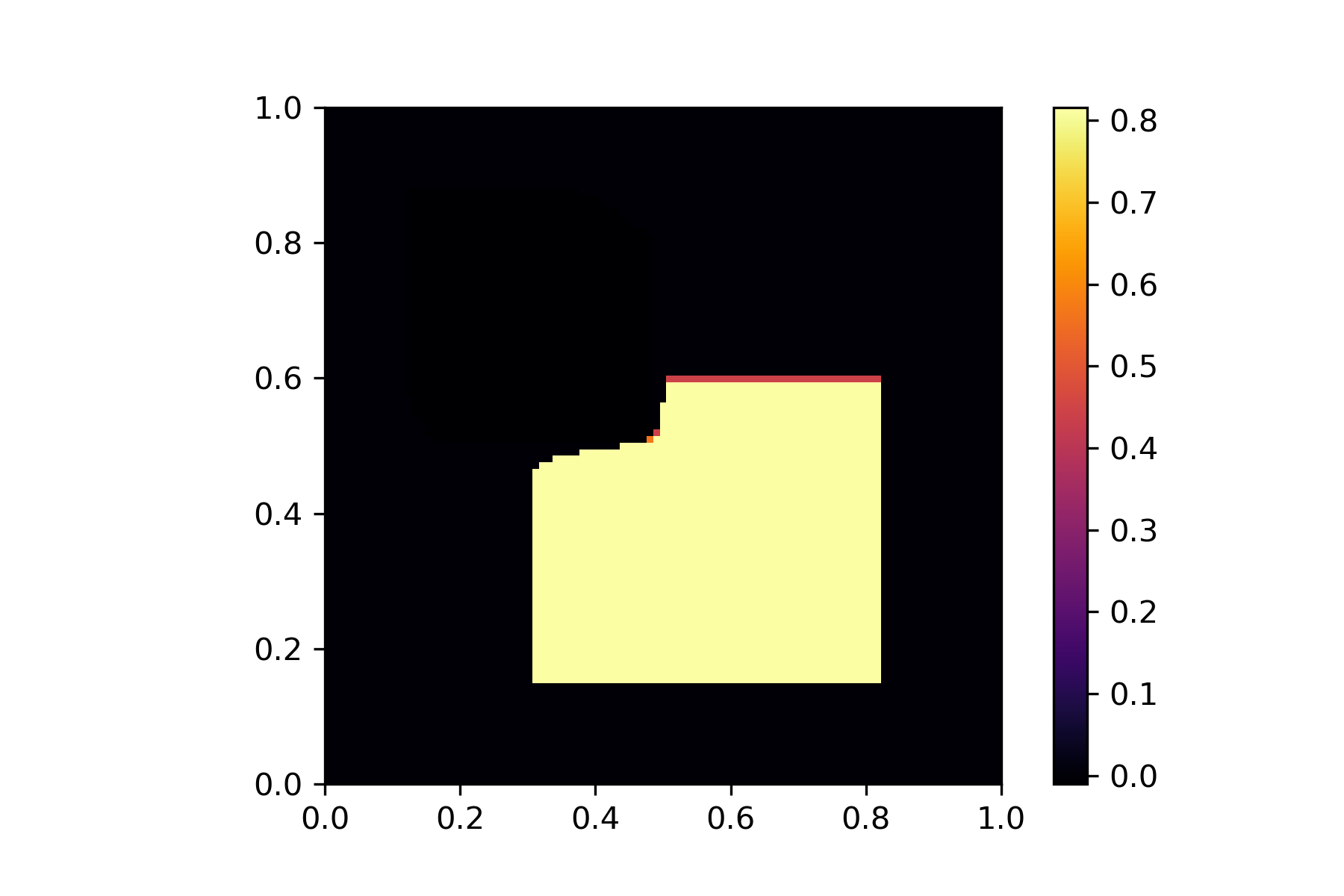}
        \caption{$\oTV_w$}
    \end{subfigure}
    \begin{subfigure}[b]{0.32\linewidth}        
        \centering
        \includegraphics[trim=40 30 40 30, clip, width=\linewidth]{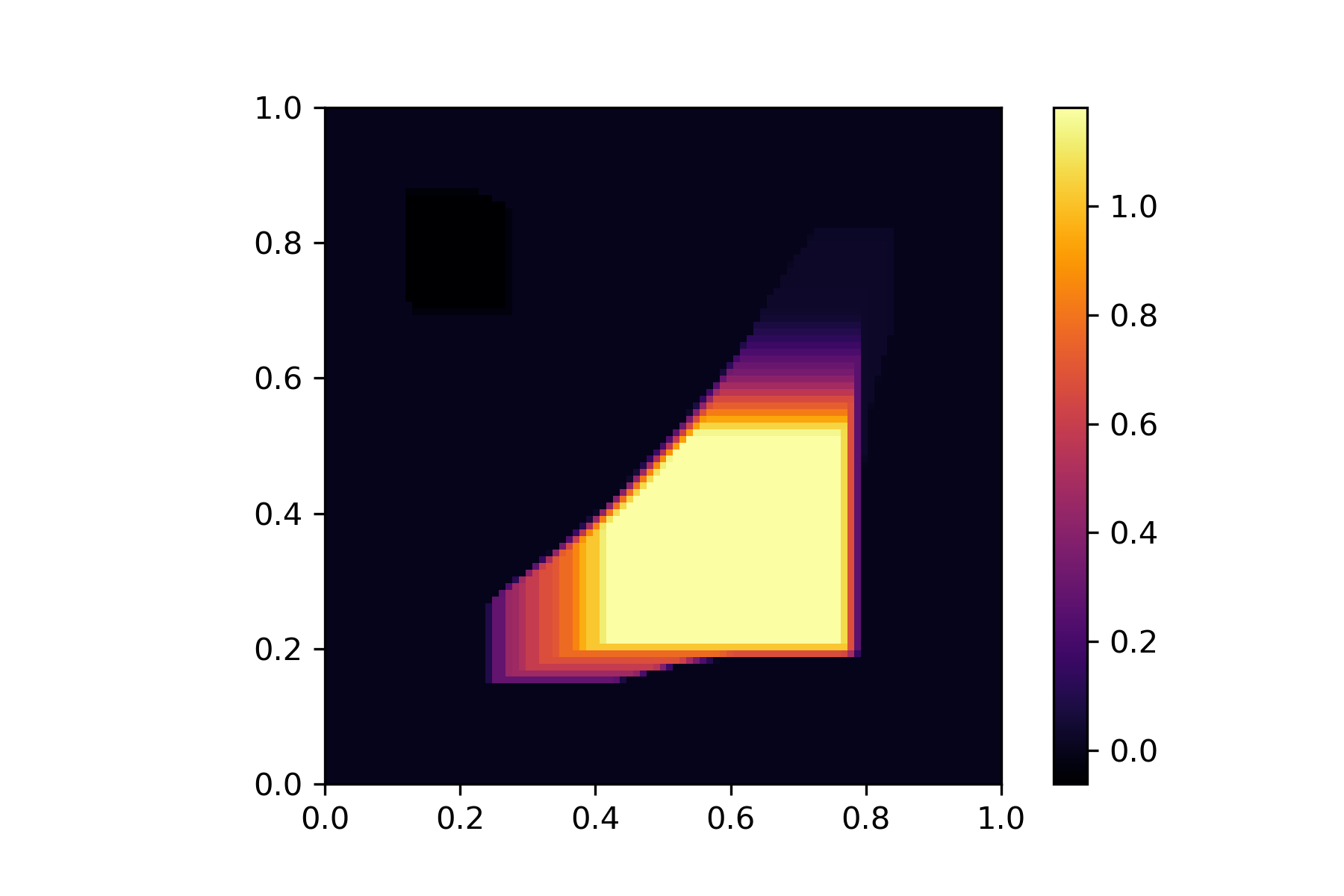}
        \caption{$\oTV_w + \ell^1_{\tilde{w}}$}
    \end{subfigure}\par
    \begin{subfigure}[b]{0.32\linewidth}        
        \centering
        \includegraphics[trim=40 30 40 30, clip, width=\linewidth]{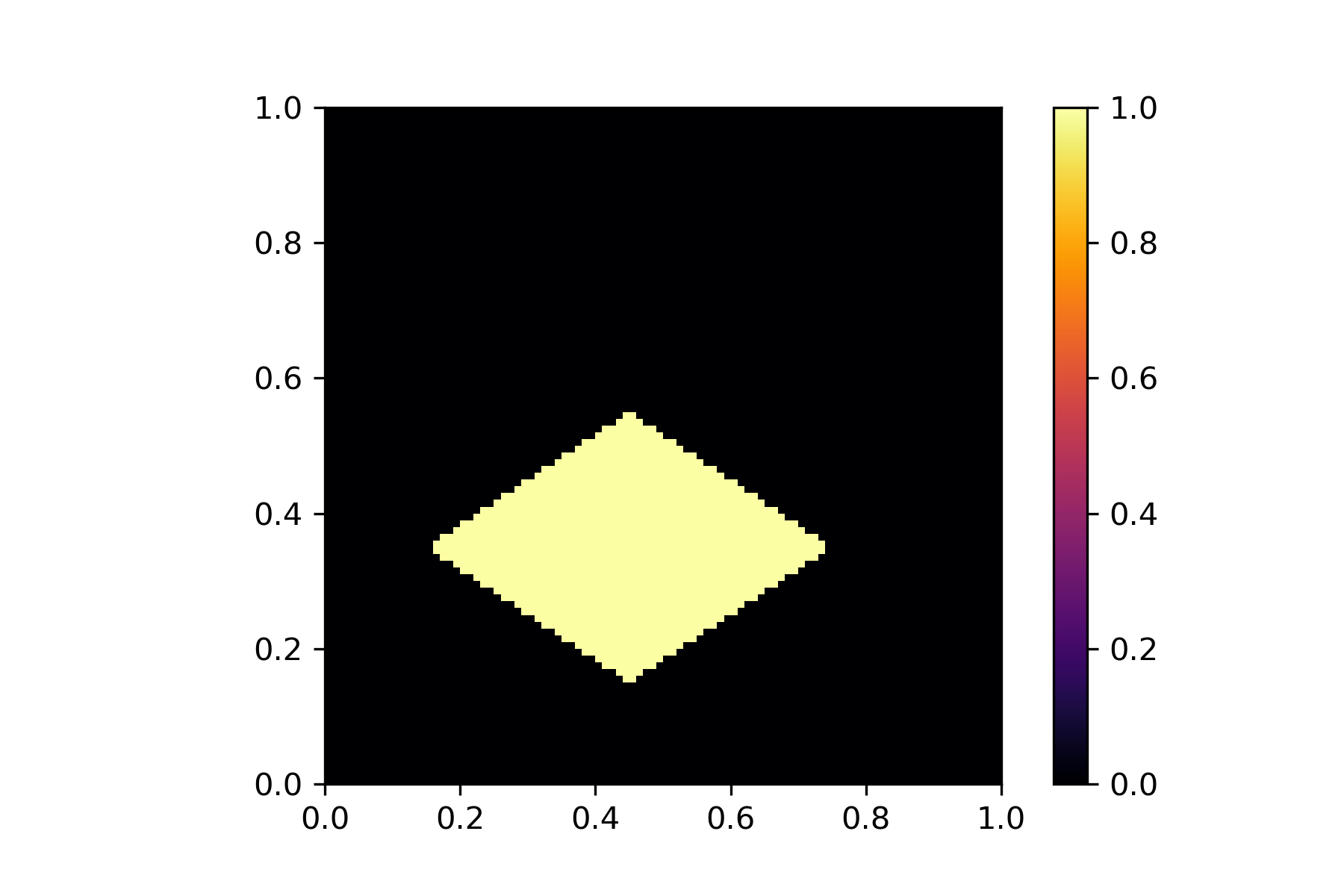}
        \caption{True source}
    \end{subfigure}
    \begin{subfigure}[b]{0.32\linewidth}        
        \centering
        \includegraphics[trim=40 30 40 30, clip, width=\linewidth]{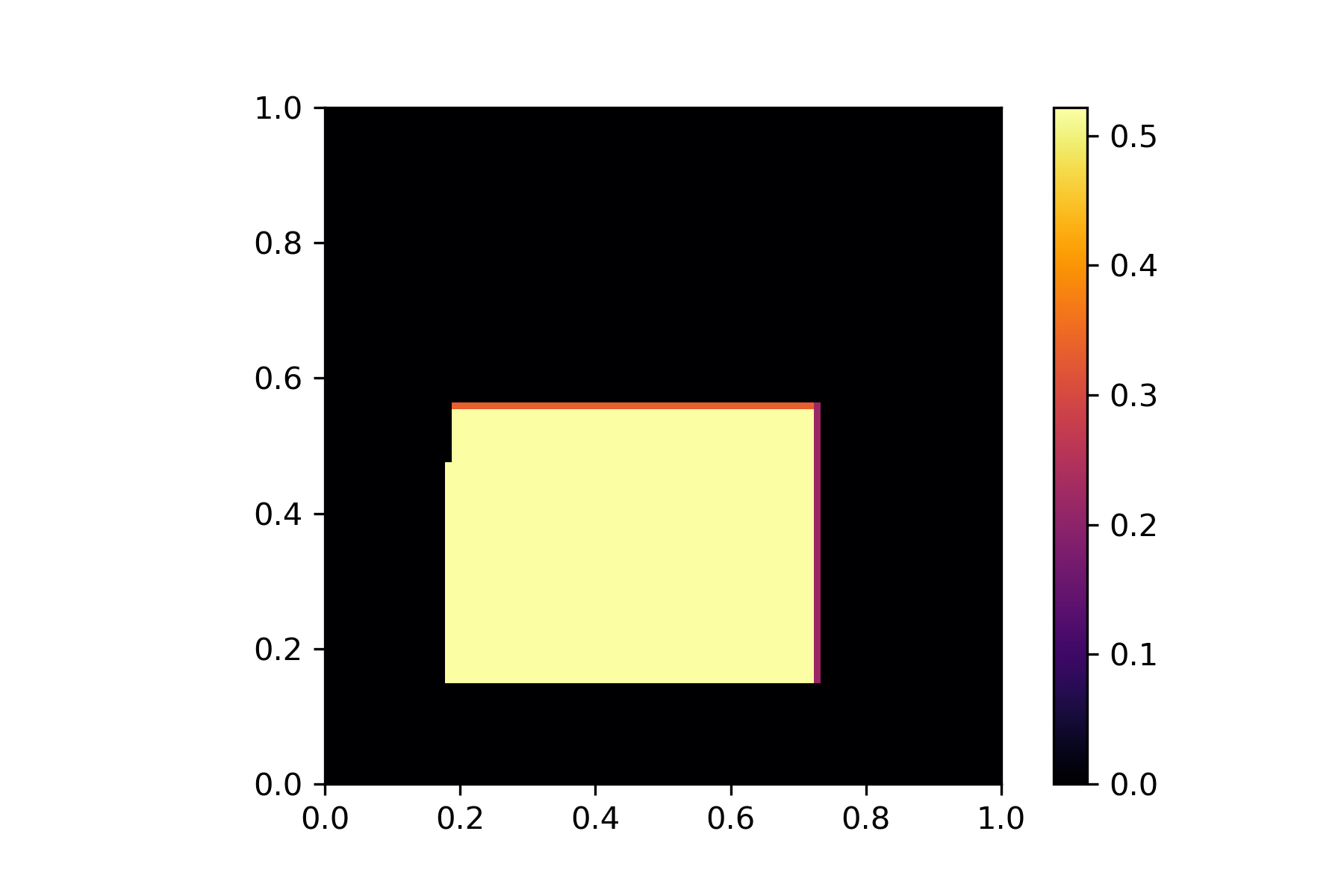}
        \caption{$\oTV_w$}
    \end{subfigure}
    \begin{subfigure}[b]{0.32\linewidth}        
        \centering
        \includegraphics[trim=40 30 40 30, clip, width=\linewidth]{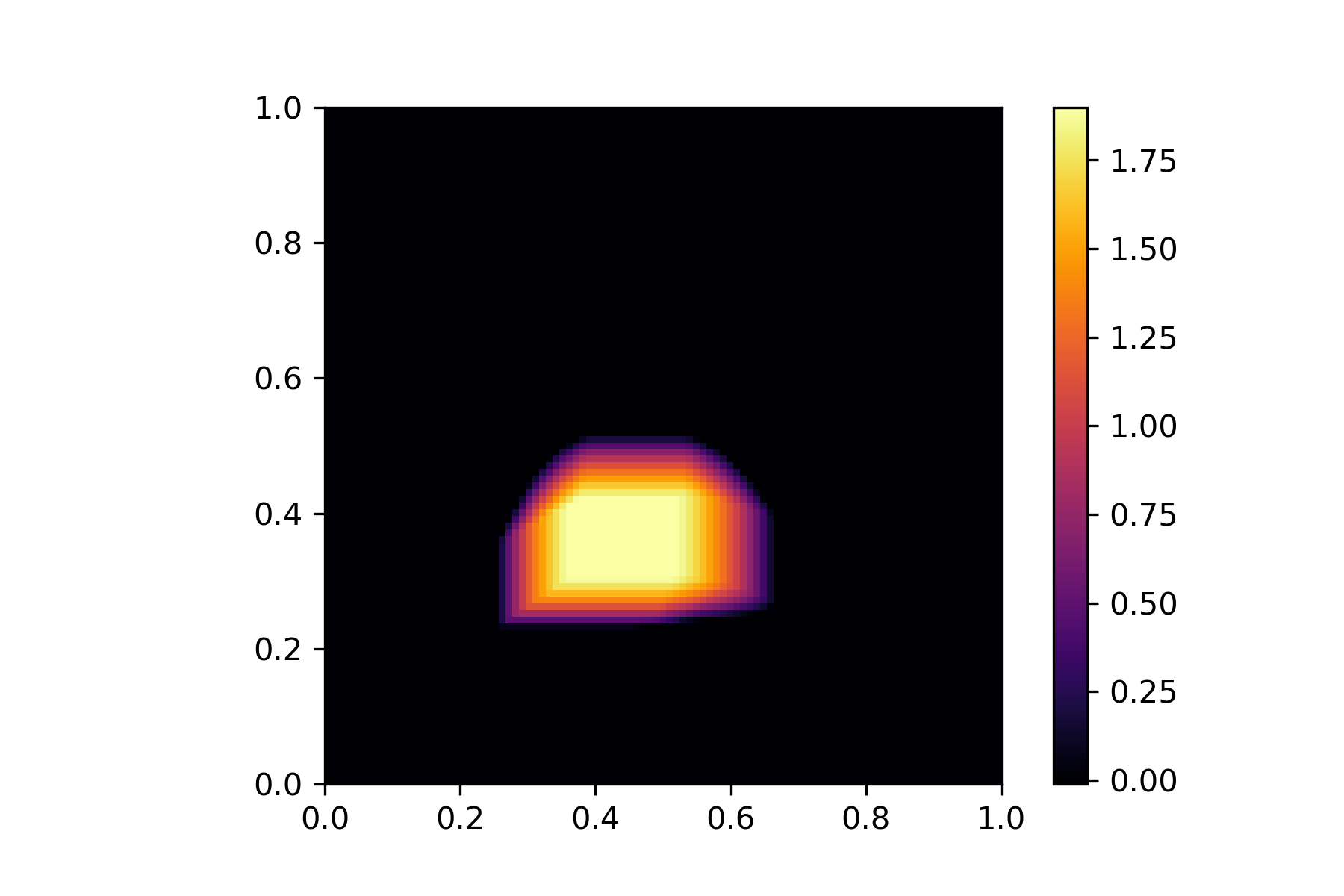}
        \caption{$\oTV_w + \ell^1_{\tilde{w}}$}
    \end{subfigure}\par
    \caption{Comparison of true generating source and the recovered source using weighted and unweighted total variation approaches.}
    \label{fig:advancedsources}
\end{figure}

\subsection*{Anisotropic conductivity}

We will now consider two problems involving anisotropic conductivities, that is, the forward operator now involved the boundary value problem \eqref{eq:ellipticPDE}-\eqref{eq:neumannBC} with $D$ equal to 

\begin{equation}
D(x_1,x_2) := D_1(x_1,x_2) = (1+9x_1)\begin{bmatrix} 5 & 0 \\ 0 & 1  \end{bmatrix}, \label{eq:conductivity1}    
\end{equation}
and
\begin{equation}
D(x_1,x_2) := D_2(x_1,x_2) = 
    \begin{cases}
        10, &x_1 \leq 0.4, x_2\leq 0.4, \\
        1, &x_1 > 0.4, x_2 > 0.4,
    \end{cases} \label{eq:conductivity2}
\end{equation}
respectively.

Figure \ref{fig:anisotropic1} displays simulations with the conductivity given in \eqref{eq:conductivity1}. The conductivity is strongly increasing in the $x_1$-direction, which might explain the somewhat more blurry solution seen in panel h). Nevertheless, the weighted inversion scheme is still able to recover the position of the true sources.

\begin{figure}[H]
    \centering
    \begin{subfigure}[b]{0.32\linewidth}        
        \centering
        \includegraphics[trim=40 30 40 30, clip, width=\linewidth]{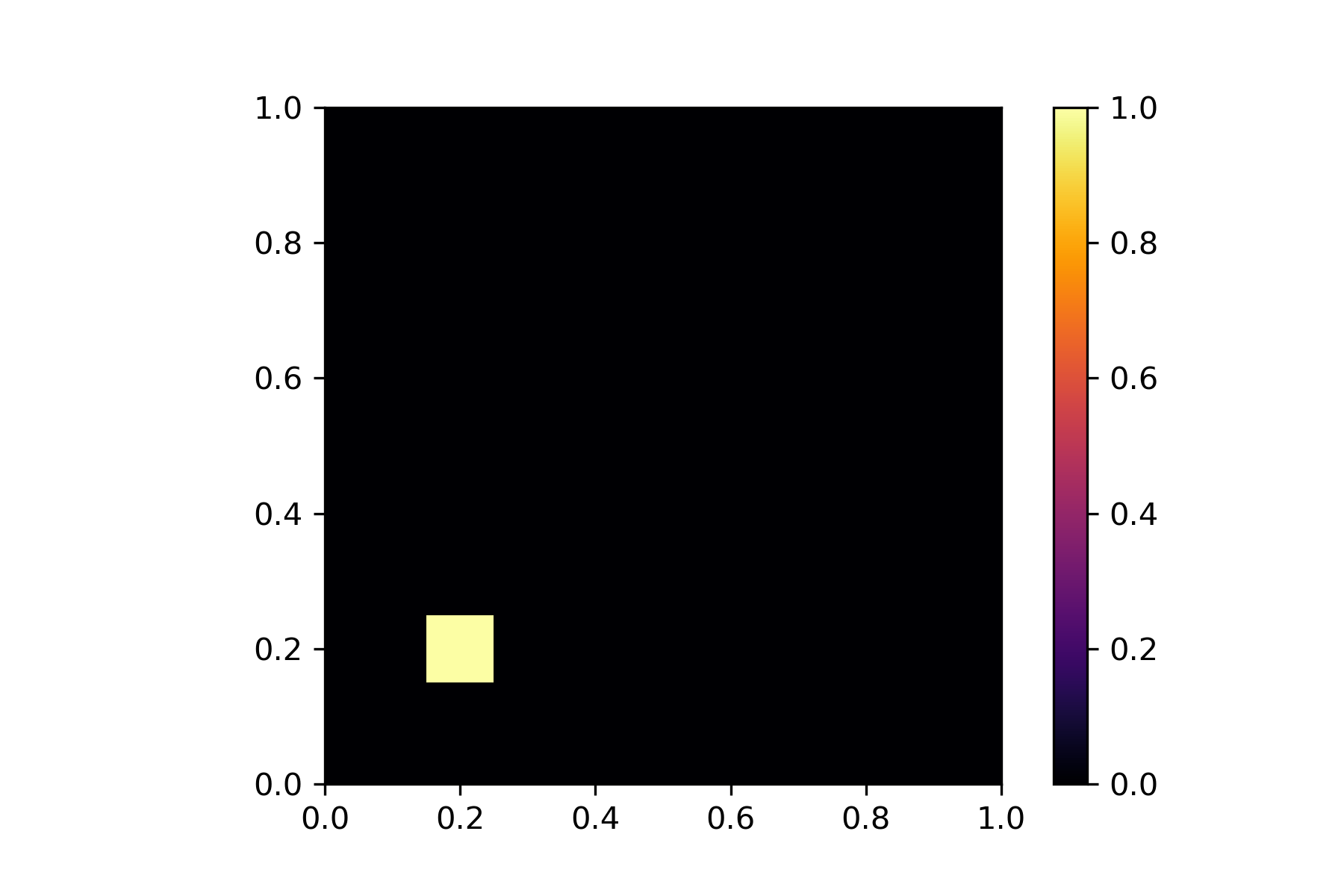}
        \caption{True source}
    \end{subfigure}
    \begin{subfigure}[b]{0.32\linewidth}        
        \centering
        \includegraphics[trim=40 30 40 30, clip, width=\linewidth]{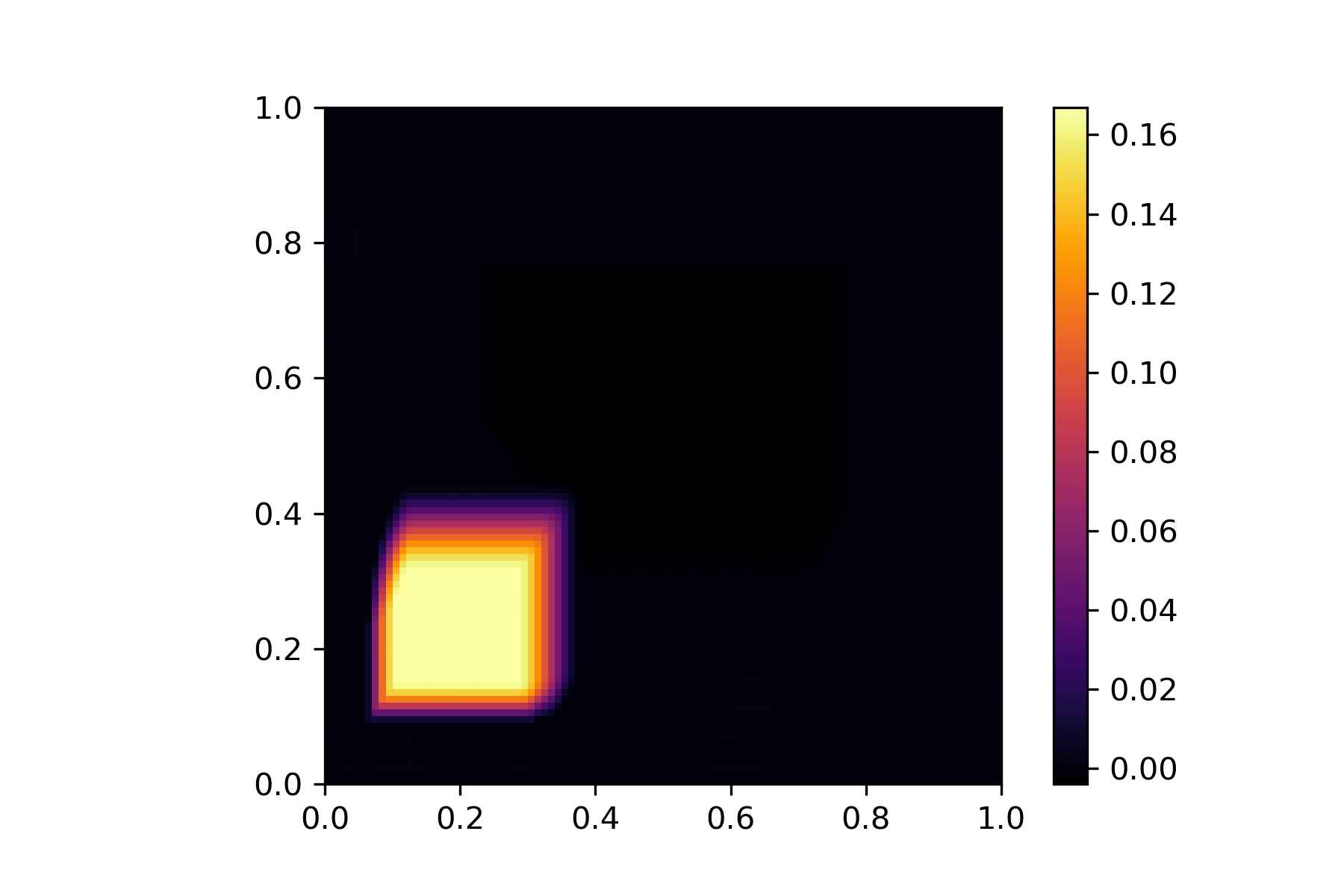}
        \caption{$TV_w$}
    \end{subfigure}
    \begin{subfigure}[b]{0.32\linewidth}        
        \centering
        \includegraphics[trim=40 30 40 30, clip, width=\linewidth]{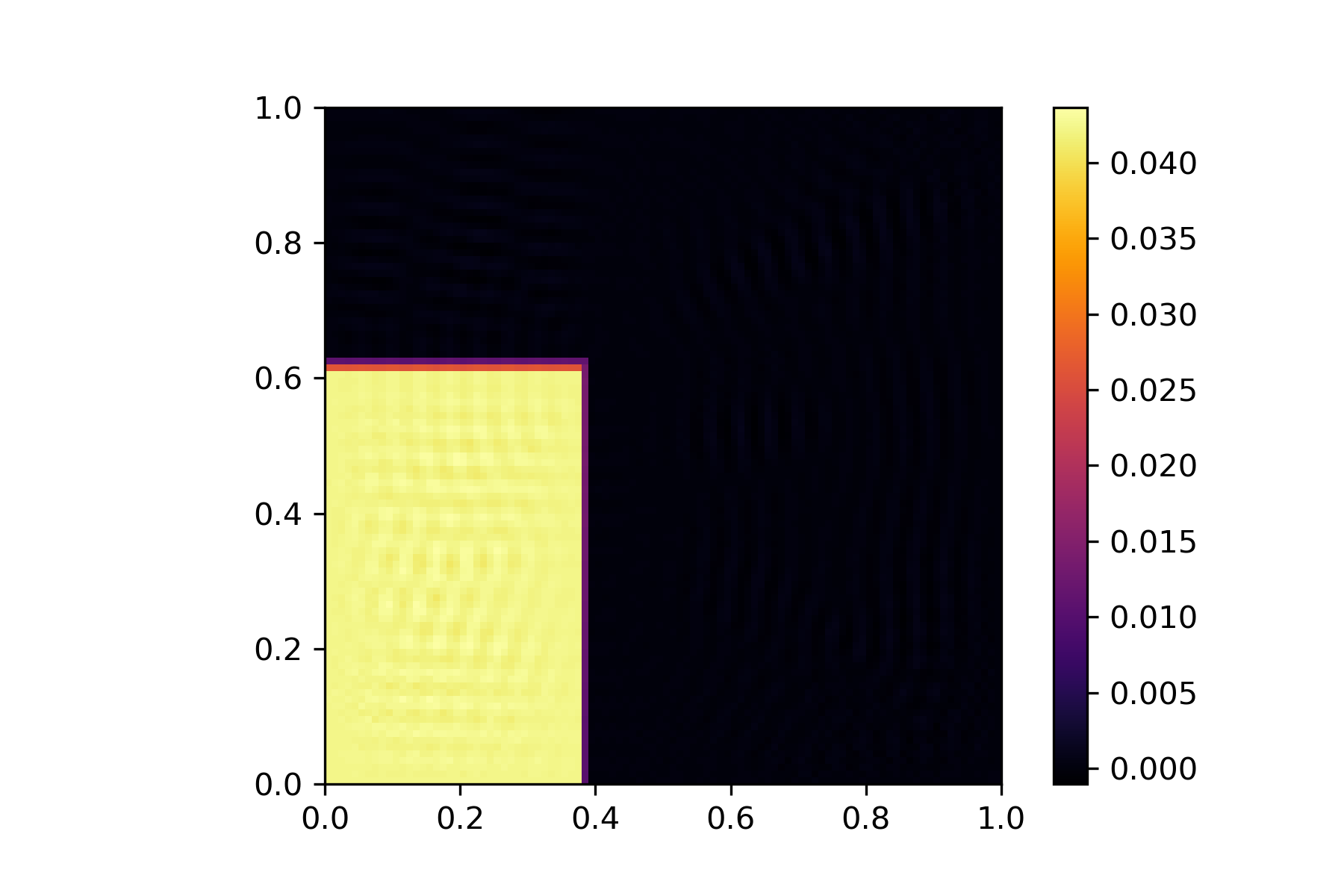}
        \caption{$TV_I$}
    \end{subfigure}\par
    \begin{subfigure}[b]{0.32\linewidth}        
        \centering
        \includegraphics[trim=40 30 40 30, clip, width=\linewidth]{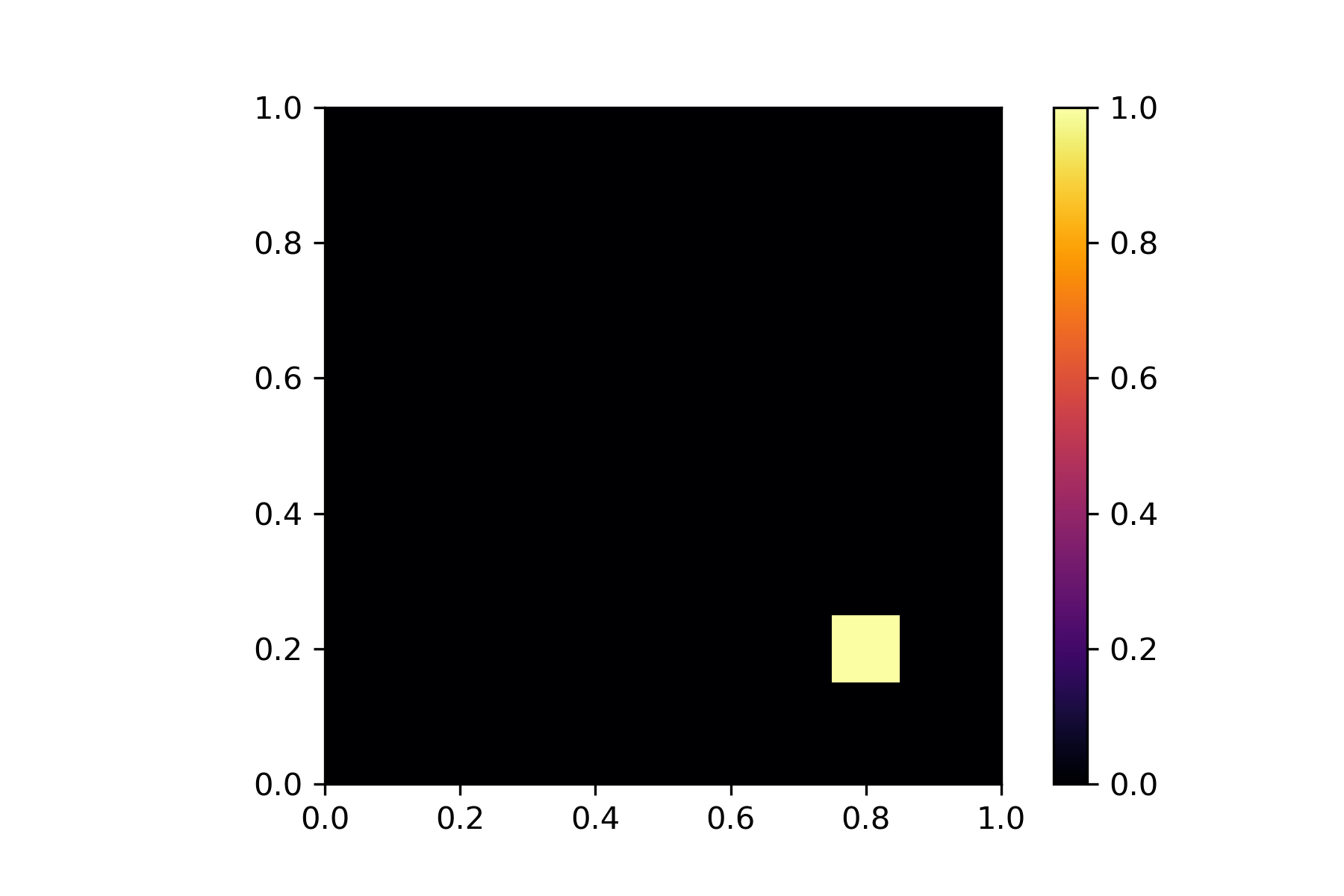}
        \caption{True source}
    \end{subfigure}
    \begin{subfigure}[b]{0.32\linewidth}        
        \centering
        \includegraphics[trim=40 30 40 30, clip, width=\linewidth]{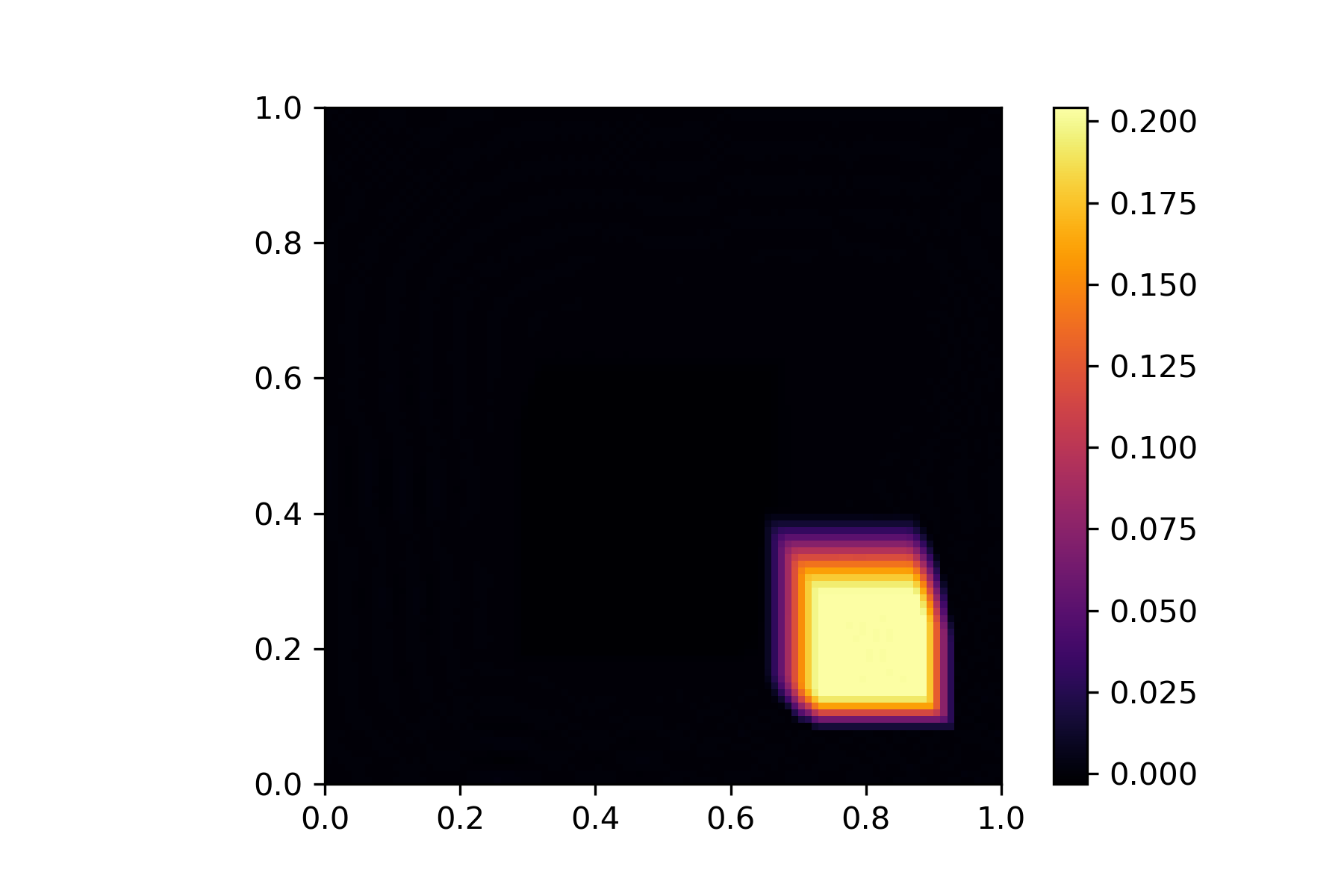}
        \caption{$TV_w$}
    \end{subfigure}
    \begin{subfigure}[b]{0.32\linewidth}        
        \centering
        \includegraphics[trim=40 30 40 30, clip, width=\linewidth]{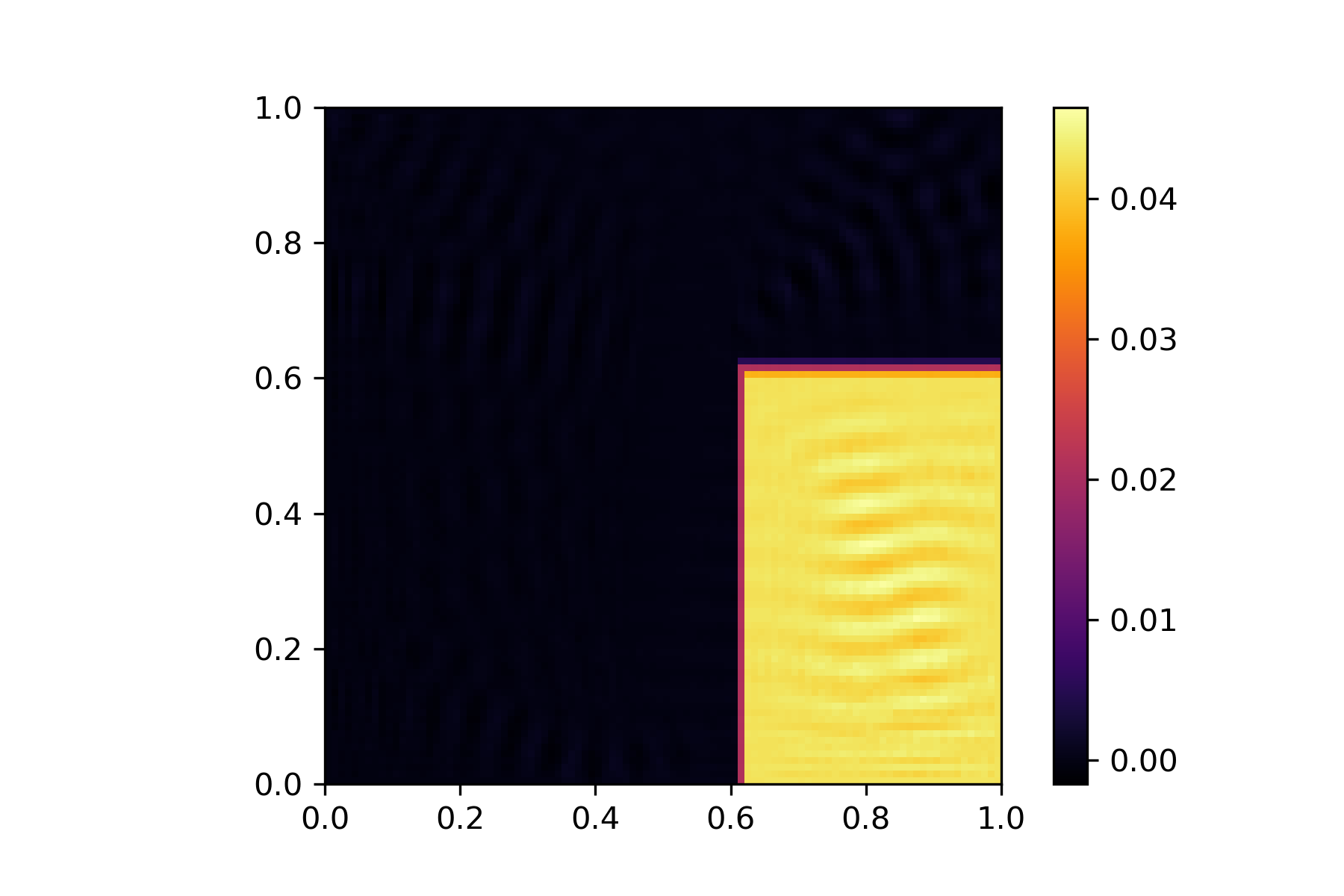}
        \caption{$TV_I$}
    \end{subfigure}\par
        \begin{subfigure}[b]{0.32\linewidth}        
        \centering
        \includegraphics[trim=40 30 40 30, clip, width=\linewidth]{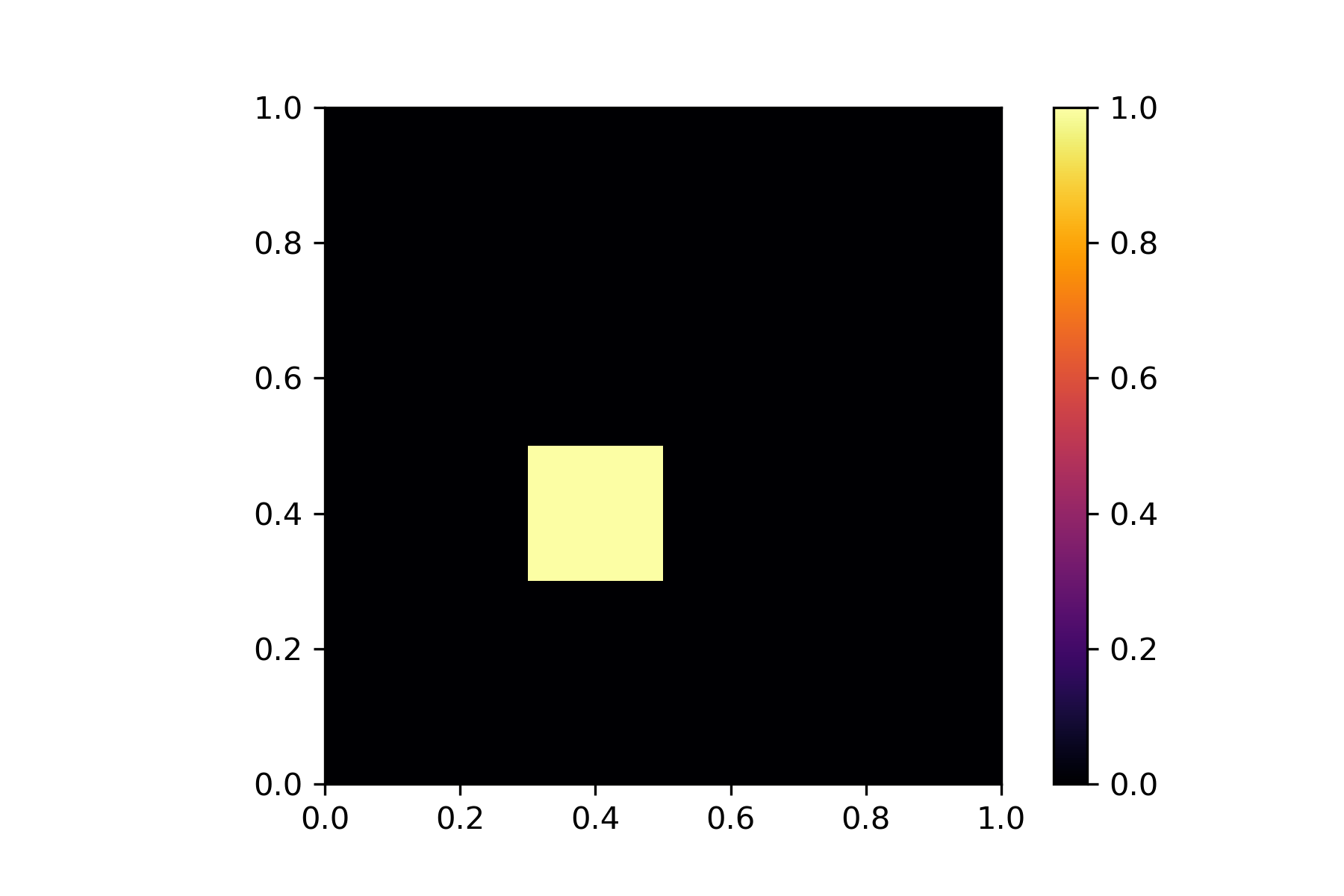}
        \caption{True source}
    \end{subfigure}
    \begin{subfigure}[b]{0.32\linewidth}        
        \centering
        \includegraphics[trim=40 30 40 30, clip, width=\linewidth]{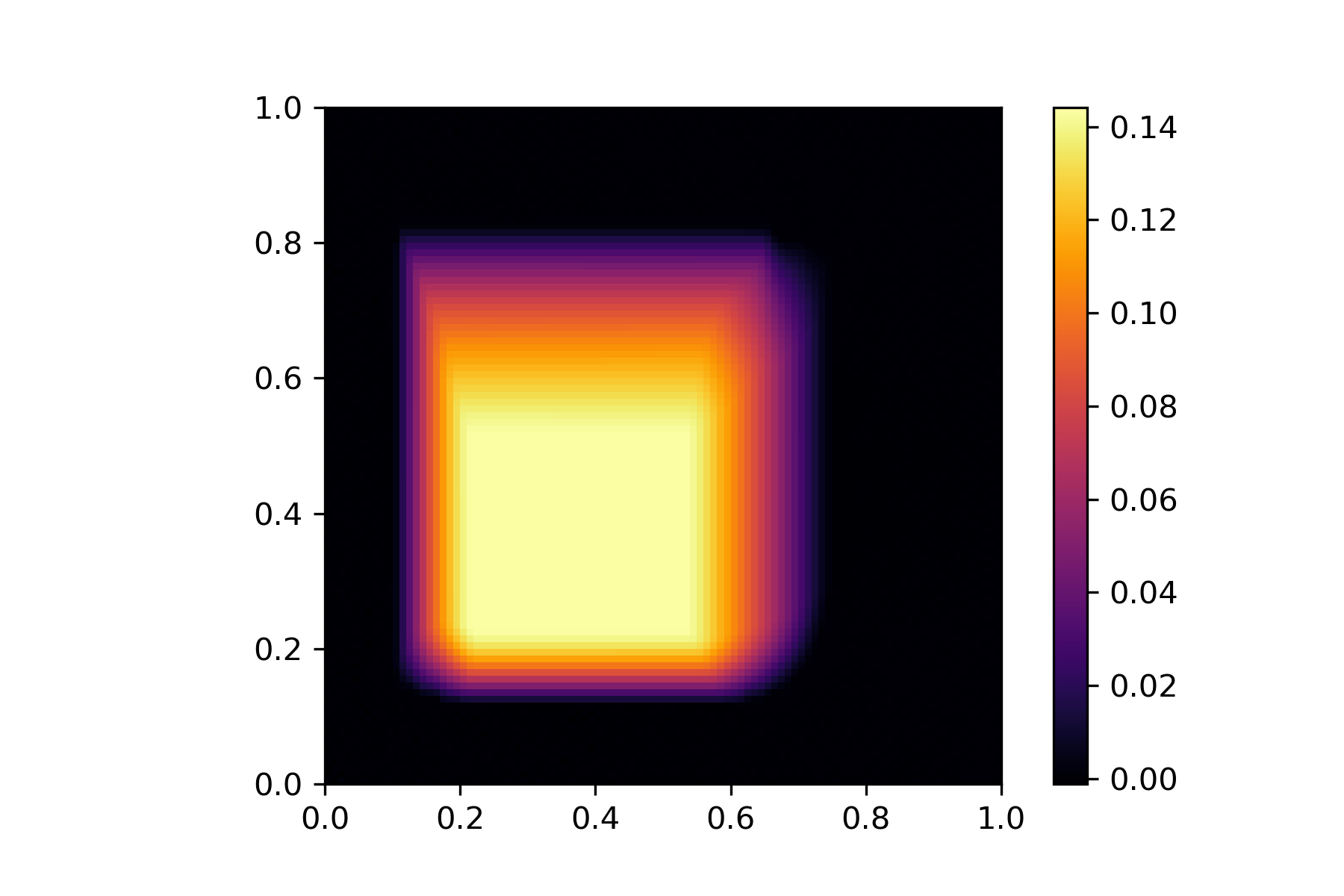}
        \caption{$TV_w$}
    \end{subfigure}
    \begin{subfigure}[b]{0.32\linewidth}        
        \centering
        \includegraphics[trim=40 30 40 30, clip, width=\linewidth]{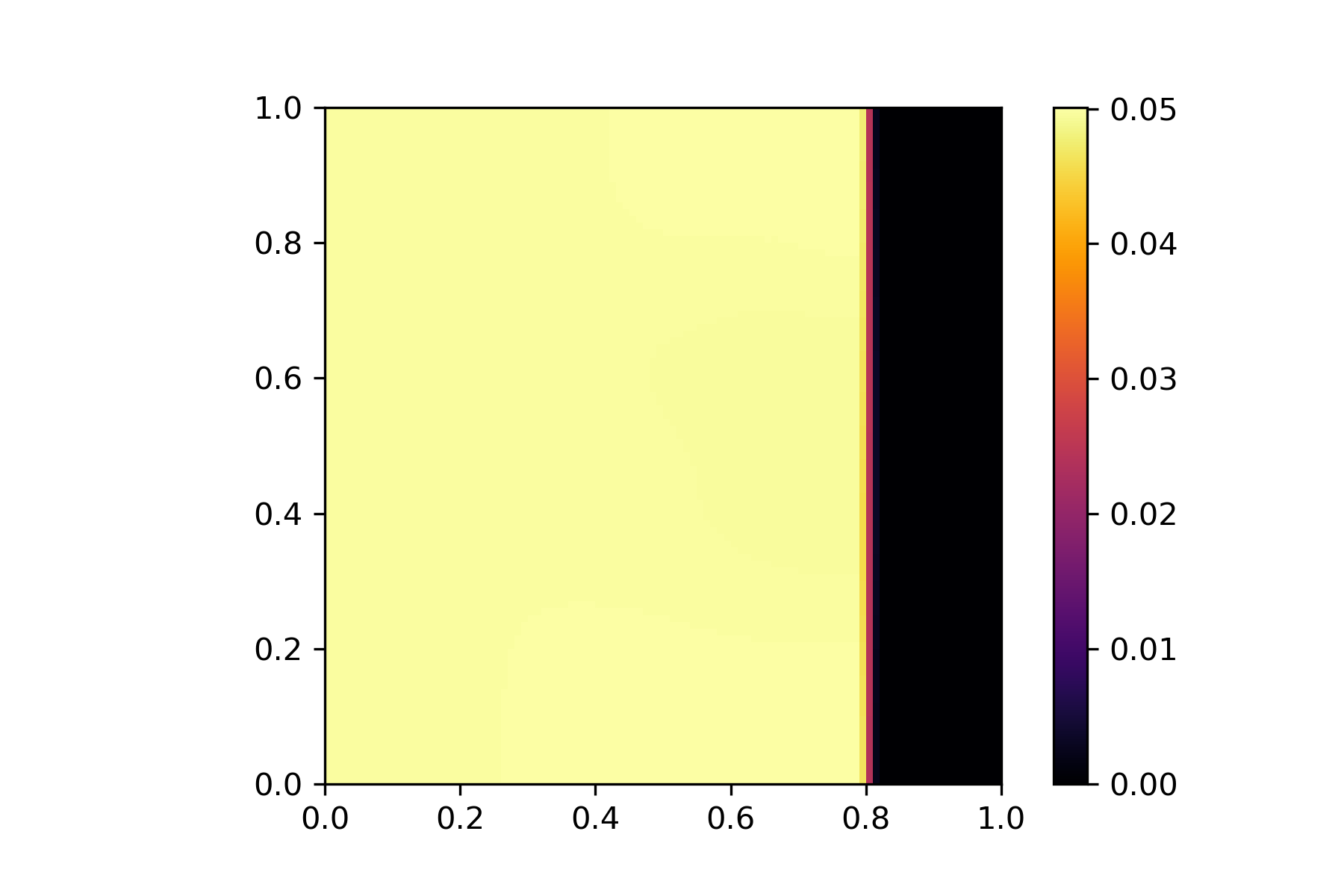}
        \caption{$TV_I$}
    \end{subfigure}\par
    \caption{Comparison of true generation source and inverse recovery involving the anisotropic conductivity $D_1$, cf. \eqref{eq:conductivity1}.}
    \label{fig:anisotropic1}
\end{figure}

The weighted TV algorithm for the case with discontinuous conductivity defined in \eqref{eq:conductivity2} also successfully locates the sources; see Figure \ref{fig:anisotropic2}. Note in particular that the source displayed in panel g) has support at the discontinuity, but this does not seem to affect the ability of the weighted scheme to detect the source.

\begin{figure}[H]
    \centering
    \begin{subfigure}[b]{0.32\linewidth}        
        \centering
        \includegraphics[trim=40 30 40 30, clip, width=\linewidth]{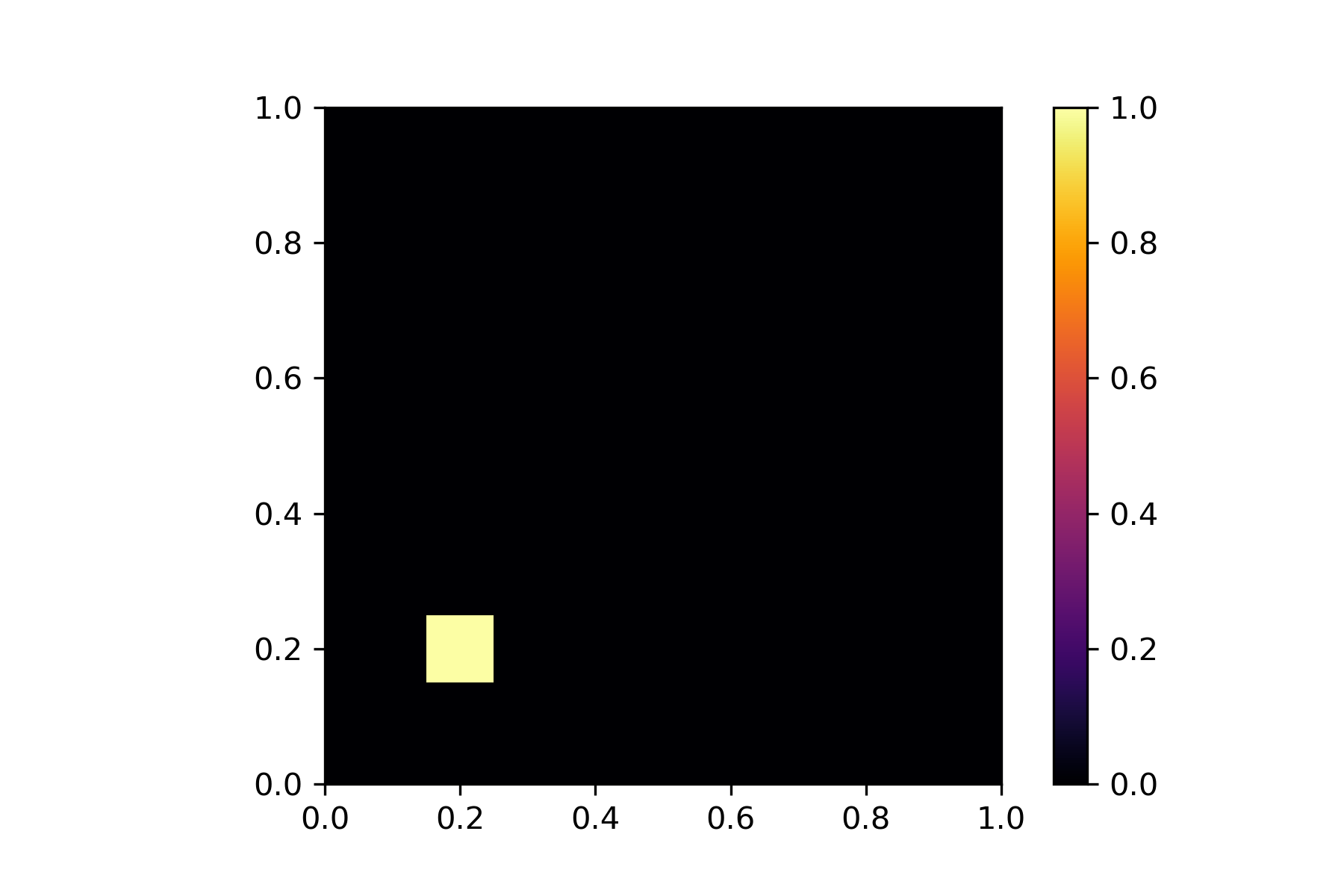}
        \caption{True source}
    \end{subfigure}
    \begin{subfigure}[b]{0.32\linewidth}        
        \centering
        \includegraphics[trim=40 30 40 30, clip, width=\linewidth]{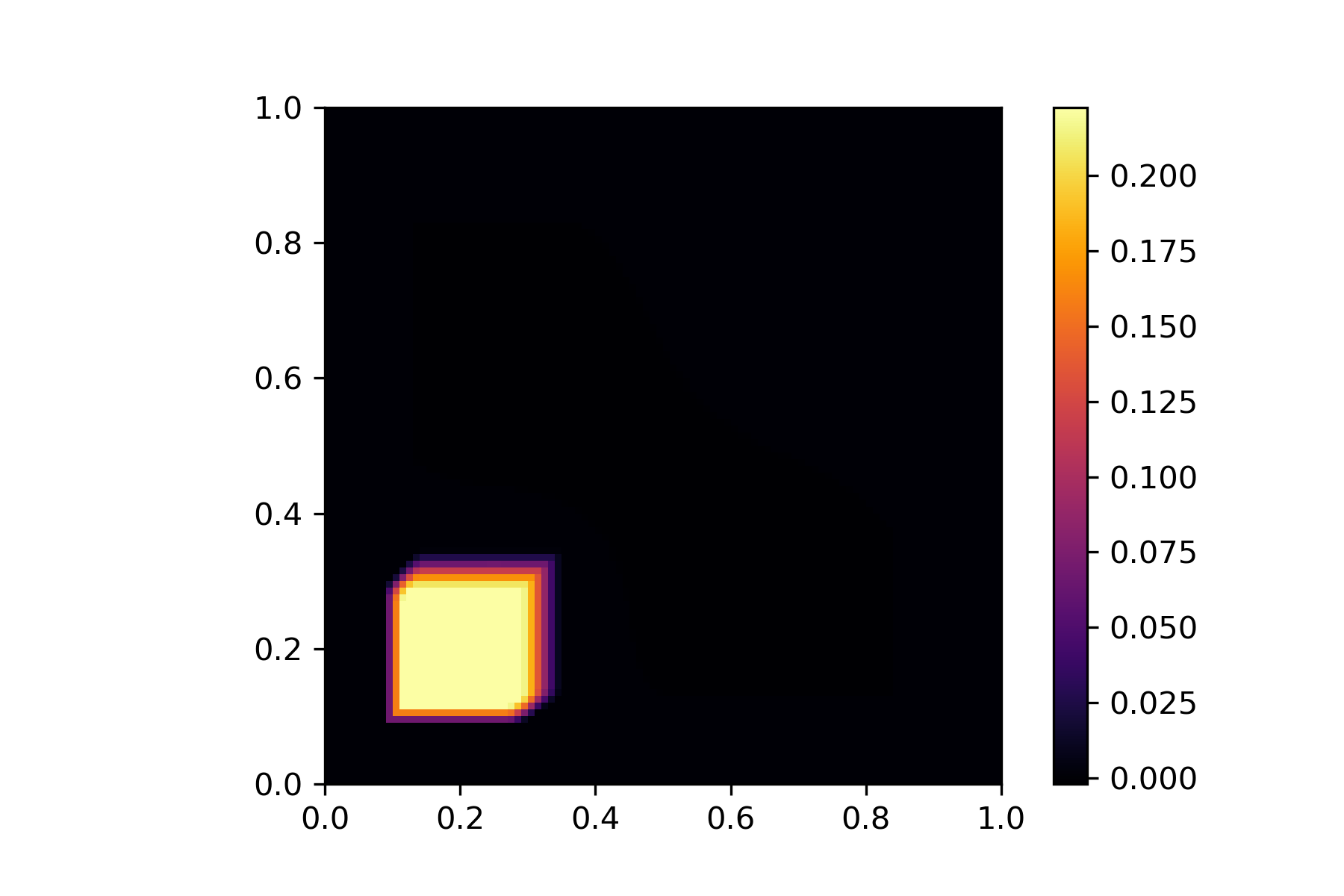}
        \caption{$TV_w$}
    \end{subfigure}
    \begin{subfigure}[b]{0.32\linewidth}        
        \centering
        \includegraphics[trim=40 30 40 30, clip, width=\linewidth]{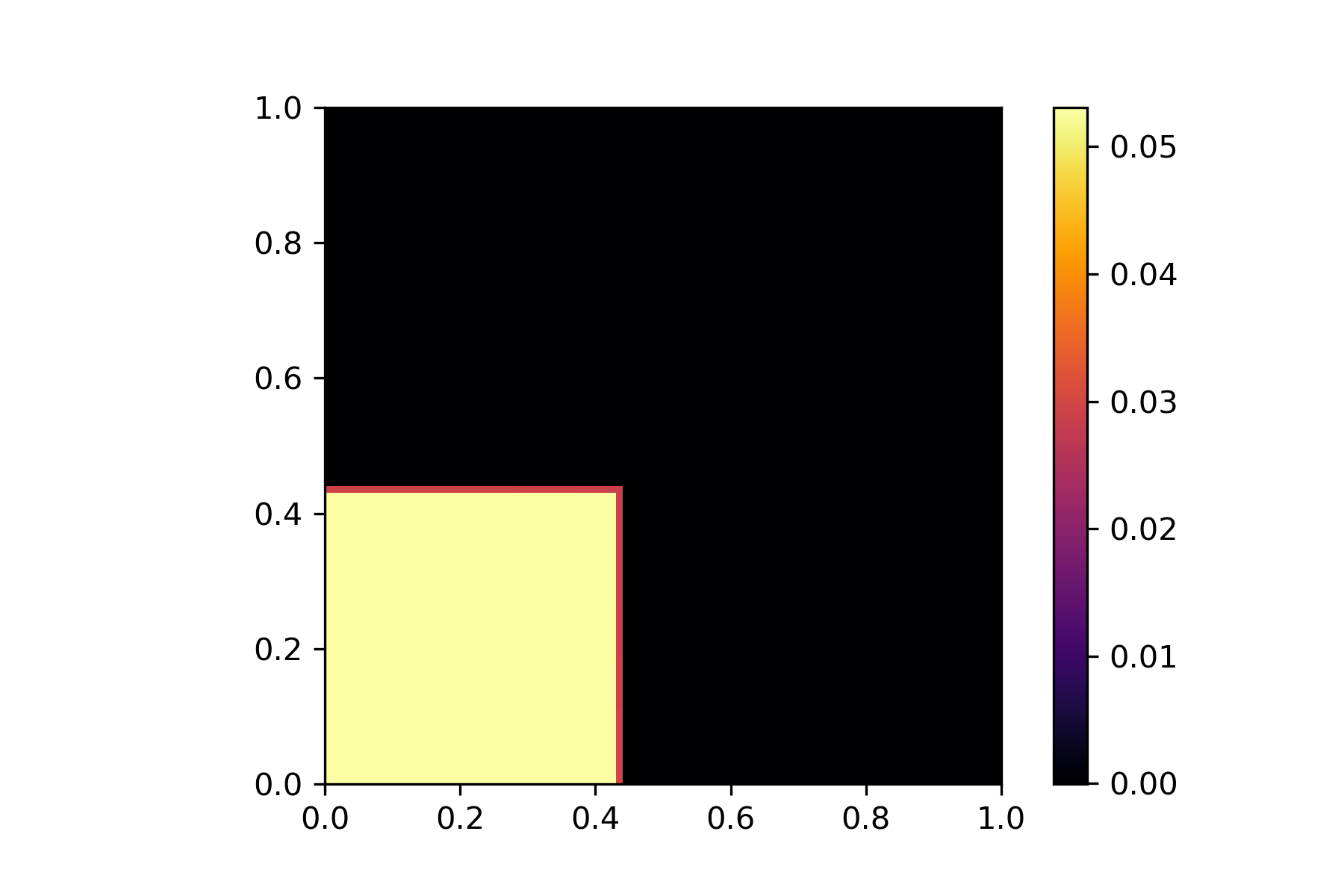}
        \caption{$TV_I$}
    \end{subfigure}\par
    \begin{subfigure}[b]{0.32\linewidth}        
        \centering
        \includegraphics[trim=40 30 40 30, clip, width=\linewidth]{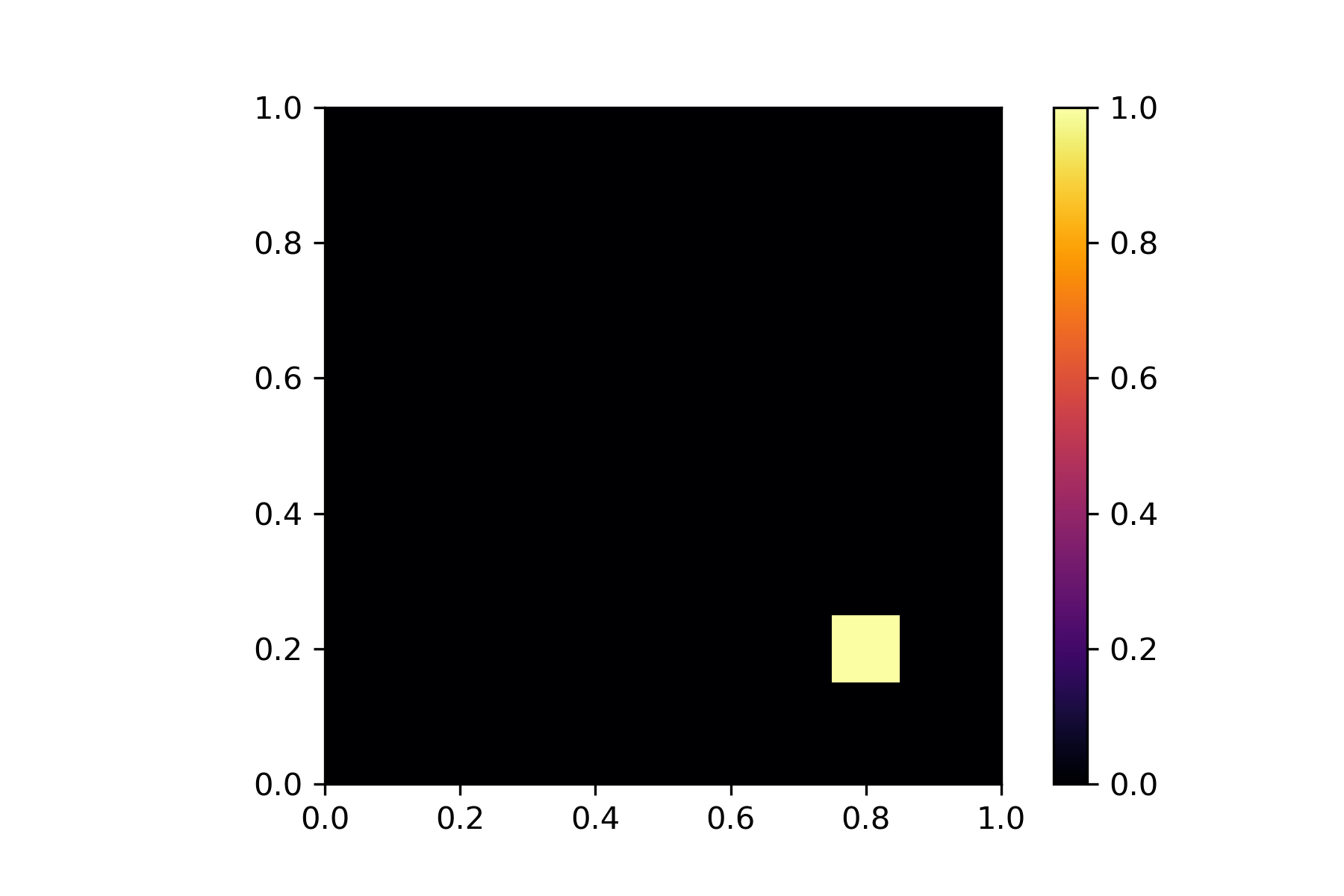}
        \caption{True source}
    \end{subfigure}
    \begin{subfigure}[b]{0.32\linewidth}        
        \centering
        \includegraphics[trim=40 30 40 30, clip, width=\linewidth]{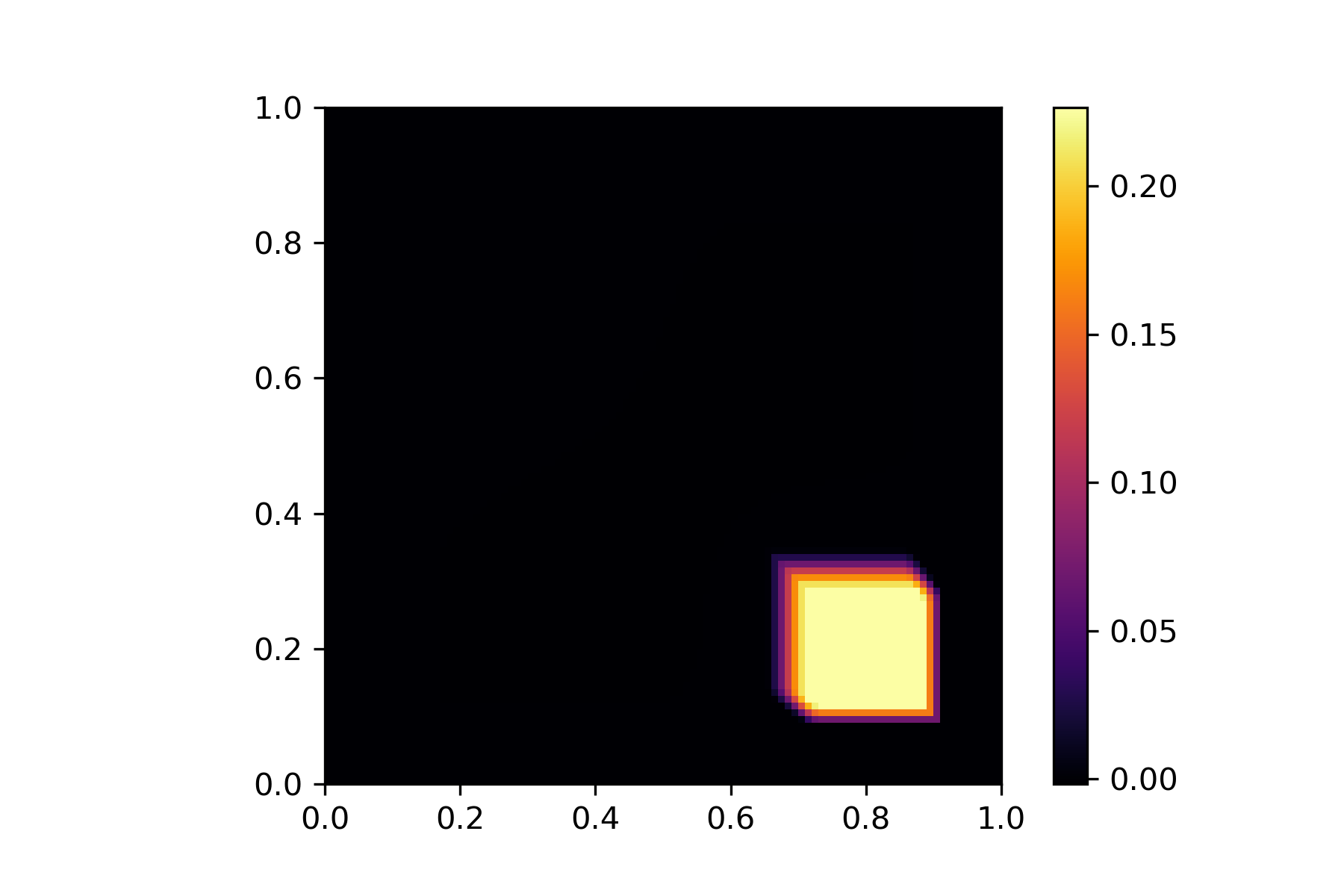}
        \caption{$TV_w$}
    \end{subfigure}
    \begin{subfigure}[b]{0.32\linewidth}        
        \centering
        \includegraphics[trim=40 30 40 30, clip, width=\linewidth]{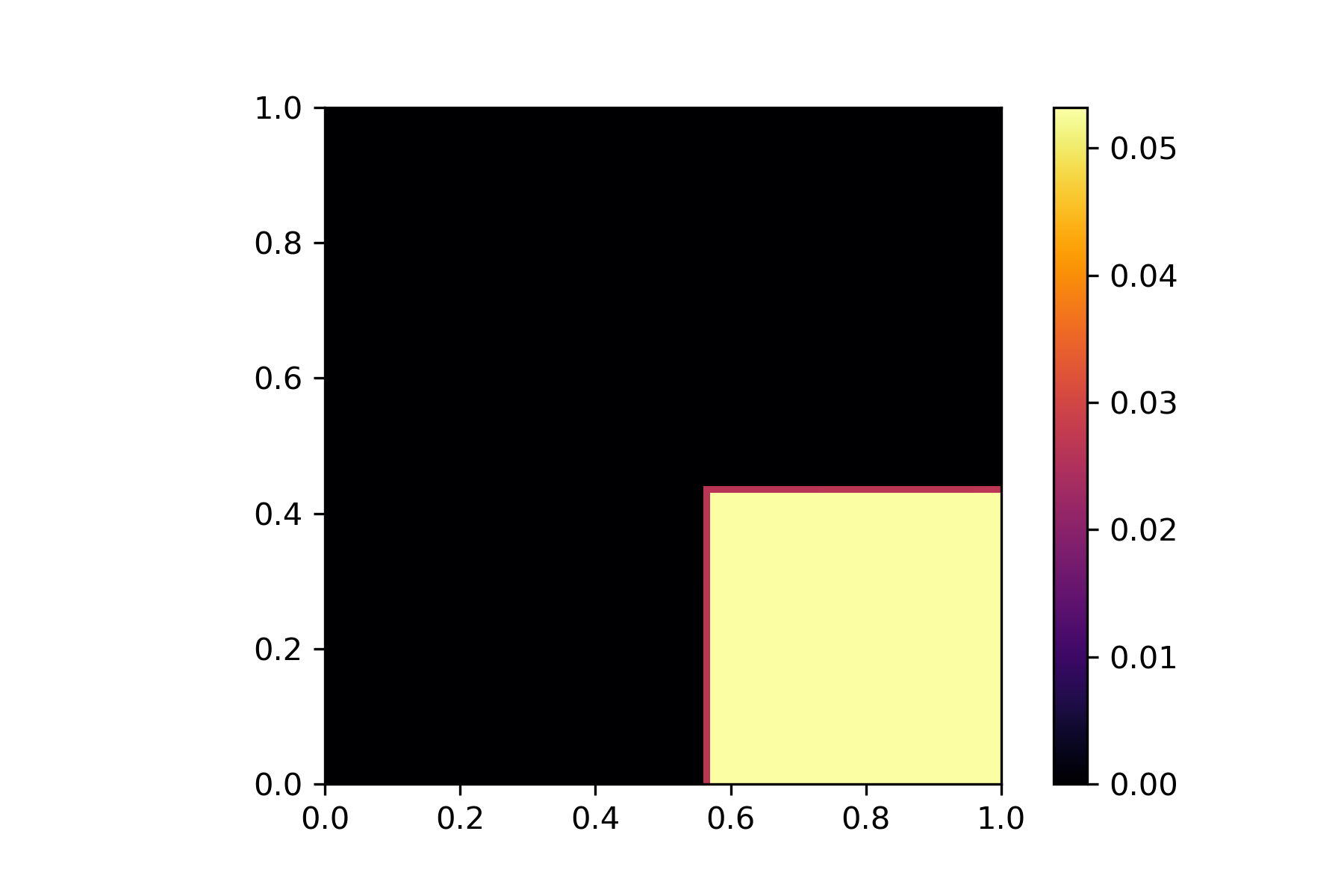}
        \caption{$TV_I$}
    \end{subfigure}\par
        \begin{subfigure}[b]{0.32\linewidth}        
        \centering
        \includegraphics[trim=40 30 40 30, clip, width=\linewidth]{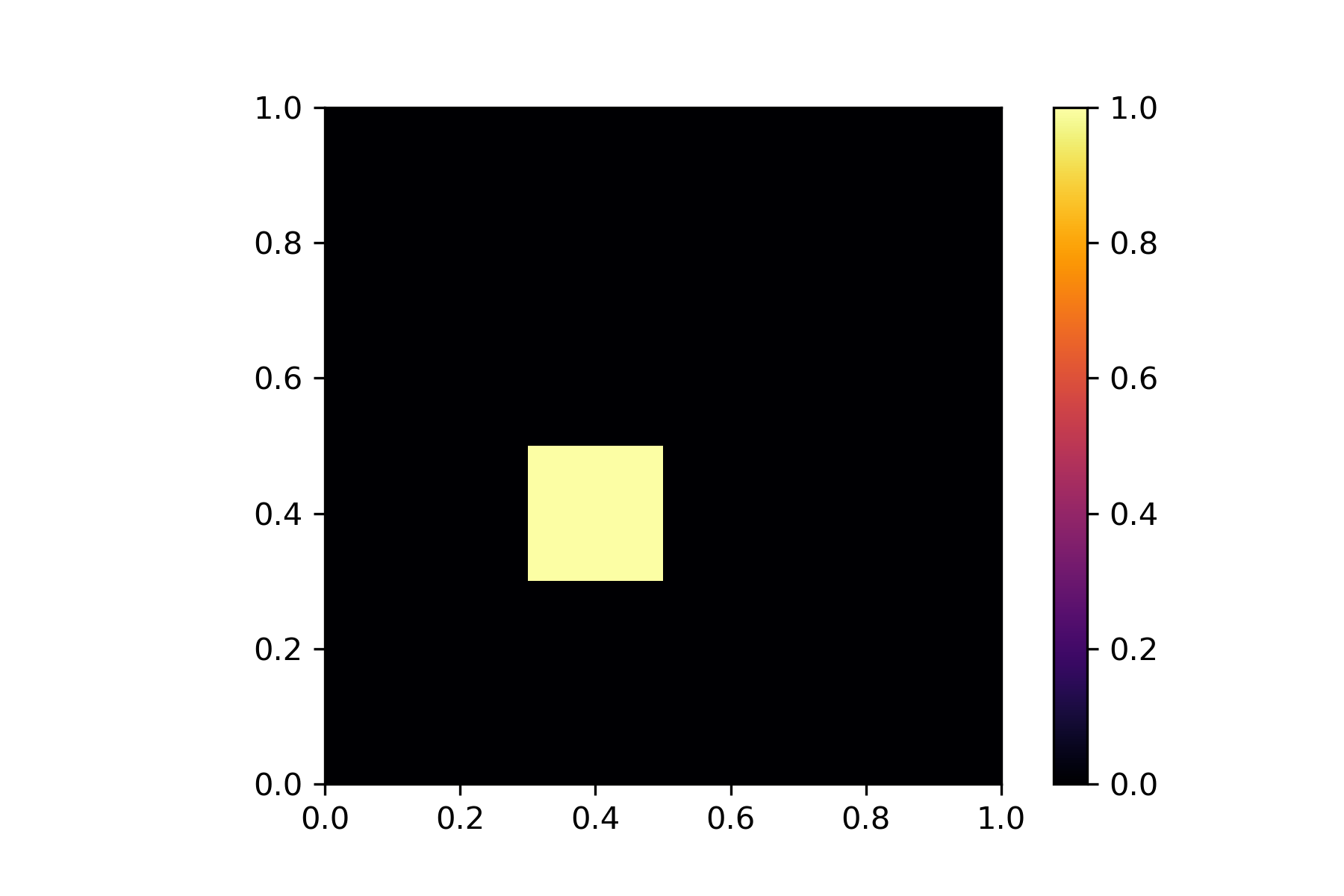}
        \caption{True source}
    \end{subfigure}
    \begin{subfigure}[b]{0.32\linewidth}        
        \centering
        \includegraphics[trim=40 30 40 30, clip, width=\linewidth]{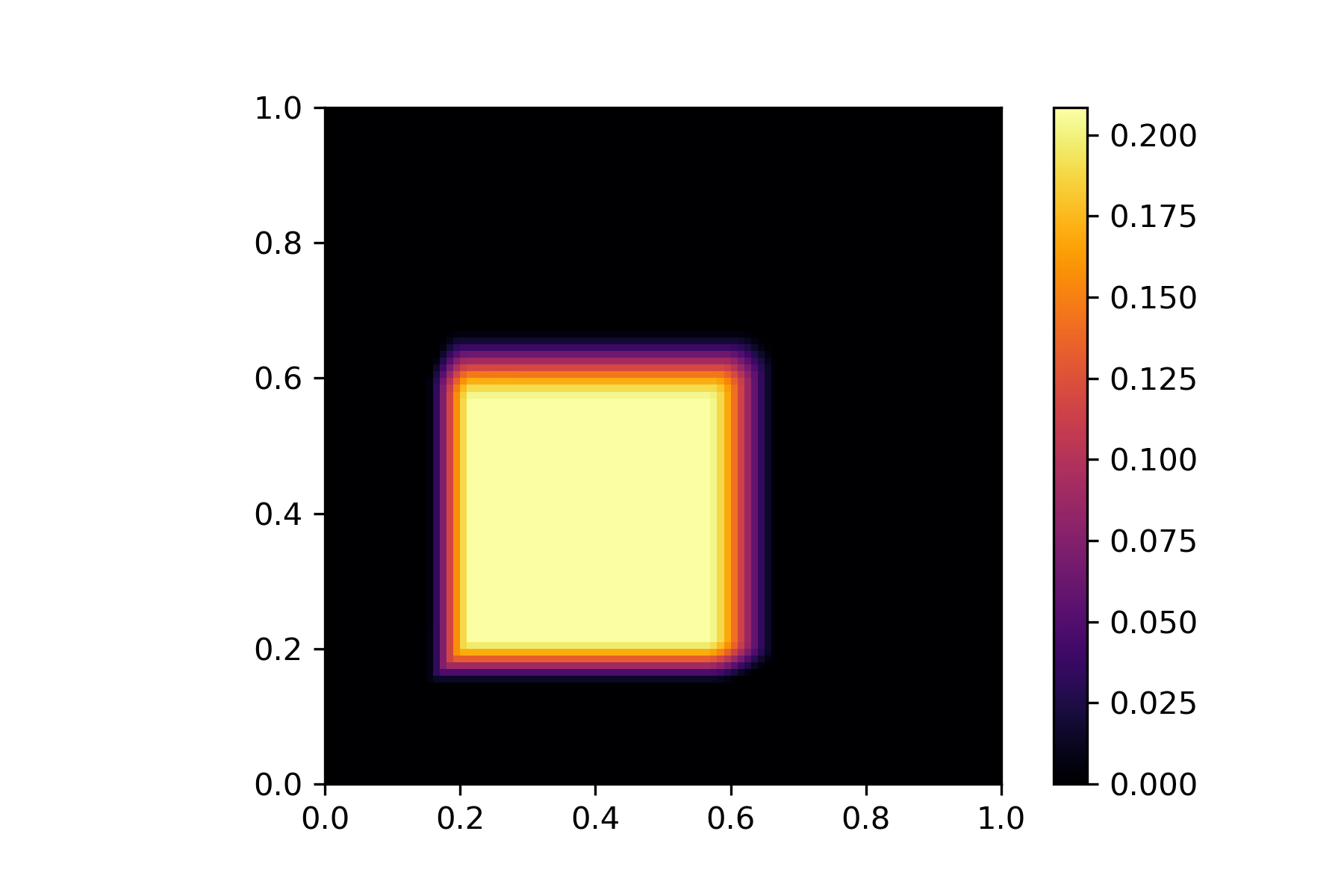}
        \caption{$TV_w$}
    \end{subfigure}
    \begin{subfigure}[b]{0.32\linewidth}        
        \centering
        \includegraphics[trim=40 30 40 30, clip, width=\linewidth]{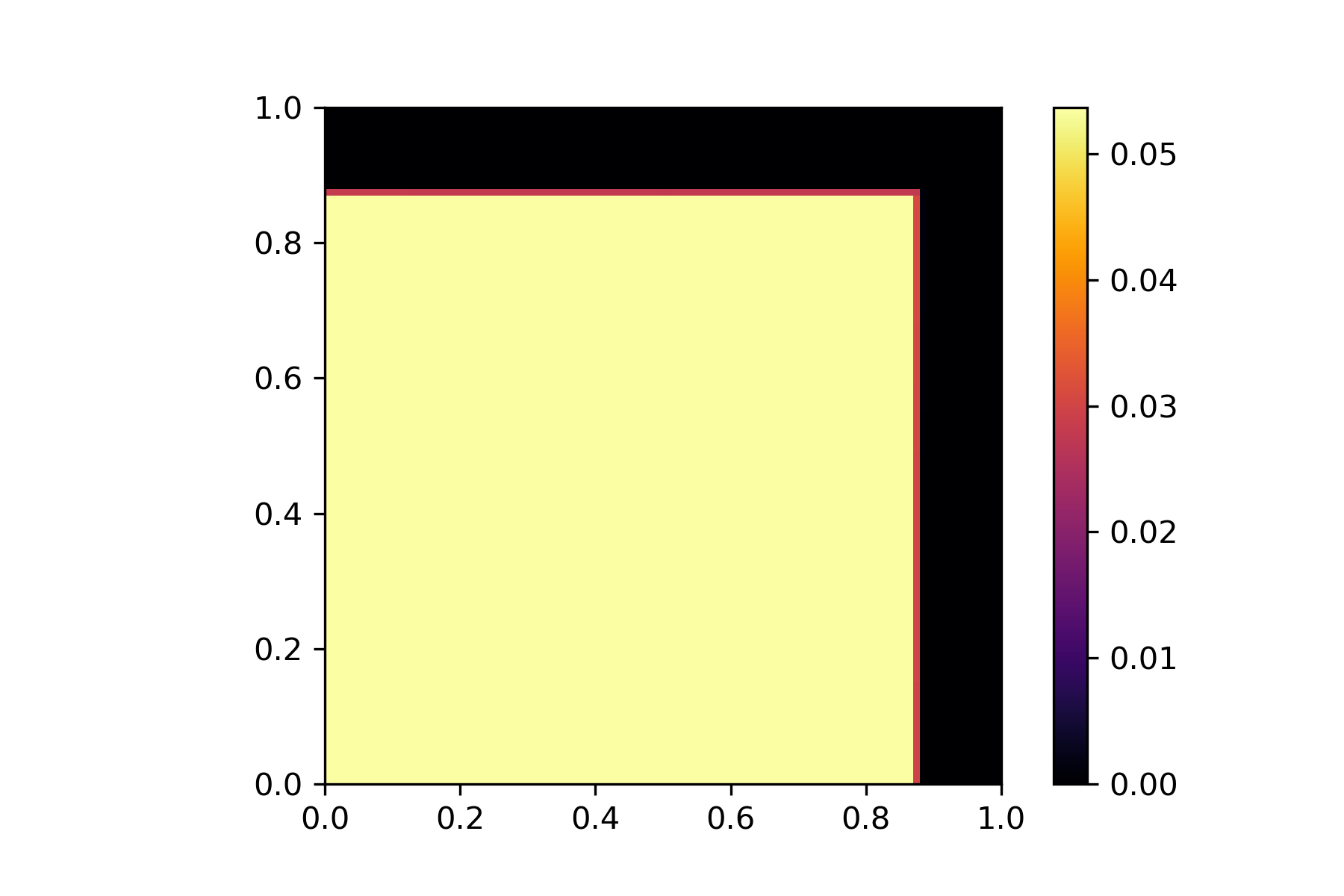}
        \caption{$TV_I$}
    \end{subfigure}\par
    \caption{Comparison of true generation source and inverse recovery involving the anisotropic conductivity $D_2$, cf. \eqref{eq:conductivity2}.}
    \label{fig:anisotropic2}
\end{figure}

\section{Conclusion}

In this paper we have studied the influence of weighting on TV-regularized solutions for inverse problems, which is introduced in order to reduce bias stemming from specific properties of the forward operator. We have seen that appropriate weighting can strongly improve the quality of reconstructions and the position of sources compared to standard TV methods. Compared to previously studied methods, such as weighted $\ell^1$, which prefers zero-dimensional structures, weighted TV enhances codimension-one structures and may introduce some bias on their shape. 
As a potential alternative, a hybrid method between TV and weighted $\ell^1$ can be considered, which seems feasible for inverse source problems with small structures. Naturally, the approach encounters problems for larger shapes.

We finally mention that in order to cure depth bias -  as appearing in inverse source problems in EEG/MEG or ECG due to the depth variation of the point-spread (respectively Green's) function - it is crucial to use an extended total variation, which also takes into account the boundary of the domain. The extended version effectively considers the total variation of a function extended by zero outside the domain, thus penalizing structures at the boundary as well.

In general, the choice of the optimal weights in TV and related methods is still a challenging question which might be solved by machine learning techniques for specific applications.

\section*{Acknowledgements}

MB acknowledges support from DESY (Hamburg, Germany), a member of the Helmholtz Association HGF.

\printbibliography

\appendix
\section{Regularized problem, 1D} \label{sec:regularized_problem_1D} 
In this appendix we prove Theorem \ref{thm:variational_rigorous_proof}. 
We need two lemmas. 

\begin{lemma}\label{lem:KtoC}
    Let $K: L^2(\Omega) \rightarrow L^2(E)$ be linear and let $Q$ be the orthogonal projection defined in \eqref{eq:ortho_proj}. Then, for any $f \in L^2(\Omega)$,
    \begin{equation*}
        K^*Kf = C^*Cf + \frac{(Kf,K1)}{\|K1\|^2}K^*K1, 
    \end{equation*}
    where $C$ is the mapping introduced in \eqref{def:C}. 
\end{lemma}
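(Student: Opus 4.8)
\textbf{Proof proposal for Lemma \ref{lem:KtoC}.}
The plan is to exploit that $Q$ is an orthogonal projection, so it is self-adjoint and idempotent, and then split the identity on $L^2(E)$ into $Q$ plus the complementary rank-one projection onto $\mathrm{span}\{K1\}$. First I would record the two elementary facts $Q^*=Q$ and $Q^2=Q$, and introduce $P := I-Q$, which is the orthogonal projection onto $\{tK1 \,|\, t\in\mathbb{R}\}$. Since this space is one-dimensional, $P$ has the explicit form
\begin{equation*}
    Pg = \frac{(g,K1)}{\|K1\|^2}\,K1, \qquad g \in L^2(E),
\end{equation*}
which follows from the standard formula for projection onto a line (well-defined because $K1\neq 0$; if $K1=0$ the statement is trivial with $C=K$).

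Next I would compute $C^*C$. Writing $C=QK$ and using linearity and $Q^*=Q$, $Q^2=Q$,
\begin{equation*}
    C^*C = (QK)^*(QK) = K^*Q^*QK = K^*Q^2K = K^*QK.
\end{equation*}
Then I would decompose $K^*K$ by inserting $I = Q + P$ between the factors:
\begin{equation*}
    K^*Kf = K^*(Q+P)Kf = K^*QKf + K^*PKf = C^*Cf + K^*(PKf).
\end{equation*}
Finally, substituting the explicit form of $P$ applied to $g=Kf$ gives $PKf = \frac{(Kf,K1)}{\|K1\|^2}K1$, hence $K^*PKf = \frac{(Kf,K1)}{\|K1\|^2}K^*K1$, which is exactly the claimed identity.

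There is no real obstacle here; the only points requiring a line of care are justifying that $P$ has the stated rank-one form (one-dimensionality of the target space of $I-Q$) and the bookkeeping $Q^*=Q$, $Q^2=Q$ that collapses $K^*Q^*QK$ to $K^*QK$. Everything else is direct substitution.
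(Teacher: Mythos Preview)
Your proof is correct and follows essentially the same route as the paper: decompose the identity on $L^2(E)$ as $Q$ plus the rank-one projection onto $\mathrm{span}\{K1\}$, apply $K^*$ to the resulting decomposition of $Kf$, and collapse $K^*Q^*QK$ to $K^*QK$ via $Q^*=Q$, $Q^2=Q$. The paper's argument is line-for-line the same, just phrased as ``$Kf = QKf + \mathrm{proj}_{K1}(Kf)$'' rather than introducing the symbol $P$; your parenthetical remark about the degenerate case $K1=0$ is a small addition the paper omits.
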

\begin{proof}
  Note that we can decompose any image $Kf$ by
  \begin{equation*}
      Kf = QKf + \textnormal{proj}_{K1}(Kf) = QKf + \frac{(Kf,K1)}{\|K1\|^2}K1,
  \end{equation*}
  and consequently
  \begin{equation*}
      K^*Kf = K^*QKf + \frac{(Kf,K1)}{\|K1\|^2}K^*K1.
  \end{equation*}
  The result follows by observing that $C^*C = (QK)^*QK = K^*Q^*QK=K^*QK$, keeping in mind that $Q$ is an orthogonal projection.
\end{proof}

Inspired by the characterization of the subdifferential $\partial TV(f)$ presented in, e.g., \cite{bredies23, vaillo04} we will show that 
\begin{lemma}\label{lem:subgrad}
    Assume that $\Omega =(0,1)$ and consider the weighted TV-functional $TV_w$ defined in \eqref{eq:TVw}. If $p = - z' \in L^2(\Omega)$ satisfies
    \begin{enumerate}[(i)]
      \item $|z(x)|\leq w(x) \ \forall x\in(0,1)$, 
      \item $\int_0^1 z Df(x) = TV_w(f),$ and 
      \item $Tz = 0$, where $T$ is the normal trace operator,
    \end{enumerate}
then $ p \in \partial TV_w(f).$
Furthermore, $p$ has zero mean, i.e.,
  $      \int_0^1 p(x) dx = 0.$
\end{lemma}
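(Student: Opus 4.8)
The plan is to verify the defining inequality of the subdifferential directly: transfer the $L^2$-pairing $\langle p,\cdot\rangle$ onto the gradient measure via integration by parts, then invoke hypotheses (i) and (ii). Throughout I regard $TV_w$ as the functional on $L^2(0,1)$ that equals $\int_0^1 w(x)|Df(x)|$ on $BV(0,1)$ (cf. \eqref{def:TVw2}) and $+\infty$ elsewhere, so that $\partial TV_w(f)$ is the $L^2$-subdifferential — this is the notion used when Lemma \ref{lem:subgrad} is applied in the proof of Theorem \ref{thm:variational_rigorous_proof}.

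First I would record the regularity that the hypotheses force. Since $z\in L^2(0,1)$ and $z'=-p\in L^2(0,1)$, we have $z\in H^1(0,1)$, hence $z$ admits an absolutely continuous representative on $[0,1]$; in one dimension the normal trace of $z$ at the endpoints is $\pm z(1)$ and $\mp z(0)$, so hypothesis (iii) is equivalent to $z(0)=z(1)=0$. The zero-mean assertion is then immediate: $\int_0^1 p\,dx=-\int_0^1 z'\,dx=z(0)-z(1)=0$.

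For the inclusion $p\in\partial TV_w(f)$ I must show $TV_w(g)\ge TV_w(f)+\int_0^1 p\,(g-f)\,dx$ for every $g\in L^2(0,1)$. When $g\notin BV(0,1)$ the right-hand side is $-\infty$ and there is nothing to prove, so fix $g\in BV(0,1)$ and put $h=g-f\in BV(0,1)$. Integration by parts for the product of the absolutely continuous function $z$ and the $BV$ function $h$ gives $\int_0^1 p\,h\,dx=-\int_0^1 z'h\,dx=\int_0^1 z\,Dh-\bigl[zh\bigr]_0^1=\int_0^1 z\,Dh$, the boundary term vanishing because $z(0)=z(1)=0$. Writing $Dh=Dg-Df$ and using hypothesis (ii) yields $\int_0^1 p\,h\,dx=\int_0^1 z\,Dg-TV_w(f)$. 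Finally, hypothesis (i) together with the measure representation \eqref{def:TVw2} gives $\int_0^1 z\,Dg\le\int_0^1|z|\,|Dg|\le\int_0^1 w\,|Dg|=TV_w(g)$. Combining the last two displays gives $\int_0^1 p\,(g-f)\,dx\le TV_w(g)-TV_w(f)$, i.e. $p\in\partial TV_w(f)$.

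The main technical point to handle carefully is the integration-by-parts identity $\int_0^1 z\,Dh=-\int_0^1 z'h\,dx+\bigl[zh\bigr]_0^1$ for $z\in H^1(0,1)$ and $h\in BV(0,1)$. I would either cite a Gauss--Green formula for $BV$ functions (with the boundary term read off from the one-sided traces of $h$, which is harmless here since $z$ vanishes at both endpoints), or approximate $h$ in the strict $BV$ topology by smooth functions, for which the formula is classical and both sides converge. A secondary, purely bookkeeping point is to make explicit that the subdifferential is taken in $L^2(0,1)$, which is what licenses dismissing the case $g\notin BV(0,1)$ by the convention $TV_w\equiv+\infty$ there.
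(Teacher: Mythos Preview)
Your proof is correct and, for the subdifferential inclusion, follows the same integration-by-parts route as the paper: pass from $(p,g-f)$ to $\int z\,Dg - TV_w(f)$ via (ii), (iii), then bound $\int z\,Dg$ by $TV_w(g)$ via (i). The only difference is the zero-mean claim: you obtain it directly from $\int_0^1 p=-\int_0^1 z'=z(0)-z(1)=0$ using the trace condition, whereas the paper deduces it \emph{a posteriori} from $p\in\partial TV_w(f)$ by testing the subgradient inequality against constant functions $g_k$; your argument is shorter and more transparent.
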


\begin{proof}
    By definition of the subdifferential,
    $$p \in \partial TV_w(f) \iff TV_w(g) \geq TV_w(f) + (p, g-f)\quad \forall g \in BV(0,1).$$
    Using the assumptions for $z$ above, we get for any $g \in BV(0,1)$
    \begin{eqnarray*}
        TV_w(f) + (p,g-f) &=&  - \int_0^1 z'(x)g(x) dx dx 
        = \int_0^1z(x)Dg(x) \\
        &\leq& \int_0^1 |z(x)|Dg(x)| 
    \leq \int_0^1 w(x)|Dg(x)| \\ 
        &=& TV_w(g),
    \end{eqnarray*}
    where the integration by parts is justified, e.g., by the generalized Green's theorem \cite[Theorem 1.9]{anzellotti83}. We employed \eqref{def:TVw2} in the last equality. 

    For the zero mean property, simply choose $g_k \in BV(0,1), \ g_k(x) = -k \, \textnormal{sgn} \left( \int_0^1 z'(x)\right)$ above to get 
    \begin{eqnarray*}
        TV_w(f) + (p,g_k-f) &=& TV_w(f) - \int_0^1 z'(x)g_k(x)dx - TV_w(f) \\
        &=& k \, \textnormal{sgn} \left( \int_0^1 z'(x)\right) \int_0^1 z'(x)dx, 
    \end{eqnarray*}
    which we can make as large as we want, by letting $k \rightarrow \infty$, unless $-z' = p$ has zero mean. Hence, $-z'=p$ must have zero integral because $TV_w(g_k)=0$ for any constant $k$.  
\end{proof}

\subsubsection*{Proof of Theorem \ref{thm:variational_rigorous_proof}.}
\begin{proof} 
We will prove the theorem by verifying that $$f_\alpha = \gamma\rho \bar{H}_{x^*}+\eta$$ satisfies the first-order optimality condition
\begin{equation} \label{eq:FONC}
    K^*K(\rho \bar{H}_{x^*}+\tau) - K^*Kf \in \alpha \partial TV_w(f)
\end{equation}
associated with \eqref{eq:variational_problem} for specific choices of $\gamma$ and $\eta$ (cf. Theorem \ref{thm:variational_rigorous_proof}).

Since $TV_w$ is one-homogeneous and does not see constants, it follows that, for $\gamma > 0$, 
\begin{equation*}
    \partial TV_w(f_\alpha) = \partial TV_w(\gamma\rho\bar{H}_{x^*}) = \textnormal{sgn}(\gamma\rho)\partial TV_w(\bar{H}_{x^*}) = \textnormal{sgn}(\rho)\partial TV_w(\bar{H}_{x^*}).
\end{equation*}
Thus, by inserting the expression for $f_\alpha$ into the optimality condition \eqref{eq:FONC}, and multiplying both sides by $\textnormal{sgn}(\rho)$, the problem can be reformulated to proving that there exists $$p \in \partial TV_w(\bar{H}_{x^*})$$ such that

\begin{equation}\label{eq:findGrad}
    K^*K\left((1-\gamma)|\rho|\bar{H}_{x^*}+\textnormal{sgn}(\rho)(\tau-\eta)\right) = \alpha p.
\end{equation}
Our strategy for verifying the existence of such an element $p$ in the subdifferential $\partial TV_w(\bar{H}_{x^*})$ will be to initially define $z$ (up to a constant) by 
\begin{equation*}
      -\alpha z' =K^*K\left((1-\gamma)|\rho|\bar{H}_{x^*}+\textnormal{sgn}(\rho)(\tau-\eta)\right), 
\end{equation*}
and thereafter show, for a specific choice of the free constant, that all the assumptions in Lemma \eqref{lem:subgrad} are satisfied by $p = -z'$, which will yield the desired result.

Since the definition of $z$ depends on the parameters $\gamma$ and $\eta$,  we need to determine these quantities. Let us make use of Lemma \ref{lem:KtoC} to obtain the expression
\begin{eqnarray}
  -\alpha z' &=&K^*K\left((1-\gamma)|\rho|\bar{H}_{x^*}+\textnormal{sgn}(\rho)(\tau-\eta)\right) \nonumber \\ &=& (1-\gamma)|\rho| C^*C\bar{H}_{x^*} \nonumber \\ &+& \frac{((1-\gamma)|\rho| K\bar{H}_{x^*} + \textnormal{sgn}(\rho)(\tau-\eta)K1, K1)}{\|K1\|^2}K^*K1 \nonumber \\ 
  &=& (1-\gamma)|\rho| C^*C\bar{H}_{x^*} \nonumber \\  &+& \left(\frac{(1-\gamma)|\rho|(K\bar{H}_{x^*},K1)}{\|K1\|^2} + \textnormal{sgn}(\rho)(\tau-\eta)\right)K^*K1. \label{eq:fullexpr}
\end{eqnarray}
Recall that Lemma \ref{lem:subgrad} asserts that $z'$ must have zero mean. Since $C$ annihilates constants, the observation that
\begin{equation*}
    \int_0^1 (C^*C\bar{H}_{x^*})(x)dx = (C^*C\bar{H}_{x^*},1) = (C\bar{H}_{x^*},C1) = 0,
\end{equation*}
therefore motivates the removal of the last term in \eqref{eq:fullexpr}. Consequently, we must choose 
\begin{equation*}
  \eta = \tau + \frac{(1-\gamma)\rho(K\bar{H}_{x^*},K1)}{\|K1\|^2}.
\end{equation*}
Then \eqref{eq:fullexpr} simplifies to
\begin{equation} \label{eq:eta_found}
    -\alpha z' =(1-\gamma)|\rho| C^*C\bar{H}_{x^*},
\end{equation}
which guarantees that $z'$ has zero mean. 

The choice of $\gamma$ is somewhat more involved, but observe that by multiplying \eqref{eq:eta_found} with $\bar{H}_{x^*}$ and integrating, we get 
\begin{eqnarray}
    \nonumber
    \int_0^1 -z'(x)\bar{H}_{x^*}(x)dx 
    &=& \frac{(1-\gamma)|\rho|}{\alpha}\int_0^1C^*C\bar{H}_{x^*}(x)\bar{H}_{x^*}(x)dx \\
    \nonumber
    &=& \frac{(1-\gamma)|\rho|}{\alpha}(C^*C\bar{H}_{x^*},\bar{H}_{x^*}) \\
    \nonumber
    &=& \frac{(1-\gamma)|\rho|}{\alpha}\|C\bar{H}_{x^*}\|^2 \\ 
    \nonumber
    &=& (1-\gamma)\frac{|\rho|\|C\bar{H}_{x^*}\|}{\alpha}\|C\bar{H}_{x^*}\| \\
    \label{eq:only_integration_by-parts_missing}
    &=& \|C\bar{H}_{x^*}\| := TV_w(\bar{H}_{x^*}),
\end{eqnarray}
when
\begin{equation} \label{eq:expression_for_gamma}
    \gamma = 1 - \frac{\alpha}{|\rho|\|C\bar{H}_{x^*}\|}.
\end{equation}
Furthermore, provided that $z$ has zero trace, i.e., $Tz=0$,  
\begin{equation}
   \int_0^1 -z'(x)\bar{H}_{x^*}(x)dx = \int_0^1 z(x)D\bar{H}_{x^*}(x). \label{eq:measNorm}
\end{equation}
Combining this with \eqref{eq:only_integration_by-parts_missing} would imply that assumption (ii) of Lemma \ref{lem:subgrad} is satisfied. However, we must make sure that $Tz=0$. We now address this issue.  

Since $z'$ has zero mean, we find that 
\begin{equation*}
  \int_0^1 z'(x)dx = z(1) - z(0) = 0,
\end{equation*}
which implies that $z(0) = z(1)$. This point-wise evaluation is justified because Sobolev's embedding theorem asserts that $z \in C([0,1])$ provided that $-z'=p \in L^2(0,1)$. Recall that $z$ is (until now) only defined up to a constant, and we are therefore free to choose this constant such that $z(0) = z(1) = 0$. Thus, the zero trace condition (iii) in Lemma \ref{lem:subgrad} is satisfied and as a consequence of the discussion above, so is condition (ii).

Finally, it remains to show that $|z(x)| \leq w(x) \ \forall x \in (0,1)$, i.e., to verify that requirement (i) in Lemma \ref{lem:subgrad} holds. Since $D\bar{H}_{x^*}$ is the Dirac measure at $x^*$, we can write, keeping in mind that $z$ has zero trace, 
\begin{eqnarray*}
    |z(x)| &=& \left| \int_0^1 z(y)D\bar{H}_{x}(y) \right| \\ 
    &=& \left| - \int_0^1 z'(y) \bar{H}_{x}(y)dy \right|.  
\end{eqnarray*}
Invoking \eqref{eq:eta_found} and the expression \eqref{eq:expression_for_gamma} for $\gamma$ now yield that
\begin{eqnarray*}
    |z(x)| &=& \left| \frac{1}{\|C\bar{H}_{x^*}\|}\int_0^1 (C^*C\bar{H}_{x^*})(y)\bar{H}_{x}(y)dy \right| \\ 
    &=& \frac{1}{\|C\bar{H}_{x^*}\|}\left|\left(C^*C\bar{H}_{x^*},\bar{H}_{x}\right)\right| \\
    &=& \frac{1}{\|C\bar{H}_{x^*}\|}\left|\left(C\bar{H}_{x^*},C\bar{H}_{x}\right)\right| \\
    &\leq& \|C\bar{H}_x\| := w(x),
\end{eqnarray*}
where we have employed the Cauchy-Schwarz inequality. 

Hence, all the requirements needed in Lemma \ref{lem:subgrad} are satisfied, and we can conclude that $p = -z' \in \partial TV_w(\bar{H}_{x^*})$. Combining this fact with the optimality condition \eqref{eq:findGrad} completes the proof.
\end{proof}

\end{document}